\setlist[enumerate,1]{label=(\arabic*)} % to make basic enumerate with 1) instead of 1.
\newcommand{\EndA}[1][\TXn]{\ensuremath{\mathrm{End}(#1)}\xspace}
\newcommand{\End}{\ensuremath{\mathrm{End}}\xspace}
\newcommand{\AutA}[1][\TXn]{\ensuremath{\mathrm{Aut}(#1)}\xspace}
\newcommand{\TXn}[1][n]{\mathcal{T}_{#1}\xspace}
\newcommand{\Sn}[1][n]{\mathcal{S}_{#1}\xspace}
\newcommand{\An}[1][n]{\mathcal{A}_{#1}\xspace}
\newcommand{\Klein}{\mathcal{K}\xspace}
\newcommand{\EndTn}[1][n]{\mathcal{E}_{#1}\xspace}
\newcommand{\AutTn}[1][n]{\mathcal{G}_{#1}\xspace}
\newcommand{\singn}{\EndTn\priv \AutTn}
\newcommand{\pte}[1][t,e]{\phi_{#1}}
\newcommand{\ptce}{\pte[t^2, e]}
\newcommand{\puf}{\pte[u,f]}
\newcommand{\phid}[1][\id]{\pte[\id,#1]}
\newcommand{\pee}[1][e]{\pte[#1,#1]}
\newcommand{\ptctc}[1][t]{\phi_{ {#1}^2, {#1}^2 }}
\newcommand{\psid}{\varepsilon}
\newcommand{\Fxe}[1][e]{\mathrm{Fix}(#1)\xspace}
\newcommand{\Fxte}{\Fxe[t,e]}
\newcommand{\perte}[1][t,e]{\ensuremath{(#1)}\xspace}
\newcommand{\set}[1]{\ensuremath{\left\lbrace #1 \right\rbrace}\xspace}
\newcommand{\pmap}[1]{\begin{pmatrix} #1 \end{pmatrix}}
\theoremstyle{plain} % the style for theorem, propositions and lemmas
\newtheorem{thm}{Theorem}[section] % name shortcut for Theorem & counter check
\newtheorem{prop}[thm]{Proposition} 
\newtheorem{proposition}[thm]{Proposition}
\newtheorem{lem}[thm]{Lemma}
\newtheorem{lemma}[thm]{Lemma}
\newtheorem{cor}[thm]{Corollary}
\newtheorem{corollary}[thm]{Corollary}
\newcounter{scount}[subsection] % counter to only go down to subsection but make it different
\theoremstyle{definition} % the style for definitions
\newtheorem{defn}[thm]{Definition} % shortname
\newtheorem{definition}[thm]{Definition} % shortname
\theoremstyle{remark} % the style for remarks and examples
\newtheorem{rem}[thm]{Remark} % shortname
\newtheorem{remark}[thm]{Remark} % longname
\newtheorem*{rem*}{Remark} % starred version
\newtheorem*{notation*}{Notation}
\newtheorem{example}[thm]{Example}
\newcommand{\GreenL}{\mathscr{L}}
\newcommand{\GreenR}{\mathscr{R}}
\newcommand{\GreenD}{\mathscr{D}}
\newcommand{\GreenJ}{\mathscr{J}}
\newcommand{\GreenH}{\mathscr{H}}
\newcommand{\GL}[1][]{\ensuremath{\mathop{\GreenL_{#1}}}\xspace}
\newcommand{\GR}[1][]{\ensuremath{\mathop{\GreenR_{#1}}}\xspace}
\newcommand{\GD}[1][]{\ensuremath{\mathop{\GreenD_{#1}}}\xspace}
\newcommand{\GH}[1][]{\ensuremath{\mathop{\GreenH_{#1}}}\xspace}
\newcommand{\GJ}[1][]{\ensuremath{\mathop{\GreenJ_{#1}}}\xspace}
\newcommand{\GLs}[1][]{\ensuremath{\mathop{\GreenL^*_{#1}}}\xspace}
\newcommand{\GRs}[1][]{\ensuremath{\mathop{\GreenR^*_{#1}}}\xspace}
\newcommand{\GDs}[1][]{\ensuremath{\mathop{\GreenD^*_{#1}}}\xspace}
\newcommand{\GJs}[1][]{\ensuremath{\mathop{\GreenJ^*_{#1}}}\xspace}
\newcommand{\GLt}[1][]{\ensuremath{\mathop{\widetilde{\GreenL}_{#1}}}\xspace}
\newcommand{\GRt}[1][]{\ensuremath{\mathop{\widetilde{\GreenR}_{#1}}}\xspace}
\newcommand{\GDt}[1][]{\ensuremath{\mathop{\widetilde{\GreenD}_{#1}}}\xspace}
\newcommand{\GJt}[1][]{\ensuremath{\mathop{\widetilde{\GreenJ}_{#1}}}\xspace}
\newcommand{\GHs}{\ensuremath{\mathop{\GreenH^*}}\xspace}
\newcommand{\GHt}{\ensuremath{\mathop{\widetilde{\GreenH}}}\xspace}
\newcommand{\GLtcGRt}{\mathop{\widetilde{\GreenL}\!\circ\!\widetilde{\GreenR}}}
\newcommand{\im}{\mathrm{im\,}}
\newcommand{\id}{\mathrm{id}}
\newcommand{\Card}[1]{\lvert #1 \rvert}
\newcommand{\priv}{\setminus}
\newcommand{\ssi}{\Leftrightarrow}
\def\mathcolor#1#{\@mathcolor{#1}}
\def\@mathcolor#1#2#3{%
  \protect\leavevmode
  \begingroup\color#1{#2}#3\endgroup
}
\definecolor{cadetblue}{rgb}{0.37, 0.62, 0.63} % ocean blue
\definecolor{celadon}{rgb}{0.67, 0.88, 0.69} % light green
\definecolor{arylide}{rgb}{0.91, 0.84, 0.42} % mustardy yellow
\providecommand\@dotsep{5}
\title[The structure of End($\TXn$)]{The structure of END(\boldmath{$\TXn$})}
\author[V. Gould]{Victoria Gould}
\address{University of York}
\email{victoria.gould@york.ac.uk}
\author[A. Grau]{Ambroise Grau}
\address{University of York}
\email{ambroise.grau@york.ac.uk}
\author[M. Johnson]{Marianne Johnson}
\address{University of Manchester}
\email{Marianne.Johnson@manchester.ac.uk}
\begin{document}

\keywords{Full transformation semigroup; endomorphisms; ideals; presentations}

\subjclass[2020]{Primary: 20M05, 20M10, 20M20}
\maketitle

\begin{abstract}
The full transformation  semigroups $\TXn$, where $n\in \mathbb{N}$,  consisting of all maps from a set of cardinality $n$ to itself, are arguably the most important family of finite semigroups.  This article investigates the endomorphism monoid $\EndA$ of $\TXn$.  The determination
of the  {\em elements} of $\EndA$ is due  Schein and
Teclezghi.  Surprisingly, the {\em algebraic structure} of $\EndA$ has not been further
explored.  We describe Green’s relations and extended Green's relations on $\EndA$, and the generalised regularity properties of these monoids. In particular, we prove that $\GH=\GL \subseteq \GR=\GD=\GJ$ (with equality if and only if $n=1$); the idempotents of $\EndA$ form a band (which is equal to $\EndA$ if and only if $n=1$) and  also the regular elements of $\EndA$ form a subsemigroup (which is equal to $\EndA$ if and only if $n\leq 2$). Further, the regular elements of $\EndA$ are precisely the idempotents together with all endomorphisms of rank greater than $3$. We also provide a presentation for $\EndA$ with respect to a minimal generating set.
\end{abstract}

\section{Introduction}
\label{sec:intro}
The full transformation semigroup $\TXn$ on a finite set $\{1, \ldots, n\}$ is an important object in algebra.  It is therefore natural to study its endomorphism monoid $\EndA[\TXn]$.  The elements of $\EndA[\TXn]$ were described by Schein and Teclezghi~\cite{ST98}, and we use this description as the starting point for our investigation of the algebraic structure and properties of $\EndA[\TXn]$.  Our initial task is to partition $\EndA[\TXn]$ in several ways, depending on the {\em rank} of an element (the cardinality of its image) and  {\em type} of an element (a quality of the image that determines the behaviour of 
the element in products).  This partitioning is then made use of throughout the text.

One of the first questions to ask about a monoid $S$ is whether it is {\em regular}, that is, for any $a\in S$ there is a $b\in S$ such that
$a=aba$. Regularity is intimately connected with the position and nature of the {\em idempotents} in the monoid, where
$e\in S$ is {\em idempotent} if $e=e^2$.  We show that for $n\geq 3$ the monoid $\EndA[\TXn]$ is not regular. On the other hand the set of idempotents 
of $\EndA[\TXn]$ is very special in that it is a {\em band}, that is, a semigroup of idempotents. Moreover it is a special kind of band, namely a 
{\em left regular band}. 

We  next investigate minimal generating sets of $\EndA[\TXn]$, and provide a presentation for $\EndA[\TXn]$ in terms of these generators.

A major tool used in understanding monoids is that of {\em Green's relations} $\GR,\GL, \GH,\GD$ and $\GJ$, which measure mutual divisibility properties between elements by considering the principal (one-sided) ideals that they generate. Specifically, for elements $a,b$ of a monoid $S$ we have
$a\GR b$ if and only if $aS=bS$; this is equivalent to $a=bs$ and $b=at$ for some $s,t\in S$. The relation $\GL$ is defined dually, and
$\GJ$ is defined by $a\GJ b$ if and only if $SaS=SbS$. Finally, 
$\GH=\GR\wedge \GL=\GR\cap \GL$ and $\GD=\GR\vee \GL=\GR\circ \GL$ with $\GD=\GJ$ for a finite semigroup; the latter two assertions are  theorems -  see, for example, \cite{howie:1995}.  We characterise Green's relations on $\EndA[\TXn]$.  In particular, we show that $\GH=\GL \subseteq \GR=\GD=\GJ$, and for $n\neq 4$ the relation $\GL$ is trivial outside of the group of units. We  then use the description of $\GJ$   to fully determine the  ideals of $\EndA$.

Green's relations provide an alternative characterisation of regularity, specifically, a  monoid is regular if every $\GR$-class (or, equivalently,  every $\GL$-class) contains an idempotent. 
We have commented that $\EndA[\TXn]$ is not regular, however, we show that  it satisfies the weaker property of being left abundant, where a monoid is {\em left abundant}  if every class of the extended Green's relation $\GRs$ contains an idempotent. 
This follows from the fact that the relation $\GRs$ on $\EndTn$ precisely captures the relation of having the same rank. We postpone giving the definitions of the extended Green's relations $\GRs, \GLs,\GHs,\GDs$ and $\GJs$ until later in the text, but suffice to say here they are relations of mutual cancellativity. They first arose in the work of Pastijn \cite{pastijn:1975}
and McAlister \cite{mcalister:1976}. As observed in \cite{fountain:1977}, it follows from \cite{kilp:1973} that a monoid is left abundant precisely when   every monogenic right act is projective.  We determine the {\em $*$-relations} $\GRs, \GLs,\GHs,\GDs$ and $\GJs$ on $\EndA[\TXn]$, from which we observe that 
$\EndA[\TXn]$ does not satisfy the dual property of being right abundant. To complete the picture of the extended Green's relations we consider the {\em $\sim$-relations}
 $\GRt, \GLt,\GHt,\GDt$ and $\GJt$ which extend the corresponding $*$-relations. They are formulated using idempotent left identities: again, we postpone their definition until later. The $\sim$-relations 
 first appear in \cite{elqallali:1980} and have subsequently proved to be useful in a number of ways. For example, they are inherent in the characterisation of left restriction semigroups \cite{hollings:2009}  and in determining varieties containing quasi-varieties of 
 abundant semigroups.

The main focus of our work will be on the general behaviour of $\EndA[\TXn]$ which emerges for $n \geq 5$. The cases $n \leq 4$ exhibit several degenerate or exceptional behaviours: for instance, it is immediate that $\EndA[\mathcal{T}_1] = \AutA[\mathcal{T}_1]$ is the trivial group, and 
we will see that $\EndA[\mathcal{T}_4]$ is unique in that it contains endomorphisms of rank $7$. 

 The structure of the paper is as follows. In Section \ref{sec:lowrank} we remind the reader of the description of the elements of $\EndTn$ from \cite{ST98}.
 We   prove several fundamental results concerning the non-automorphisms in $\EndA[\TXn]$ and give the partitions promised above.
Sections \ref{sec:idempotent}--\ref{sec:extended Greens}  consider the structure of $\EndA[\TXn]$ for $n \geq 5$:   in Section~\ref{sec:idempotent} we describe some properties of the idempotents and determine the regular elements; in Section \ref{sec:gens} we provide a minimal generating set and describe a presentation
for $\EndA[\TXn]$ in terms of these generators;  in  Section \ref{sec:Green} we give a description of Green's relations and use this to determine the ideal structure of
$\EndA[\TXn]$; we consider the extended Green's relations on $\EndA[\TXn]$ in  Section \ref{sec:extended Greens}.  To complete the picture, in Section \ref{sec:degenerate}, we  analyse the structure of $\EndA[\TXn]$ for $n \leq 4$. Finally, in Section~\ref{sec:related}, we indicate how the ideas in this article could be developed and extended.

\textbf{Notation and conventions:} Throughout the paper we write $\Sn\subseteq \TXn$ to denote the symmetric group and $\An\subseteq \Sn$ the alternating group, that is, the subgroup of $\Sn$ of all even permutations. By a slight abuse of notation, we suppress the dependence on $n$ and write simply $\id$ to denote the identity element of $\TXn$. For an element $g\in \Sn$, and $s\in\TXn$, we denote by $s^g$ the product $g^{-1}sg$. For $1\leq i \neq j \leq n$ we also write $(i \,j)$ for the transposition swapping $i$ and $j$ in $\Sn$, and $c_i$ for the constant map with image $i$ in $\TXn$. For $n \neq 4$, the alternating group is the only non-trivial proper normal subgroup of $\Sn$, whilst for $n=4$ there is one additional normal subgroup $\Klein = \{\id, (1\,2)(3\,4), (1\,3)(2\,4), (1\,4)(2\,3)\}$. A straightforward calculation reveals that for any $t \in \Sn[4]$ there is a unique element of $\Klein t$ which fixes $4$. 
 To facilitate readability, we  take the convention that elements of $\TXn$ will be written using Roman letters, while endomorphisms of $\TXn$ will be written using Greek letters. We will also use the short-hand notation $\EndTn=\EndA[\TXn]$ and $\AutTn=\mathrm{Aut}(\TXn)$ for the endomorphism monoid and automorphism group of $\TXn$. The identity element of $\EndTn$ (and hence also $\AutTn$) is the trivial automorphism, which will be denoted by  $\psid$ (again, suppressing the dependence on $n$ where convenient).

We attempt to keep the exposition as self-contained as possible, but refer the reader to \cite{howie:1995} for further details of semigroup notions we employ.

\section{Singular endomorphisms}
\label{sec:lowrank}

In this section we record several useful properties of the endomorphisms in  $\EndTn \priv \AutTn$ of $\EndA[\TXn]$, which we refer to as {\em singular endomorphisms}. We begin by recalling the characterisation of the   semigroup endomorphisms of $\TXn$ (that is, endomorphisms preserving the binary operation, but not necessarily the identity)  due to  Schein and Teclezghi~\cite{ST98}.

\begin{thm}[\cite{ST98}]\label{thm:endomorphisms}
Let $g\in \Sn$ and $t,e\in \TXn$ and define maps $\psi_g$ and $\pte$ for any $s\in \TXn$ by:
$$s\psi_g = s^g \quad \text{and} \quad 
s\phi_{t,e} = \begin{cases} t &\mbox{if }s\in \Sn\priv \An, \\ t^2 &\mbox{if }s\in \An, \\ e &\mbox{if }s\in \TXn\priv \Sn. \end{cases}$$
Then the automorphisms and endomorphisms of $\TXn$ are described as follows.
\begin{enumerate}[left=10pt]
\item $\AutTn= \{\psi_g: g \in \Sn\}$.
\item For $n \neq 4$, 
$$\EndTn= \AutTn \cup \{\pte: t,e \in \TXn, \, t^3=t,\;  te=et=e^2=e\}.$$
\item For $n=4$
$$\EndTn[4] = \AutTn[4] \cup \{\pte: t,e \in \TXn[4], \, t^3=t,\;  te=et=e^2=e\} \cup \{\sigma^g: g \in \Sn[4]\},$$ 
where $s\sigma = c_4$ if $s\in \TXn[4]\priv \Sn[4]$ and $s\sigma = p_s$ if $s\in \Sn[4]$, where $p_s$ denotes the unique element in the coset of $\Klein s$ 
which fixes $4$, and for all $s \in \TXn[4]$ and all $g\in \mathcal{S}_4$ $s\sigma^g = (s\sigma)^g$.
\end{enumerate}
\end{thm}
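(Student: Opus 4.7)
My plan is to reduce the classification to two pieces of data: the restriction $\phi|_{\Sn}$ of an arbitrary endomorphism $\phi$, and the image of a single well-chosen singular element. The starting point is the standard fact that $\TXn$ is generated by $\Sn$ together with any idempotent $e$ of rank $n-1$, so that $\phi$ is entirely determined by the pair $(\phi|_{\Sn}, \phi(e))$.

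I would first analyse $\phi|_{\Sn}$. Setting $f = \phi(\id)$ (an idempotent of $\TXn$), a short argument shows that the image $\phi(\Sn)$ is contained in the $\GH$-class of $f$, which is the maximal subgroup at $f$: for any $g \in \Sn$ we have $\phi(g)\phi(g^{-1}) = f = \phi(g^{-1})\phi(g)$, whence $\phi(g) \GR f$ and $\phi(g) \GL f$. Thus $\phi|_{\Sn}$ factors as a group homomorphism from $\Sn$ to this maximal subgroup, so its kernel $N$ is a normal subgroup of $\Sn$. For $n \neq 4$ the possibilities are $N \in \{\{\id\}, \An, \Sn\}$, while for $n = 4$ there is additionally the Klein four group $\Klein$.

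A case analysis then delivers the three (or four) families. When $N = \{\id\}$, the map $\phi|_{\Sn}$ is injective, hence an isomorphism onto its image; for $n \neq 6$ every automorphism of $\Sn$ is inner, forcing $\phi|_{\Sn} = \psi_g|_{\Sn}$ for some $g$, after which one may replace $\phi$ by $\psi_{g^{-1}} \circ \phi$ and reduce to showing that an endomorphism fixing $\Sn$ pointwise must be the identity; this follows from exploiting relations such as $he = e = eh$ for $h$ in the pointwise stabiliser of the non-trivial fibre of $e$, which pin down $\phi(e)=e$. When $N = \An$, the image is a two-element group $\{t, t^2\}$ satisfying $t^3 = t$, and the defining relations in $\TXn$ force the image of any singular element to share a common value $e$ satisfying $te = et = e^2 = e$, yielding $\phi = \pte$. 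When $N = \Sn$, the group collapses to a single idempotent and $\phi = \pte$ arises in the degenerate form with $t = t^2$. One must also check conversely that each prescription defines a bona fide semigroup endomorphism, by direct verification of multiplicativity on products $st$ with $s, t$ ranging over $\An$, $\Sn \priv \An$, and $\TXn \priv \Sn$.

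The main obstacle is the anomalous behaviour at $n = 4$: the extra normal subgroup $\Klein$ produces the $\sigma^g$ family, and one must verify both that the prescription using the unique representative of each $\Klein$-coset fixing $4$ genuinely defines a semigroup endomorphism, and that conversely every $\phi$ with $\ker(\phi|_{\Sn[4]}) = \Klein$ arises in this way up to a $\psi_g$-twist. A secondary subtlety occurs at $n = 6$, where one must rule out the exceptional outer automorphism of $\mathcal{S}_6$ as a possible $\phi|_{\Sn}$: this is accomplished by showing that the image of a transposition must remain a transposition (rather than the product of three transpositions that the outer automorphism would require) through compatibility with $\phi(e)$ for a suitable rank $5$ idempotent.
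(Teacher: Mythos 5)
The paper itself contains no proof of this theorem: it is imported verbatim from Schein and Teclezghi \cite{ST98}, so your attempt can only be measured against the classical argument there. Your architecture --- factor $\phi|_{\Sn}$ through the maximal subgroup at the idempotent $\id\phi$, classify by the normal subgroup $N=\ker(\phi|_{\Sn})\in\{\{\id\},\An,\Sn\}$ (plus $\Klein$ when $n=4$), then verify each candidate family directly --- is indeed the standard route, and the shape of your case analysis is right.

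Two of your steps, however, are concretely wrong or gapped. First, injectivity of $\phi|_{\Sn}$ does not immediately give an automorphism of $\Sn$: a priori $\phi(\Sn)$ sits inside the maximal subgroup at an idempotent $f$ of rank $r$, which is isomorphic to $\mathcal{S}_r$, and you must first note that $\Sn\hookrightarrow\mathcal{S}_r$ forces $r=n$, hence $f=\id$ and $\phi(\Sn)=\Sn$. The same issue recurs, unaddressed in your sketch, at $n=4$ with $N=\Klein$: there $\Sn[4]/\Klein\cong\mathcal{S}_3$ also embeds into the maximal subgroup at a rank-$3$ idempotent, and this possibility must be excluded before the $\sigma^g$ family can be isolated. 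Second, your pinning-down relations are false as stated: for $e$ with fibre $\{1,2\}\to 2$ and $h\neq\id$ fixing $1$ and $2$ one has $he\neq e\neq eh$; what is true is $he=eh$ together with $(1\,2)e=e$, and these only narrow $\phi(e)$ to $\{e,\,e^{(1\,2)},\,c_1,\,c_2\}$. You then need further identities, e.g.\ $e(2\,3)e=e(2\,3)$ to eliminate $e^{(1\,2)}$, and commutation with a disjoint conjugate such as $e^{(1\,3)(2\,4)}$ to eliminate the constants.

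A cleaner classical device repairs both this and your under-argued $n=6$ case at once: since $sc_i=c_i$ for all $s$ and $\phi(\Sn)=\Sn$, each $c_i\phi$ is a constant $c_{iJ}$, and $c_ih=c_{ih}$ makes $J$ equivariant, so $\theta(\mathrm{Stab}(i))$ must fix $iJ$; an outer automorphism of $\Sn[6]$ sends point stabilisers to \emph{transitive} copies of $\mathcal{S}_5$, a contradiction (your plan only shows ${\rm rank}(e\phi)\leq 3$ and stops short of one), and $c_is=c_{is}$ then yields $\phi=\psi_g$ outright. Finally, the collapse of the entire singular part to one value when $N\supseteq\An$ --- the actual crux --- is asserted, not argued; the standard proof invokes Howie's theorem that $\TXn\priv\Sn$ is generated by the rank-$(n-1)$ idempotents, which for $n\geq 4$ are all conjugate under \emph{even} permutations (their conjugation stabilisers contain a transposition), so they share a single image whose powers absorb every singular element.
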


We remark that in the case $n=4$ in Theorem~\ref{thm:endomorphisms} there is no bias towards 4; any apparent bias is dealt with by the process of conjugation. Note also that $\sigma=\sigma^{\id}$ and 
$\psi_{\id}=\psid$.

It is easy to see that for any $g,h\in \mathcal{S}_n$ we have that $\psi_g\psi_h = \psi_{gh}$. Further if $\psi_g = \psi_h$ then an easy computation gives that for any $i\in \{ 1,\ldots, n\}$ we have that
$c_{ig}=c_i\psi_g=c_i\psi_h=c_{ih}$, so that $ig=ih$ and, since $i$ was arbitrary, $g=h$. It follows that $\AutTn$ is isomorphic to the symmetric group $\Sn$. For $\alpha \in \EndTn$ we write $\im\alpha$ to denote the image of $\alpha$, and define the {\em rank} of $\alpha$ to be the cardinality of $\im\alpha$. 
Clearly the image of each automorphism $\psi_g$ is the whole of $\TXn$, and hence has rank $\Card{\TXn}=n^n$. The remaining endomorphisms, that is, the singular endomorphisms,  have rank strictly less than $n^n$. From the above theorem the rank of each singular endomorphism is either $1, 2,3,$ or $7$. 
For, the image of each endomorphism of the form $\pte$ is the set $\set{t,t^2,e}$, which can have up to three distinct elements. In the case where $n=4$ an easy computation gives that 
$\im\sigma^g = \{t^g: t \in \Sn[4], 4t=4\} \cup \{c_{4g}\},$ has precisely $7$ elements.

\subsection{Endomorphisms of rank at most 3}
In order to encapsulate the conditions of the elements $t$ and $e$ so that $\pte$ is an endomorphism, we let
$$U_n =\set{t\in\TXn \mid t^3 = t \text{ with } te=et=e^2=e \text{ for some } e\in\TXn}.$$

We say that $\perte$ form a \emph{permissible pair}, if $\pte\in \EndTn$, and we denote by $P_n$ the set of all permissible pairs, that is,
$$ P_n = \set{(t,e)\mid t\in U_n, \, te = et = e^2 = e}.$$

Before considering the endomorphisms in $\EndTn$ further, we give some important properties of the sets $U_n$ and $P_n$.

\begin{lemma}
\label{lem:counting}
The set $U_n$ consists of all elements $t=t^3$ satisfying $kt=k$ for at least one $1 \leq k \leq n$. Moreover, for $t \in U_n$ the number of permissible pairs with first component equal to $t$ is $$\sum_{r=1}^{|J|} \binom{|J|}{r}r^{|I|+|J|-r}$$
where $J = \{k: kt=k\}$ and $I$ is maximal such that $t$ restricts to a fixed point free permutation on $I \cup It$.
\end{lemma}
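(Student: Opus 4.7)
For the first claim I would prove both inclusions. Suppose $t \in U_n$ with witness $e$ satisfying $te = et = e^2 = e$, and pick any $k \in \im e$; since $e^2 = e$ we have $ke = k$, and then $et = e$ evaluated at $k$ gives $k = ke = (ke)t = kt$, so $t$ fixes $k$. Conversely, if $t^3 = t$ and some $k$ is fixed by $t$, a direct check shows the constant map $c_k$ satisfies all three defining identities, so $(t, c_k)$ is a permissible pair.

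For the counting, I would begin by unpacking the structure of $t \in U_n$. The equation $t^3 = t$ forces each element to reach, in at most one step, a periodic point whose $t$-cycle has length dividing $2$. Consequently the periodic points of $t$ split as the disjoint union of the fixed-point set $J$ and the elements of $m$ disjoint two-cycles, and each non-periodic (``tail'') element is sent by $t$ into this core. If $I$ is a transversal of the two-cycles, then $I$ and $It$ are disjoint, $I \cup It$ consists of all two-cycle elements, and $|I| = m$. I would also observe that tails lie outside $\im t$: if $y = xt$ then $yt^2 = xt^3 = xt = y$, forcing $y$ to be periodic.

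Next I would translate the permissibility conditions on $e$. The equation $e^2 = e$ says $e$ fixes the elements of $K := \im e$, while $et = e$ forces $K \subseteq J$. The remaining equation $te = e$, read as $e(t(x)) = e(x)$, says $e$ is constant on each weakly connected component of the functional graph of $t$. These components correspond bijectively to the periodic cycles of $t$: one around each fixed point $p$ (consisting of $p$ together with the tails mapping to $p$) and one around each two-cycle (consisting of its two elements and the tails mapping to them), giving $|J| + m$ components in total.

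The count is then a routine combinatorial computation. Choose $K \subseteq J$ of size $r$ in $\binom{|J|}{r}$ ways. On each component containing a fixed point $p \in K$ the value of $e$ is forced to be $p$, while on each of the $|J| - r$ components around a fixed point outside $K$ and on each of the $m$ components around a two-cycle the constant value is a free choice in $K$. This gives $r^{(|J|-r)+m}$ possibilities for each $K$, and summing over $r \geq 1$ (noting $\im e$ is non-empty) yields $\sum_{r=1}^{|J|} \binom{|J|}{r} r^{|I|+|J|-r}$. The one point that needs care is recognising that tails contribute no additional factors: they belong to the component of their $t$-image, and since $K \subseteq J$ excludes tails, no extra values are forced.
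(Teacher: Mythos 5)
Your proof is correct, and at the level of the count it parametrises exactly the same data as the paper: a non-empty subset $R \subseteq J$ of size $r$ together with free values in $R$ at the $|J|-r$ remaining fixed points and at the $|I|$ two-cycles, yielding $\sum_{r=1}^{|J|}\binom{|J|}{r}r^{|I|+|J|-r}$; your first part (every element of $\im e$ is fixed by $t$; conversely take $e = c_k$) coincides with the paper's argument essentially verbatim. Where you genuinely diverge is in how the forced values at the non-core points are justified. The paper partitions the domain into four sets $J$, $K$ (tails into fixed points), $L = I \cup It$ (two-cycle points) and $M$ (tails into two-cycles), and then proves by an element-by-element case analysis over $K$, $It$ and $M$ that each function $f\colon (J\setminus R)\cup I \to R$ extends \emph{uniquely} to an idempotent $e$ with $te=et=e$. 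You instead isolate the clean characterisation that $te=e$ holds if and only if $e$ is constant on the weakly connected components of the functional graph of $t$, note that under $t^3=t$ these components biject with the cycles ($|J|$ around fixed points, $|I|$ around two-cycles), and combine this with $et=e$ forcing $\im e \subseteq J$ and $e^2=e$ forcing $e$ to fix its image; the count then falls out without any casework, and your observation that non-periodic points lie outside $\im t$ keeps the component structure transparent. Your component-constancy lemma thus replaces the paper's three-case extension argument in one stroke and makes it obvious why tails contribute no extra factors; what the paper's more hands-on route buys is the explicit two-row normal forms for $t$ and $e$, which it exploits to display concrete examples (such as the unique idempotent paired with a $t$ having a single fixed point). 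Both arguments establish the same bijection and the same formula.
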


\begin{proof}
Let $t \in U_n$. By definition we have $t^3=t$. Note that if there exists $e \in \TXn$ such that $et=e$, then then for all $k \in \{1, \ldots, n\}$ we must have $ket = ke$, giving that all elements in the image of $e$ are fixed by $t$. 
Conversely, suppose that $t^3=t$  and $kt=k$. Then it is easy to see that $c_kt=tc_k=c_k=c_k^2$, and hence $t \in U_n$.

Note that if $t \in \TXn$ satisfies $t^3=t$ then for all $k \in \{1, \ldots, n\}$ there exist $i_k, j_k$ such that $kt=j_k$, $kt^2=j_kt=i_k$ and $kt^3=i_kt=j_kt^2=kt$. 
If $j_k=k$ then also $i_k=k$. It follows that  $J = \{k: i_k=j_k=k\}$, $K = \{k: k \neq j_k =i_k\}$, $L= \{k: k=i_k \neq j_k\}$ and $M = \{k: k \neq j_k \neq i_k \neq k\}$,  partition the domain 
$\{ 1,\cdots, n\}$ of $t$.   It is clear from these definitions that $t$ restricts to the identity on $J$, $Kt \subseteq J$ and $Mt \subseteq L$. Moreover, $L$ is maximal such that $t$ restricts to a fixed-point free permutation of order $2$ on $L$; let us partition $L = I \cup It$. 

The general picture to have in mind  is as follows. For each transformation $t \in \TXn$ satisfying $t^3=t$, we may partition the domain of $t$ as:
\[
\begin{array}{rclc}
\{1, \ldots, n\} &=& J \cup K \cup L \cup M \mbox{ where }& \\
J &=& \{j: jt=jt^2=j\} & \begin{minipage}{4cm}\centering\begin{tikzpicture}
% vertices
\node (a) at  (0,0) {$j$};
%edges
\path (a) edge [loop right] node {} (a);
\end{tikzpicture}\end{minipage}\\[.3ex]
K &=& \{k: kt= kt^2 \neq k\}& \begin{minipage}{4cm}\centering\begin{tikzpicture}
% vertices
\node (a) at  (0,0) {$k$};
\node (b) at  (1.5,0) {$kt$};
\path[->] (a) edge node {} (b);
\path (b) edge [loop right] node {} (b);
\end{tikzpicture}\end{minipage}\\
L &=& \{l: lt\neq lt^2=l \}&  \begin{minipage}{4cm}\centering\begin{tikzpicture}
% vertices
\node (a) at  (0,0) {$l=lt^2$};
\node (b) at  (2,0) {$lt\phantom{t^2}$};%edges
\path[->] (a) edge [bend  right] node {} (b);
\path[->] (b) edge [bend  right] node {} (a);
\end{tikzpicture}\end{minipage}\\
M &=& \{m: m \neq mt \neq mt^2 \neq m\} &  \begin{minipage}{4cm}\centering\begin{tikzpicture}
% vertices
\node (a) at  (0,0) {\phantom{t}$m$};
\node (b) at  (1.3,0) {$mt$};
\node (c) at  (3,0) {$mt^2$};%edges
\path[->] (a) edge node {} (b);
\path[->] (b) edge [bend  right] node {} (c);
\path[->] (c) edge [bend  right] node {} (b);
\end{tikzpicture}\end{minipage}
\end{array}\]
Noting that $l \in L$ if and only if $lt \in L$, it is clear that $L$ may be further partitioned into two sets of equal 
 size, $I = \{i_1, \ldots, i_s\}$ and $It = \{i_1t,\ldots, i_st\}$. Thus we may write each $t$ such that $t^3=t$ in the form:
\begin{eqnarray*}
t = \pmap{\cdots j \cdots & \cdots k \cdots & \cdots i_r \cdots & \cdots i_rt  \cdots & \cdots m \cdots \\ \cdots j \cdots & \cdots kt \cdots & \cdots i_rt \cdots  &\cdots i_r\cdots & \cdots mt \cdots}\!, \mbox{ with }kt \in J \mbox{ and }mt \in I \cup It,
\end{eqnarray*}
where here $j$, $k$ and $m$ denote arbitrary elements of the sets $J$, $K$ and $M$ respectively. We claim that the elements $e$ satisfying $e=e^2=te=et$ are precisely those of the form:
\begin{eqnarray*}
	e = \pmap{\cdots j \cdots &\cdots k \cdots &\cdots i_r \cdots&\cdots i_rt \cdots &\cdots m\cdots \\ \cdots jf \cdots & \cdots ktf \cdots & \cdots i_rf \cdots &\cdots i_rf\cdots &\cdots mtf'\cdots}\!,
\end{eqnarray*}	
where $f$ is any function $f: J \cup I \rightarrow R$ fixing a non-empty subset $R \subseteq J$ pointwise, and  $f': I \cup It \rightarrow R$ is defined by $i_rtf'=i_rf'=i_rf$.
(For example, given the transformation
$t =t^3= \pmap{1 & 2& 3& 4& 5\\1 & 3& 2& 1& 2\\}\!,$ this description yields that 
$e = \pmap{1 & 2& 3& 4& 5\\1 & 1& 1& 1& 1\\}$ is the unique idempotent satisfying $te=et=e$. Indeed, if $t$ fixes exactly one element then we have no choice but to take $R=J$, and since this is a singleton set this leaves no choice for the function $f$.)

Now to prove the claim. 
If $t \in U_n$, then by the first paragraph of the proof we may assume that $|J|>0$. It is then straightforward to describe the elements $e$ which satisfy $et=te=e=e^2$. The condition that $e$ is idempotent implies that $e$ fixes each element of its image. Further,  if $R$ is a subset of $\{1,\ldots ,n\}$, then there is a bijection between
functions $f:\{1,\ldots ,n\}\setminus R\rightarrow R$ and idempotents with image equal to $R$.  Note that (as observed above) the condition $et=e$ implies that the image of $e$ is contained in $J$.   Let $R$ be a non-empty subset of $J$. We claim that every function $f: (J\priv R) \cup I \rightarrow R$ extends uniquely to the whole of  $\{ 1,\ldots, n\}$  to give an idempotent $e \in \TXn$ with image $R$ satisfying the constraints $et=te=e$.

We have observed that every idempotent must fix its image. Thus, if $e$ is to be an idempotent with image $R$ extending $f$, we have no choice on how $e$ must act on $J \cup I$.  Now the condition $te=e$ forces $ke=kte$ for all $k \in \{1, \ldots ,n\}=J\cup K \cup I \cup It\cup M$. If $k \in K$ then $kt \in J$ and $ke =kte = (kt)f$. If $k \in It$, then $kt \in I$ and the condition $e=te$ forces $ke=kte=(kt)f$. 
If $k \in M$ then $kt \in I \cup It$  and hence either $kt\in I$ and $ke=kte=(kt)f$, or 
$kt \in It$.  In the latter case,  certainly $kt^2\in I$, so that $kt=kt^3=i_kt$ for $i_k=kt^2\in I$, and then $ke=kte=i_kf$. 
Since the sets $J \cup K \cup I \cup It \cup M$ partition the domain of $t$, this shows that there is exactly one way to extend $f$ to an idempotent $e \in \TXn$ with image $R$ satisfying $et=te=e$.

For a fixed $t$, the number of idempotents such that $(t,e) \in P_n$ is therefore found by summing the total number of functions $f: (J \priv R) \cup I \rightarrow R$ as $R$ ranges over non-empty subsets of $J$. That is, for $t \in U_n$ with partition as given above we have that the number of idempotents $e$ satisfying $(t,e) \in P_n$ is
$\sum_{r=1}^{|J|} \binom{|J|}{r}r^{|I|+|J|-r}$.
\end{proof}

We now gather together some routine but useful facts concerning the set $P_n$.

\begin{lem} \label{lem:facts on U}
Let $k \in \mathbb{N}$, $t,e \in \TXn$ and $g \in \Sn$.
\begin{enumerate}%[left=10pt]
\item If $t$ is idempotent, then $t\in U_n$ and $(t,t) \in P_n$.\label{enum:fact idempotents in Un and Pn}
\item If $(t,e) \in P_n$ with $t^2=e$, then $t=e$.\label{enum:fact on t2 is e forces t is e}
\item We have $t\in \Sn\cap U_n$ if and only if $t^2 = \id$. Consequently, $t\in\Sn\cap U_n$ is a product of an odd number [resp. even number] no more than $(n-1)/2$ of disjoint transpositions if $t\in\Sn\priv\An$ [resp. if $t\in\An$]. \label{enum:fact on t2 in Sn is id}
\item We have $\left(t^g\right)^k = t^g$ if and only if $t^k = t$. Additionally, $t^g = \id$ if and only if $t=\id$.\label{enum:fact on conjugate of power}
\item If $(t,e)\in P_n$, then $(t^g,e^g)\in P_n$. \label{enum:fact on conjugated pair}
\item If $\perte\in P_n$ and $e=\id$, then $t=\id$. \label{enum:fact on id}
\item If $\perte\in P_n$, then $t^2$ is idempotent and $\perte[t^2,e]\in P_n$.\label{enum:fact on t2 idpt}
\item If $\perte\in P_4$ and $g \in \Sn[4]$, then $\perte[t\sigma^g, e\sigma^g]\in P_4$.\label{enum:fact on P4}
\end{enumerate}
\end{lem}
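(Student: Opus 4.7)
The strategy is to dispatch each of the eight items by a direct calculation from the defining properties of $U_n$ and $P_n$, with Lemma \ref{lem:counting} invoked wherever a fixed point of $t$ is needed for membership in $U_n$.

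Items (1), (2), (6), and (7) reduce to substitutions into the defining equations $t^3 = t$ and $te = et = e^2 = e$. For (1), an idempotent $t$ trivially satisfies these with $e = t$ and fixes every point of its (nonempty) image, so Lemma \ref{lem:counting} gives $t \in U_n$. For (2), the hypothesis $e = t^2$ combined with $te = e$ yields $t^3 = t^2$, which together with $t^3 = t$ forces $t = e$. Item (6) is immediate from $te = e = \id$. For (7), the calculation $(t^2)^2 = t \cdot t^3 = t^2$ shows $t^2$ is idempotent, after which $(t^2)e = t(te) = e$ and $e(t^2) = (et)t = e$ give $(t^2, e) \in P_n$. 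Items (4), (5), and (8) exploit multiplicativity: conjugation by $g \in \Sn$ is an automorphism of $\TXn$, so commutes with powers and products, handling (4) and (5); and $\sigma^g$ is an endomorphism of $\TXn[4]$ by Theorem \ref{thm:endomorphisms}, so $(t\sigma^g)^3 = (t^3)\sigma^g = t\sigma^g$ and $(t\sigma^g)(e\sigma^g) = (te)\sigma^g = e\sigma^g$, with the remaining relations treated identically to conclude (8).

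The only item requiring slight extra care is (3). The forward direction is immediate, since $t \in \Sn$ is invertible and $t^3 = t$ then gives $t^2 = \id$; conversely, $t^2 = \id$ yields $t^3 = t$, and Lemma \ref{lem:counting} supplies the remaining fixed-point condition. Writing such a $t$ as a product of $k$ disjoint transpositions leaves $n - 2k$ fixed points, so the fixed-point requirement is equivalent to $k \leq (n-1)/2$, and the sign $(-1)^k$ gives the parity dichotomy between $\An$ and $\Sn \priv \An$. No step is genuinely difficult, which is unsurprising given that the lemma is essentially a working catalogue of facts to be reused throughout the paper.
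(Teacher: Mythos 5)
Your proof is correct. On items (1), (2), (6) and (7) it agrees with the paper's argument in substance: the same substitutions into $t^3=t$ and $te=et=e^2=e$ (the paper proves (2) via $t=t^3=t\cdot t^2=te=e$, a cosmetic variant of your $t^3=t^2$ computation). The genuine difference is in (4), (5) and (8). The paper expands conjugates explicitly for (4) and (5), and proves (8) by a three-case analysis --- $t,e\in\TXn[4]\priv\Sn[4]$; $t=e=\id$; $t\in\Sn[4]$ with $e\neq\id$ --- computing $t\sigma$ and $e\sigma$ in each case (using that $p_t$ fixes $4$, so $p_tc_4=c_4p_t=c_4=c_4^2$) and only then invoking (5) to pass from $\sigma$ to $\sigma^g$. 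Your observation that $\psi_g$ and $\sigma^g$ are endomorphisms, and hence preserve the four defining identities, dispatches (5) and (8) uniformly with no cases at all: $(t\sigma^g)^3=t^3\sigma^g=t\sigma^g$, $(t\sigma^g)(e\sigma^g)=(te)\sigma^g=e\sigma^g$, and so on, with $e\sigma^g$ itself witnessing $t\sigma^g\in U_4$. This is cleaner than the paper's treatment of (8); what it forgoes is the explicit identification of $t\sigma$ and $e\sigma$ (as $p_t$ and $c_4$), which the lemma does not require. For the ``only if'' halves of (4) you are tacitly using that an automorphism is injective, so equalities are reflected as well as preserved; that is covered by your appeal to ``automorphism'', but deserves a word.

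One sentence in (3) should be rephrased: Lemma~\ref{lem:counting} does not ``supply'' the fixed-point condition --- it \emph{demands} one, and the demand is not automatically met. For even $n$ a fixed-point-free involution $t$ satisfies $t^2=\id$ yet lies outside $U_n$, since $et=e$ forces every point of the non-empty image of $e$ to be fixed by $t$. So $t^2=\id$ alone does not yield $t\in\Sn\cap U_n$; the biconditional must be read together with the fixed-point requirement (equivalently, inside $U_n$). Your next sentence, which translates that requirement into the bound $k\leq (n-1)/2$ on the number of disjoint transpositions, shows you know this and discharges it correctly, so this is a defect of wording rather than of substance --- and one the paper itself shares, since its proof of the converse establishes only $t^2=\id\Rightarrow t\in\Sn$.
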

\begin{proof}
\begin{enumerate}[left=0pt]
\item Let $t\in \TXn$ be an idempotent. Then clearly $t^3=t = t^2$, and hence $(t,t) \in P_n$.
\item Let $(t, e) \in P_n$ be such that $t^2=e$. Then we immediately obtain that $t = t^3 = tt^2 = te = e$.
\item If $t\in \Sn$, from $t^3=t$ we get that $t^2 = tt^{-1} = \id$, while $t\notin \Sn$ directly implies that $t^2\neq \id$. The second part follows from the fact that a permutation of order $2$ is a product of disjoint transpositions, and that $t\in U_n$ must fix at least one element of $\set{1,\dots,n}$ by Lemma~\ref{lem:counting}.
\item  If $t^k = t$, then we have that $(t^g)^k = (g^{-1}tg)^k = g^{-1}t^kg = g^{-1}tg = t^g$.
Conversely, if $(t^g)^k = t^g$, then we have $g^{-1}t^kg = g^{-1}tg$ which immediately gives us that $t^k = t$.
Finally, $g^{-1}tg = t^g = \id$ if and only if $t=g\,\id\,g^{-1}=\id$ as required. 
\item Let $\perte\in P_n$ and $g\in \Sn$. Since $t^3=t$, by the previous argument, we get that $(t^g)^3=t^g$.
Also, $t^ge^g = g^{-1}teg = g^{-1}eg = e^g$ and similarly $e^gt^g = e^g = (e^g)^2$, which shows that $\perte[t^g,e^g]\in P_n$. 
\item Let $\perte\in P_n$ so that $te = e$. Then, if $e=\id$ we get that $t = t\,\id = \id$, as required. 
\item Let $\perte\in P_n$. Then from $t^3 = t$ we directly obtain that $(t^2)^2 = t^3t= t^2$ and thus $t^2$ is an idempotent and lies in $U_n$ by point \ref{enum:fact idempotents in Un and Pn}.
Since $te = et =e$, we have that $t^2 e = t(te) = te = e$, and similarly $ et^2 = e$.
Hence $t^2 e = et^2 = e = e^2$, which shows that $\perte[t^2,e]\in P_n$. 
\item Let $\perte\in P_4$ and $g\in \mathcal{S}_4$. We first show that $\perte[t\sigma, e\sigma]\in P_4$ and then apply (5) (recalling the definition of $\sigma^g$)  to obtain the full result. Notice that if $t,e\in\TXn\priv \Sn$, then $t\sigma=e\sigma=c_4$, while if $t=e=\id$, then $t\sigma=e\sigma=\id$, which by part (1) shows in both cases that $\perte[t\sigma,e\sigma]\in P_4$. Otherwise, $t\in\Sn[4]$ and $e\neq\id$, and then $t\sigma = p_t$ and $e\sigma = c_4$. Since $\sigma$ is an endomorphism, we get that 
$(t\sigma)^3=t^3\sigma=t\sigma=p_t = t\sigma$. Moreover, as $p_t\in\Sn[4]$ fixes $4$ by definition, it follows that $p_t c_4=c_4p_t=c_4=c_4^2$  and we thus have $\perte[t\sigma,e\sigma]=\perte[p_t,c_4]\in P_4$.\qedhere
\end{enumerate}
\end{proof}

We now give some characteristics of the endomorphisms of $\TXn$ with rank at most three that will play an important part in the discussions to come. 
Elements of the proofs of the following two results given explicitly can be found scattered throughout the proof of Theorem~\ref{thm:endomorphisms} in~\cite{ST98}. 
They are only included here for convenience, and to prepare the reader for similar arguments to come. 

We refer in Corollary~\ref{cor:constraints of endomorphisms with e,t} to a semilattice ordering on idempotents, which we now explain. For any semigroup $S$ we order the idempotents $E=E(S)$ of $S$ by
$e\leq f$ if $ef=fe=e$. This is easily seen to be a partial order. If $Y$ is a commutative subsemigroup of $E$, then we refer to $Y$ as a 
{\em semilattice}. The reason for employing this terminology being that the order $\leq$ restricted to $Y$ is a 
{\em semilattice ordering} in the sense that for any $e,f\in Y$ the product $ef$ is the meet of $e$ and $f$ under the partial order.

\begin{cor}\label{cor:constraints of endomorphisms with e,t}
\begin{enumerate}
\item An endomorphism $\alpha \in \EndTn$ has rank $1$ if and only if $\alpha = \pee$ for some $e^2=e \in \TXn$. There is a one-to-one correspondence between the endomorphisms of rank $1$ and the one-element subsemigroups of $\TXn$.
\item An endomorphism $\alpha \in \EndTn$ has rank $2$ if and only if $\alpha = \pte$ for some $(t,e) \in P_n$ with $t^2 = t \neq e$. There is a one-to-one correspondence between the endomorphisms of rank $2$ and the two-element semilattices $\set{t,e} \subseteq \TXn$ with $e < t$.
\item An endomorphism $\alpha \in \EndTn$ has rank $3$ if and only if  $\alpha = \pte$ for some $(t,e) \in P_n$ with $t \neq t^2 \neq e$. There is a one-to one correspondence between the endomorphisms of rank $3$ and the three-element subsemigroups of $\TXn$ consisting of a two-element subgroup $\set{t,t^2}$ having identity element $t^2$, together with an adjoined zero $e$.
\item The map $\pte[t,\id]\in \EndTn$ if and only if $t=\id$.
\item If $\puf\in \EndTn$ for some $u,f\in \TXn$, then $\pte[u^2,f]$, $\pee[f]$ and $\ptctc[u]$ are also in $\EndTn$. 
\end{enumerate}
\end{cor}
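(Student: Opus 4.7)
The plan is to rely on Theorem~\ref{thm:endomorphisms} to restrict the possible form of endomorphisms by rank, and then to apply Lemma~\ref{lem:facts on U} repeatedly for the more delicate steps. First I would observe that any $\alpha \in \EndTn$ is either an automorphism $\psi_g$ (of rank $n^n$), of the form $\pte$ (of rank at most $3$, namely $|\{t,t^2,e\}|$), or, when $n=4$, of the form $\sigma^g$ (of rank $7$). In particular any endomorphism of rank at most $3$ must be of the form $\pte$ for some $\perte \in P_n$, so for parts (1)--(3) it suffices to count the distinct elements of $\{t,t^2,e\}$ and to check the claimed bijections.

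For parts (1)--(3) I would split on cases. If $|\{t,t^2,e\}| = 1$ then $t=t^2=e$ with $e^2=e$, giving part (1); conversely any idempotent $e$ trivially yields a permissible pair $(e,e) \in P_n$ by Lemma~\ref{lem:facts on U}\ref{enum:fact idempotents in Un and Pn}, and $\im \pee = \{e\}$. For part (2), the case $t \neq t^2 = e$ is ruled out by Lemma~\ref{lem:facts on U}\ref{enum:fact on t2 is e forces t is e}, and the case $t = e \neq t^2$ contradicts $te=e$ (which would force $t^2=t$), so $|\{t,t^2,e\}|=2$ forces $t=t^2 \neq e$; the conditions $te=et=e=e^2$ then assert precisely that $\{t,e\}$ is a semilattice with $e < t$. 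For part (3), the remaining case is $t \neq t^2$ with $e$ distinct from both; here $t^3=t$ makes $\{t,t^2\}$ into a two-element group with identity $t^2$ (and $t$ self-inverse), and the relations $te=et=e^2=e$ (which imply $t^2 e = et^2 = e$) say exactly that $e$ is an adjoined zero. In each case the bijection is given by $\pte \mapsto \im \pte$, with the inverse reading $e$ off as the zero (part (3)) or the strictly smaller idempotent (part (2)), and $t$ off as the remaining element.

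Part (4) is immediate: since $\pte[t,\id]$ has rank at most $3$ it is neither an automorphism nor, if $n=4$, of the form $\sigma^g$, hence it is an endomorphism precisely when $(t,\id) \in P_n$, and by Lemma~\ref{lem:facts on U}\ref{enum:fact on id} this forces $t=\id$. For part (5), the hypothesis $\puf \in \EndTn$ gives $(u,f) \in P_n$ by the same rank argument. Then Lemma~\ref{lem:facts on U}\ref{enum:fact on t2 idpt} supplies $(u^2,f) \in P_n$ and also tells us $u^2$ is idempotent, whereupon Lemma~\ref{lem:facts on U}\ref{enum:fact idempotents in Un and Pn} applied to $u^2$ and to $f$ yields $(u^2,u^2), (f,f) \in P_n$, so all three of $\pte[u^2,f]$, $\pee[f]$ and $\ptctc[u]$ lie in $\EndTn$. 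There is no real obstacle here; the only subtlety is remembering in every part to dispose of the degenerate $n=4$ families $\sigma^g$ by a rank comparison before invoking Lemma~\ref{lem:facts on U}.
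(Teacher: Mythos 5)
Your proposal is correct and follows essentially the same route as the paper's proof: a case analysis on the size of $\im\pte = \set{t,t^2,e}$ using Lemma~\ref{lem:facts on U} parts \ref{enum:fact idempotents in Un and Pn}, \ref{enum:fact on t2 is e forces t is e}, \ref{enum:fact on id} and \ref{enum:fact on t2 idpt}, with the correspondences in (1)--(3) read off from the images. Your one addition --- explicitly ruling out the rank-$7$ maps $\sigma^g$ in the case $n=4$ by a rank comparison before invoking Lemma~\ref{lem:facts on U} --- is a point the paper leaves implicit, and is a harmless (indeed slightly more careful) refinement rather than a different approach.
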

\begin{proof}
Let $\alpha \in \EndTn$ be an element of rank at most three. Thus $\alpha = \pte$ for some $(t,e) \in P_n$ and it follows from the definition of $\pte$ that $\im\pte = \set{t, t^2, e}$.

For part (1) it is clear that $\alpha$ has rank $1$ if and only if $t=t^2=e$. In this case, since $e$ is idempotent it is clear that the image of $\pee$ is the trivial semigroup $\{e\}$. Conversely,  for each idempotent $e$
there is a unique endomorphism $\phi_{e,e}$ of rank $1$ with image $\{ e\}$.

For part (2), we note that it follows from Lemma~\ref{lem:facts on U} part (2) that $\pte$ having rank $2$ is equivalent to the condition that $t=t^2 \neq e$ (we have seen in Lemma~\ref{lem:facts on U} part (2) above that it is not possible to simultaneously have $t\neq t^2$ and $t^2 =e$, and since $e$ is idempotent it is also not possible to simultaneously have $t=e$ and $t^2\neq t$). In this case, the image of $\pte$ is $\{e,t\}$ and the relations $e=e^2=te=et$ and $t=t^2$ yield that this is a two element semilattice with $e<t$. Conversely, for each pair of distinct comparable idempotents $t,e$ with $t>e$ there is a unique endomorphism $\phi_{t,e}$ of rank $2$ with image $\{ t,e\}$. For part (3), it is clear that $\pte$ has rank $3$ if and only if $t \neq t^2 \neq e$. In this case $\pte$ has image $\{e,t,t^2\}$ and using the fact that $t^2$ is idempotent and $\perte[t^2,e]\in P_n$ by Lemma~\ref{lem:facts on U} part \ref{enum:fact on t2 idpt}, we obtain that $\set{t,t^2,e}$ is a subsemigroup of $\TXn$, where the idempotent $t^2$ acts identically on the left and right of $t$, and the idempotent $e$ acts as a left and right zero on all three elements. Conversely, for each two-element subgroup $\{ t,t^2\}$ where $t^2$ is idempotent, and each idempotent $e$ such that
$\{ t,t^2,e\}$ is a group with a zero $e$ adjoined, we have a unique endomorphism $\phi_{t,e}$ of rank $3$ with image $\{ t,t^2,e\}$. This proves parts (1)--(3). To see that (4) holds note that for the map $\pte[t,\id]$ to be in $\EndTn$, we require $\perte[t,\id]\in P_n$, which forces $t=\id$ by \ref{enum:fact on id} of Lemma~\ref{lem:facts on U}, and then $\pte[t,\id] = \phid$.
Finally, for part (5), we know that $\puf\in \EndTn$ if and only if $\perte[u,f]\in P_n$. But then using parts \ref{enum:fact on t2 idpt} and \ref{enum:fact idempotents in Un and Pn} of Lemma~\ref{lem:facts on U} we have that $u^2$ and $f$ are idempotents, and we therefore have that $u^2, f\in U_n$ as well as $(u^2, f) \in P_n$.
Hence $\pte[u^2,f]$, $\pee[f]$ and $\ptctc[u]$ satisfy all conditions to be endomorphisms.
\end{proof}

The next result is surprising in that singular elements of $\EndTn$ of rank no greater than 3 (so, all singular elements in the case $n\geq 5$) are entirely determined by their images. This has significant consequences later when we consider Green's relations.

\begin{lem}\label{lem:unicity of writing}
Let $\pte,\puf\in\EndTn$. 
Then $\pte=\puf$ if and only if \mbox{$\im\pte = \im\puf$} if and only if $t=u$ and $e=f$.
Further, if $n=4$ and $g,h\in\Sn[4]$ then $\sigma^g=\sigma^h$ if and only if $g=h$. 
\end{lem}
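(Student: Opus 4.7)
The forward implications $t=u, e=f \Rightarrow \pte = \puf \Rightarrow \im\pte = \im\puf$ are immediate from the definition of $\pte$. The substantive content is the converse: $\im\pte = \im\puf$ forces $t=u$ and $e=f$. Since $\im\pte = \{t, t^2, e\}$ and $\im\puf = \{u, u^2, f\}$, equality of the images forces the ranks to match, so I would proceed case by case on the rank using Corollary~\ref{cor:constraints of endomorphisms with e,t}.

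In the rank $1$ case, $t = t^2 = e$ and $u = u^2 = f$, so $\{e\} = \{f\}$ gives $t = e = f = u$. In the rank $2$ case, the image is a two-element semilattice $\{t,e\}$ with $e < t$, and similarly for $\{u,f\}$; since a two-element semilattice has a unique bottom and top, these must match, giving $e = f$ and $t = u$. In the rank $3$ case, the image is a three-element semigroup $\{t,t^2,e\}$ comprising a two-element subgroup $\{t,t^2\}$ (with identity $t^2$) together with an adjoined zero $e$. Here $e$ is the unique element acting as a zero on the image, $t^2$ is the unique non-zero idempotent, and $t$ is the unique non-idempotent; in particular, both $t$ and $e$ are recovered intrinsically from the image set.

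For the second assertion, recall that $s \sigma^g = (s\sigma)^g$ for all $s \in \TXn[4]$. Evaluating $\sigma^g = \sigma^h$ on any $s \in \TXn[4] \priv \Sn[4]$ gives $c_4^g = c_4^h$, and since $c_4^g = c_{4g}$, this yields $4g = 4h$. Evaluating on $s \in \Sn[4]$ instead gives $g^{-1} p_s g = h^{-1} p_s h$ for every $s \in \Sn[4]$, which rearranges to the statement that $hg^{-1}$ commutes with every $p_s$. As $s$ ranges over $\Sn[4]$, the elements $p_s$ exhaust the stabiliser of $4$ in $\Sn[4]$ (each of the six cosets of $\Klein$ contributes exactly one permutation fixing $4$, and the stabiliser also has six elements). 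Hence $hg^{-1}$ centralises this stabiliser. Since the stabiliser acts transitively on $\{1,2,3\}$, forcing $hg^{-1}$ to fix $4$, and since its intrinsic centre is trivial (as $\Sn[3]$ has trivial centre), a short argument then yields $hg^{-1} = \id$ and hence $g = h$.

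I expect the main obstacle to lie in this final step, namely identifying $\{p_s : s \in \Sn[4]\}$ with the stabiliser of $4$ and verifying triviality of its centraliser in $\Sn[4]$. The rank-by-rank analysis for the first part should be routine given Corollary~\ref{cor:constraints of endomorphisms with e,t}.
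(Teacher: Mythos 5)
Your proof is correct and takes essentially the same route as the paper's: for the first statement, a rank-by-rank analysis of the image via Corollary~\ref{cor:constraints of endomorphisms with e,t} (with $t$ and $e$ recovered intrinsically as the non-idempotent and the zero in the rank $3$ case), and for $n=4$, extracting $4g=4h$ from the constant maps and $g=h$ from the conjugation action on permutations fixing $4$. The only difference is cosmetic and occurs in the last step: the paper checks the action on the two transpositions $(1\,2)$ and $(1\,3)$, whose centralisers in $\Sn[4]$ already intersect trivially, whereas you pass through the centraliser of the full stabiliser of $4$ and the triviality of $Z(\mathcal{S}_3)$ --- the same idea, and your sketched argument (the centralising element preserves the common fixed point $4$, hence lies in the stabiliser, hence in its trivial centre) does go through, so the step you flag as the main obstacle is in fact unproblematic.
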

\begin{proof} 
Suppose that $\im\pte = \im\puf$ and consider the description of the images corresponding to the possible ranks of these maps as given in Corollary~\ref{cor:constraints of endomorphisms with e,t}. 
Clearly if $\pte$ and $\puf$ have rank $1$, then $t=e=u=f$. 
Suppose that they have rank $2$ so that $\im\pte=\set{t,e}$ and $\im\puf=\set{u,f}$ where $t$, $e$, $u$ and $f$ are all idempotents. 
Then, since their images are two element semilattices with $e<t$ and $f<u$, we get that $e=f$ and $t=u$.
Finally, if $\pte$ and $\puf$ have rank $3$, then $\set{t,t^2,e}=\set{u,u^2,f}$ where $t$ and $u$ are the only non idempotent elements, and $e$ and $f$ are the zeros, respectively, which together forces $t=u$ and  $e=f$. 
In all cases, we have shown that if $\im\pte=\im\puf$ then $t=u$ and $e=f$, and therefore $\pte=\puf$. This finishes the argument for the first statement.

If $\sigma^g=\sigma^h$ in $\EndTn[4]$, then by consideration of the images of these maps we find $4g=4h$ and $t^g = t^h$ for all $t\in\Sn[4]$ that fix $4$. The action on the transpositions
$(1\, 2)$ and $(1\, 3)$  forces $g=h$. The converse direction is clear.
\end{proof}

We also describe below the explicit multiplication of elements in $\EndTn$ for $n \neq 4$ as this will be a cornerstone of many later proofs.

\begin{cor}\label{cor:multiplication in ETn}
Let $g,h\in \Sn$ and $\perte, \perte[u,f]\in P_n$. Then we have the following compositions in $\EndTn$:
\begin{enumerate}
\item $\psi_g\psi_h = \psi_{gh}$;
\item $\psi_g\pte = \pte$;
\item $\pte\psi_g = \pte[t^g, e^g]$; and
\item $\pte\puf = \begin{cases} 
\puf &\mbox{if $t\in \Sn\priv \An$ and }e\neq \id,\\
\pte[u^2,f] &\mbox{if $t\in \An$ and }e\neq \id,\\
\pee[f] &\mbox{if $t\in \TXn\priv \Sn$ and }e\neq\id,\\
\ptctc[u] &\mbox{if }t=e=\id. \end{cases}$
\end{enumerate}
\end{cor}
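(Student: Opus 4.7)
The approach is direct case-checking using the description of the elements of $\EndTn$ from Theorem~\ref{thm:endomorphisms}: since $n\neq 4$, every endomorphism is either of the form $\psi_g$ or $\pte$, so it suffices to evaluate both sides of each equation on an arbitrary $s\in\TXn$. Parts (1), (2), (3) are almost immediate. For (1), $(s\psi_g)\psi_h=(s^g)^h=s^{gh}=s\psi_{gh}$. For (2), the key observation is that conjugation by $g\in\Sn$ preserves the partition $\TXn=(\Sn\priv\An)\cup\An\cup(\TXn\priv\Sn)$, so $s$ and $s^g$ lie in the same block, and hence $s^g\pte=s\pte$. For (3), one simply applies $\psi_g$ to each of the three possible outputs $t,t^2,e$ of $\pte$, noting that $(t^2)^g=(t^g)^2$, and reads off $\pte[t^g,e^g]$.

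The main work is part (4). First I would observe that the four listed cases are exhaustive: by Lemma~\ref{lem:facts on U}\ref{enum:fact on id}, if $e=\id$ then $t=\id$ (the last case), and otherwise $e\neq\id$ and $t$ lies in exactly one of $\Sn\priv\An$, $\An$, or $\TXn\priv\Sn$. Three preparatory facts will streamline the case analysis: (a) if $e$ is idempotent with $e\neq\id$ then $e\in\TXn\priv\Sn$, since the only idempotent of the group $\Sn$ is $\id$; (b) if $t\in\Sn$ then $t^2=\id\in\An$, by Lemma~\ref{lem:facts on U}\ref{enum:fact on t2 in Sn is id}; and (c) if $t\in\TXn\priv\Sn$ then $t^2\in\TXn\priv\Sn$ also, because $t^2\in\Sn$ together with $(t^2)^2=t^2$ would force $t^2=\id$, whence $t\in\Sn$, a contradiction.

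With these in hand, each case reduces to tracking where $s\pte\in\{t,t^2,e\}$ lands in the partition and applying the formula for $\puf$. For instance, when $t\in\Sn\priv\An$ and $e\neq\id$: an $s\in\Sn\priv\An$ is mapped to $t\in\Sn\priv\An$, hence to $u$; an $s\in\An$ is mapped to $t^2=\id\in\An$, hence to $u^2$; and an $s\in\TXn\priv\Sn$ is mapped to $e\in\TXn\priv\Sn$ by (a), hence to $f$. These three outputs match $\puf$ exactly. The remaining cases are analogous: in the $t\in\An$, $e\neq\id$ case both $t$ and $t^2=\id$ lie in $\An$, so their images under $\puf$ are $u^2$, matching $\pte[u^2,f]$; in the $t\in\TXn\priv\Sn$, $e\neq\id$ case, (c) ensures that all three of $t,t^2,e$ lie in $\TXn\priv\Sn$, so each is sent to $f$, producing the constant endomorphism $\pee[f]$; and in the case $t=e=\id$, the map $\pte$ is the constant map to $\id\in\An$, which $\puf$ then sends constantly to $u^2$, agreeing with $\ptctc[u]=\pte[u^2,u^2]$. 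In each case, Corollary~\ref{cor:constraints of endomorphisms with e,t}(5) (or Lemma~\ref{lem:facts on U}\ref{enum:fact idempotents in Un and Pn}, \ref{enum:fact on t2 idpt}) confirms that the endomorphism on the right-hand side is well defined.

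The main obstacle is purely bookkeeping rather than conceptual: one must carefully track the location of $t$, $t^2$ and $e$ within the three-part partition of $\TXn$ in each of the four subcases, and in particular remember that $t^2$ is a genuinely distinct element from $t$ in the non-group case, which forces the collapse to the constant endomorphism $\pee[f]$.
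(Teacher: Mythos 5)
Your proposal is correct and follows essentially the same route as the paper's proof: a direct evaluation on arbitrary $s\in\TXn$, with part (4) handled by tracking where $t$, $t^2$ and $e$ sit in the partition $(\Sn\priv\An)\cup\An\cup(\TXn\priv\Sn)$, using Lemma~\ref{lem:facts on U} parts \ref{enum:fact on t2 in Sn is id} and \ref{enum:fact on id} exactly as the paper does. Your only departure is cosmetic: you justify that $t\in\TXn\priv\Sn$ forces $t^2\in\TXn\priv\Sn$ via idempotency of $t^2$, a fact the paper asserts without comment.
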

\begin{proof}
All of the products are straightforward computations using the definition in Theorem \ref{thm:endomorphisms}; we only detail that for (4).
It is nonetheless worth noting that the map on the right-hand side of product (3) is well-defined and indeed belongs to $\EndTn$ by point \ref{enum:fact on conjugated pair} of Lemma~\ref{lem:facts on U}.

So consider $\perte, \perte[u,f]\in P_n$ so that $\pte, \puf\in \EndTn$, and let $s\in \TXn$.
Then:
$$ s\pte\puf = \begin{cases} t\puf &\mbox{if }s\in \Sn\priv \An, \\ t^2\puf &\mbox{if }s\in \An, \\ e\puf &\mbox{if }s\in \TXn\priv \Sn. \end{cases}$$ 
Recall from Lemma~\ref{lem:facts on U} part \ref{enum:fact on id} that if $e=\id$ then we must also have $t=\id$. Clearly, if $t=e=\id$, then we have that $t\puf = t^2\puf = e\puf = \id\puf = u^2$ so that $\pte\puf  = \ptctc[u]$ in this case. Thus in all remaining cases we may assume that $e \neq \id$ and hence $e \in \TXn \priv \Sn$.
If $t\in \TXn\priv\Sn$, then $t^2,e\in \TXn\priv \Sn$ so that $t\puf=t^2\puf=e\puf = f$.
Therefore we get that $\pte\puf = \pee[f]$ whenever $t\in\TXn\priv \Sn$. If $t\in\Sn$, then $t^2=\id$ by Lemma~\ref{lem:facts on U} part~\ref{enum:fact on t2 in Sn is id}, so that $t^2\puf = \id\puf=u^2$. 
In the case where $t\in\An$, we get that
$$ s\pte\puf = \begin{cases} t\puf = u^2 &\mbox{if }s\in \Sn\priv \An, \\ \id\puf = u^2 &\mbox{if }s\in \An, \\ e\puf=f &\mbox{if }s\in \TXn\priv \Sn, \end{cases}$$
which shows that $\pte\puf=\pte[u^2,f]$.
Otherwise, $t\in \Sn\priv\An$ and we obtain
$$ s\pte\puf = \begin{cases} t\puf = u &\mbox{if }s\in \Sn\priv \An, \\ \id\puf = u^2 &\mbox{if }s\in \An, \\ e\puf=f &\mbox{if }s\in \TXn\priv \Sn, \end{cases}$$
so that $\pte\puf=\puf$ in that case.
\end{proof}

Notice that for $n \neq 4$ the previous result encapsulates the multiplication table of $\EndTn$. In the next section we record the remaining products in the case $n=4$.

\subsection{Endomorphisms of rank 7 in \texorpdfstring{\boldmath{$\EndA[{\TXn[4]}]$}}{End(T4)}}
Let $D(4) = \{\sigma^g: g \in \Sn[4]\}$ denote the set of all endomorphisms of rank $7$ in $\EndTn[4]$. We recall from Theorem~\ref{thm:endomorphisms} the definition of the elements $\sigma^g$. We have   that $\Klein = \{\id, (1\,2)(3\,4), (1\,3)(2\,4), (1\,4)(2\,3)\}$ is a normal subgroup of $\Sn[4]$ and that each coset $\Klein s$ contains a unique element that fixes $4$, which we denote by  $p_s$. Then
$$s \sigma^g = \begin{cases}
p_s^g & \mbox {if } s \in \Sn[4],\\
c_{4g} & \mbox{if } s \in \TXn[4] \priv \Sn[4].
\end{cases}$$
Since $\Klein s = \Klein p_s$ it follows from the definition that for all $s \in \Sn[4]$ we have $p_s \sigma^g = s \sigma^g=p_s^g$. 
Since each of the maps $\sigma^g$ is an endomorphism of $\TXn[4]$ with the property that $\id$ is mapped to $\id$ 
(as clearly $\id=p_{\id}$), it is also easy to see that $p_{rs} = (rs)\sigma = r\sigma s\sigma = p_rp_s$, and so $(p_s)^{-1} = p_{s^{-1}}$. Morover, we note that $\{p_s: s \in \Sn[4]\} = \{h \in \Sn[4]: 4h=4\}$ and $h \sigma = h$ if and only if $4h=4$. These facts will be used without further comment.

\begin{lemma}
\label{lem:mult4}
	The sets $D(4)$ and $\EndTn[4] \priv D(4)$ are subsemigroups of $\EndTn[4]$. Moreover, we have the following compositions in $\EndTn[4]$:
 \begin{enumerate}
\item $\sigma^g\sigma^h = \sigma^{p_g h}$;
\item $\sigma^g\psi_h = \sigma^{gh}$;
\item $\psi_h\sigma^g = \sigma^{p_h g}$;
\item $\sigma^g\pte = \pte$; and
\item $\pte\sigma^g = \pte[t\sigma^g, e\sigma^g]$.
 \end{enumerate}
\end{lemma}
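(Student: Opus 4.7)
The approach is to verify each of the five composition formulas by direct case analysis on $s \in \TXn[4]$, splitting according to whether $s$ lies in $\Sn[4] \priv \An[4]$, in $\An[4]$, or in $\TXn[4] \priv \Sn[4]$. Once these formulas are established, the subsemigroup claims follow immediately: formula~(1) shows that $D(4)$ is closed under composition, while closure of $\EndTn[4] \priv D(4) = \AutTn[4] \cup \set{\pte : (t,e) \in P_4}$ follows from Corollary~\ref{cor:multiplication in ETn}, whose four listed products all remain within this set.

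The technical core of the case analysis is the observation recorded before the lemma that $s \mapsto p_s$ is multiplicative, i.e.\ $p_{rs} = p_r p_s$ for all $r,s \in \Sn[4]$, from which we get $p_{s^h} = p_{h^{-1}sh} = p_h^{-1} p_s p_h = p_s^{p_h}$. For formulas (1) and (3) applied to $s \in \Sn[4]$, this identity lets us rewrite expressions such as $p_{p_s^g}^h$ in the desired form: for (1) we compute $p_{p_s^g}^h = (p_s^{p_g})^h = p_s^{p_g h}$, which matches $s\sigma^{p_g h}$, and (3) is analogous. For $s \in \TXn[4] \priv \Sn[4]$, these formulas reduce to checking equalities of constant maps of the form $c_{4k}$, which hold because $p_g$ and $p_h$ fix~$4$. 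Formula (2) is the most straightforward, since no $p$-manipulation is needed: conjugation by $h$ interacts cleanly with both $p_s^g$ and $c_{4g}$.

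For formula (4) the key additional observation is that every element of $\Klein$ is even, so $p_s$ and $s$ have the same parity; parity is preserved under conjugation by $g$, hence $p_s^g$ lies in the same one of $\An[4]$ or $\Sn[4] \priv \An[4]$ as $s$, giving $(p_s^g)\pte = s\pte$ in each of those cases. In the remaining case $s \in \TXn[4] \priv \Sn[4]$, we have $s\sigma^g = c_{4g}$ which is singular, so both sides evaluate to $e$. Finally, formula (5) is almost immediate: Lemma~\ref{lem:facts on U} part~\ref{enum:fact on P4} ensures that $\perte[t\sigma^g, e\sigma^g] \in P_4$ so the target is well defined, and since $\sigma^g$ is an endomorphism we have $(t\sigma^g)^2 = t^2\sigma^g$; the three cases defining $\pte[t\sigma^g, e\sigma^g]$ then match the three cases of $s\pte\sigma^g$ term by term.

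The main obstacle is essentially bookkeeping in formula~(1), where the $\Klein$-homomorphism identity must be applied twice while tracking conjugations; no conceptually new ingredient is required beyond the preparatory facts already recorded before the lemma and Lemma~\ref{lem:facts on U}.
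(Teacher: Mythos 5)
Your proposal is correct and follows essentially the same route as the paper's proof: both verify formulas (1)--(5) by direct case analysis on $s$, using the multiplicativity of $s \mapsto p_s$ (you package it as the conjugation identity $p_{s^h} = p_s^{p_h}$, the paper expands $p_{g^{-1}p_sp_g}$ inline, which is the same computation), the fact that $p_g$ fixes $4$ for the singular cases, parity preservation for (4), and Lemma~\ref{lem:facts on U}\ref{enum:fact on P4} together with the endomorphism property $(t\sigma^g)^2 = t^2\sigma^g$ for (5). The subsemigroup claims are also handled identically, with $D(4)$ closed by formula (1) and the complement closed by Corollary~\ref{cor:multiplication in ETn}.
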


\begin{proof}
That $\EndTn[4] \priv D(4)$ is a subsemigroup follows from Corollary~\ref{cor:multiplication in ETn}. To see that $D(4)$ is a subsemigroup, consider  $\sigma^g, \sigma^h \in D(4)$. Using the fact that $\sigma$ is an endomorphism, for all $s \in \TXn[4]$ we have:
\begin{eqnarray*}
s \sigma^g\sigma^h &=& \begin{cases}
	(g^{-1}p_sg)\sigma^h & \mbox {if } s \in \Sn[4]\\
	c_{4g}\sigma^h & \mbox{if } s \in \TXn[4] \priv \Sn[4]
	\end{cases} = \begin{cases}
	h^{-1}(p_{g^{-1}}p_sp_g)h & \mbox {if } s \in \Sn[4]\\
	c_{4h} & \mbox{if } s \in \TXn[4] \priv \Sn[4]
	\end{cases},\\
 &=& s\sigma^{p_gh},
 \end{eqnarray*}
where we use the fact that both $p_s$ and $p_g$ fix $4$ and hence 
$p_s\sigma=p_s$ and $c_{4h} = c_{4p_gh}$. This shows that (1) holds, and hence that $D(4)$ is a subsemigroup.

For all $s \in \TXn[4]$ it is clear that
	$s\sigma^g\psi_h = h^{-1}s \sigma^g h = s\sigma^{gh}$, and so (2) holds. 
 Similarly, using the fact that $\sigma$ is an endomorphism $s\psi_h\sigma^g = (h^{-1}sh)\sigma^g = g^{-1}p_h^{-1}s\sigma p_hg= s\sigma^{p_h g}$ as given in (3).

For (4) note that for $s\in \mathcal{S}_4$ we have that $s\in \An[4]$ if and only if $p_s\in  \An[4]$. It follows that $s \sigma^g$ lies in $\Sn[4] \priv \An[4]$ (respectively, $\An[4]$, $\TXn[4] \priv \Sn[4]$) if and only if $s$ lies in $\Sn[4] \priv \An[4]$ (respectively, $\An[4]$, $\TXn[4] \priv \Sn[4]$).

Finally, for all $s \in \TXn[4]$ we have
	$$s\pte \sigma^g = \begin{cases} t\sigma^g & \mbox{ if } s \in \Sn[4] \priv \An[4]\\
	t^2\sigma^g & \mbox{ if } s \in \An[4]\\
	e\sigma^g & \mbox{ if } s \in \TXn[4] \priv \Sn[4].
	\end{cases}$$
Notice that since $\sigma^g$ is an endomorphism, we have that $t^2\sigma^g = (t\sigma^g)^2$. The result then follows from the fact that $\perte[t\sigma^g,e\sigma^g]\in P_4$ by \ref{enum:fact on P4} of Lemma~\ref{lem:facts on U}.
\end{proof}

\subsection{A decomposition via rank and type}
\label{sec:type}
The monoid $\EndTn$ can be partitioned in a convenient way by considering the following subsets of $\singn$:
\begin{align*}
E_3(n) &= \{\pte \in \EndTn \colon t \in \Sn \setminus \An, e\neq \id\},\\
A(n) &= \set{\pte\in \EndTn \colon t \in \An, t \neq \id \neq e},\\
B(n) &= \set{\pte\in \EndTn \colon t\in \TXn \setminus \Sn, t \neq t^2 \neq e \neq \id}, \\
E_2(n) &= \{\pte[\id,e] \in \EndTn\colon e\neq \id\},\\
C(n) &= \set{\pte\in \EndTn \colon t\in \TXn \setminus \Sn, t = t^2 \neq e\neq \id} \text{ and }\\
E_1(n) &= \{\pee \in \EndTn\}.
\end{align*}
For $n=4$ we also define $E_7(4) = \{\sigma^g: g \in \Klein\}\subsetneq D(4) =\{\sigma^g: g \in \Sn[4]\}$. To reduce notation, when it is clear from context we will suppress the dependence on $n$ and write simply $E_3, A, B, E_2, C, E_1, E_7$ and $D$. These subsets group together maps that share similar properties, such as the idempotents of a given rank:

\begin{lem}\label{lem:idempotents of EndTn}
For $k=1,2, 3,7$ the set $E_k$ consists of all the idempotents of rank $k$ in $\EndTn$. The set of idempotents of $\EndTn$ is therefore $$E(\EndTn) = \begin{cases}
\set{\varepsilon}\cup E_7\cup E_3\cup E_2\cup E_1 & \mbox { if } n=4,\\
\set{\varepsilon}\cup E_3\cup E_2\cup E_1 & \mbox { otherwise}. \\
\end{cases}$$ 
\end{lem}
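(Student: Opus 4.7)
The plan is to classify the idempotents by first applying Theorem~\ref{thm:endomorphisms} to list the forms of endomorphism, and then in each family determine which elements square to themselves, using the explicit multiplication given in Corollary~\ref{cor:multiplication in ETn} (and Lemma~\ref{lem:mult4} in the case $n=4$), with Lemma~\ref{lem:unicity of writing} providing the criterion for equality of two endomorphisms.

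First I handle the automorphisms. Since $\psi_g\psi_h=\psi_{gh}$ and $g\mapsto\psi_g$ is injective, $\psi_g^2=\psi_g$ is equivalent to $g^2=g$ in $\Sn$, forcing $g=\id$. Hence $\varepsilon=\psi_{\id}$ is the unique idempotent in $\AutTn$.

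Next I address the singular endomorphisms of rank at most $3$, which by Corollary~\ref{cor:constraints of endomorphisms with e,t} are exactly the maps $\pte$ with $(t,e)\in P_n$. I compute $\pte^2$ by running through the four cases of Corollary~\ref{cor:multiplication in ETn}(4). If $t\in\Sn\priv\An$ and $e\neq\id$ then $\pte^2=\pte$, so every such map is idempotent; here $t\ne\id$, $t^2=\id$ and $e\in\TXn\priv\Sn$, so $\{t,\id,e\}$ has three distinct elements and these maps are precisely the elements of $E_3$, of rank~$3$. If $t\in\An$ and $e\ne\id$ then $\pte^2=\pte[t^2,e]$, which by Lemma~\ref{lem:unicity of writing} equals $\pte$ iff $t^2=t$; since $t$ is a permutation with $t^3=t$, this forces $t=\id$, giving exactly $E_2$, of rank~$2$. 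If $t\in\TXn\priv\Sn$ and $e\ne\id$ then $\pte^2=\pee$, equal to $\pte$ iff $t=e$, in which case the map lies in $E_1$. Finally, if $t=e=\id$ then $\pte^2=\ptctc[\id]=\pee[\id]\in E_1$. In particular $A(n)$, $B(n)$ and $C(n)$ contain no idempotents, and the singular idempotents of rank at most~$3$ are precisely $E_3\cup E_2\cup E_1$.

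For $n=4$ it remains to treat the rank-$7$ elements $\sigma^g\in D(4)$. By Lemma~\ref{lem:mult4}(1), $(\sigma^g)^2=\sigma^{p_g g}$, which by Lemma~\ref{lem:unicity of writing} equals $\sigma^g$ precisely when $p_g=\id$, that is, $\id\in\Klein g$, equivalently $g\in\Klein$. Hence $E_7=\{\sigma^g:g\in\Klein\}$ is the set of rank-$7$ idempotents. Assembling the contributions from the different ranks (which are pairwise disjoint since the cardinality of the image distinguishes them from each other and from the automorphism $\varepsilon$) yields the claimed description of $E(\EndTn)$.

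The step that requires the most care is the rank-$\le 3$ case analysis: it is perhaps surprising that the genuine two-element semilattices giving rise to $C(n)$ and the two-element groups with adjoined zero giving rise to $A(n)$ and $B(n)$ produce endomorphisms that are themselves not idempotent, the point being that the multiplication rules of Corollary~\ref{cor:multiplication in ETn}(4) collapse these structures on squaring, so that idempotency forces $t$ to be either the identity permutation or an element of $\Sn\priv\An$.
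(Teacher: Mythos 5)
Your proof is correct and takes essentially the same route as the paper: both arguments classify the idempotents by directly computing squares using the case analysis of Corollary~\ref{cor:multiplication in ETn}(4) for the singular maps $\pte$ (reducing $\pte^2=\pte$ to conditions on $t$ and $e$ via the uniqueness statement of Lemma~\ref{lem:unicity of writing}) and Lemma~\ref{lem:mult4}(1) for the rank-$7$ maps, where $(\sigma^g)^2=\sigma^{p_gg}=\sigma^g$ holds precisely when $p_g=\id$, i.e.\ $g\in\Klein$. Your version simply spells out in more detail the steps the paper treats as immediate, such as the injectivity of $g\mapsto\psi_g$ and the exhaustiveness of the four cases.
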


\begin{proof}
Clearly, the only idempotent element of $\AutTn$ is $\psid$. 
It follows from the multiplication in Corollary \ref{cor:multiplication in ETn} that $\alpha = \pte\in \EndTn$ is idempotent if and only if either (i) $t$ is odd and $e \neq \id$ (in which case $\alpha \in E_3$), or (ii) $t = t^2\neq e$ and $t\in \Sn$ (in which case $t=\id$ and $\alpha \in E_2$) or (iii) $t=e$ from the last two cases of 4) in Corollary \ref{cor:multiplication in ETn} (in which case $\alpha \in E_1$). For $n=4$ it follows from Lemma \ref{lem:mult4} that $\sigma^g$ is idempotent if and only if $p_g = \id$ or, in other words, if and only if $g \in \Klein$.
\end{proof}

The remaining sets $A, B, C$ (together with $D \priv E_7$ in the case $n=4$) account for all the non-idempotent singular maps; the reasoning behind this grouping shall be made apparent shortly.

We remind the reader that a non-empty subset $I$ of a semigroup $S$ is an \emph{ideal} (respectively, a \emph{left} or \emph{right ideal}) if $SI\cup IS\subseteq I$ (respectively,  $SI\subseteq I$, or $IS\subseteq I$).

\begin{lem}\label{lem:partition of EndTn}
The endomorphism monoid $\EndTn$ can be written as:
$$\EndTn = \begin{cases}
\AutTn\cup D \cup \left(E_3 \cup B\right) \cup \left(E_2\cup C\right) \cup (E_1 \priv \{\phid\} \cup \{\phid\}) & \mbox{ if } n=4,\\
\AutTn\cup \left(E_3\cup A \cup B\right) \cup \left(E_2\cup C\right) \cup (E_1 \priv \{\phid\} \cup \{\phid\}) & \mbox{ otherwise,}
\end{cases}$$
where subsets containing endomorphisms of the same rank are bracketed together. For $n\geq 2$ this union is disjoint and the set $\EndTn \priv \AutTn$ is an ideal of $\EndTn$. 
\end{lem}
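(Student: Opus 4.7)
The plan is to deduce this partition directly from the characterisation of $\EndTn$ in Theorem~\ref{thm:endomorphisms} together with the rank analysis of Corollary~\ref{cor:constraints of endomorphisms with e,t}. Starting from the fact that $\EndTn \priv \AutTn$ consists of the maps $\pte$ for $(t,e)\in P_n$ (plus the rank $7$ maps $\sigma^g$ when $n=4$), I will show that each such $\pte$ lies in exactly one of $E_1, E_2, C, E_3, A, B$ by case analysis on the rank of $\pte$, which equals $|\{t, t^2, e\}|$.

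The case split proceeds as follows. If $\pte$ has rank $1$ then $t = t^2 = e$, placing $\pte$ in $E_1$. If $\pte$ has rank $2$ then $t = t^2 \neq e$; by Lemma~\ref{lem:facts on U}(3), $t \in \Sn$ forces $t = \id$ and hence $\pte \in E_2$, while $t \in \TXn \priv \Sn$ gives $\pte \in C$. If $\pte$ has rank $3$ then $t \neq t^2$, and Lemma~\ref{lem:facts on U}(2) ensures $t^2 \neq e$ automatically; this case splits further according to whether $t \in \Sn \priv \An$ (giving $\pte \in E_3$), $t \in \An \priv \{\id\}$ (giving $\pte \in A$), or $t \in \TXn \priv \Sn$ (giving $\pte \in B$). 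Throughout, Lemma~\ref{lem:facts on U}(6) guarantees that the side condition $e \neq \id$ appearing in the definitions of $A, B, C, E_2, E_3$ is automatic whenever $t \neq \id$.

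For $n=4$ I need one extra observation: $A(4) = \emptyset$. This holds because any $t \in \An[4]\priv\{\id\}$ with $t^3 = t$ must satisfy $t^2 = \id$ and hence be a double transposition lying in $\Klein$, so $t$ has no fixed points and fails the criterion $t \in U_n$ recorded in Lemma~\ref{lem:counting}. The additional rank $7$ maps $\sigma^g$ constitute $D$ by definition, which accounts for the extra term in the $n=4$ partition.

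Disjointness of the partition for $n \geq 2$ is then immediate: distinct parts either have distinct ranks (using $n^n \geq 4$ for $n \geq 2$, while all non-automorphism parts have rank at most $7$) or correspond to mutually exclusive conditions on $t$, with Lemma~\ref{lem:unicity of writing} ruling out double-counting within each rank. Finally, the ideal property of $\singn$ is a direct read-off from the multiplication tables in Corollary~\ref{cor:multiplication in ETn} and Lemma~\ref{lem:mult4}: every product of an element of $\EndTn$ with a singular endomorphism is again singular. The only delicate point is the $n=4$ bookkeeping---verifying that $A(4)$ is empty rather than overlapping some other part, and that $D$ is correctly separated from the rank $\leq 3$ parts; everything else is a routine consequence of the results already established in this section.
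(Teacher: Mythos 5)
Your proposal is correct and follows essentially the same route as the paper's proof: the union and the rank-bracketing are read off from Theorem~\ref{thm:endomorphisms} and Corollary~\ref{cor:constraints of endomorphisms with e,t} via a case analysis on the rank of $\pte$, the set $A(4)$ is observed to be empty, disjointness follows from comparing ranks (with Lemma~\ref{lem:unicity of writing} precluding double representation within a rank), and the ideal property is immediate from the multiplication in Corollary~\ref{cor:multiplication in ETn} and Lemma~\ref{lem:mult4}. If anything, you supply details the paper leaves implicit, notably the fixed-point argument (via Lemma~\ref{lem:counting}, or equivalently the bound in Lemma~\ref{lem:facts on U}(3)) showing $A(4)=\emptyset$, and the explicit verification of the ideal claim.
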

\begin{proof}
That $\EndTn$ is the union of the given sets follows from Theorem~\ref{thm:endomorphisms} and Corollary~\ref{cor:constraints of endomorphisms with e,t}, noting that the constraints of the given sets cover all eventualities and that the set $A$ is empty in the case of $n=4$. The fact that the bracketed expressions are the sets of endomorphisms of the same rank also follows from Corollary~\ref{cor:constraints of endomorphisms with e,t}. Each automorphism of $\EndTn$  has rank $n^n$ and hence the union is disjoint  for $n \geq 2$. 
\end{proof}

As we have seen in Corollary \ref{cor:multiplication in ETn}, there is an important distinction in the multiplicative behaviour of elements $\pte$ depending on where the element $t\in \TXn$ lies and whether $e=\id$. 
Since this will be of great importance to determine Green's relations and the ideal structure of $\EndTn$, we define the {\em type} of an endomorphism $\theta$ relative to the type of the underlying transformations associated with $\theta$.
\begin{defn}\label{def:types}
We say that $\theta \in \EndTn$ is of:
\begin{itemize}
\item \emph{group type} if $\theta\in \AutTn$;
\item \emph{exceptional type} if $n=4$ and $\theta\in D(4)$;
\item \emph{odd type} if $\theta \in E_3(n)$;
\item \emph{even type} if $\theta \in A(n) \cup E_2(n)$;
\item \emph{non-permutation type} if $\theta \in B(n) \cup C(n) \cup \left(E_1(n) \setminus \{\phid\}\right)$;
\item \emph{trivial type} if $\theta = \phid$. 
\end{itemize} 
Notice that the partition of $\EndTn$ given by Lemma \ref{lem:partition of EndTn} is therefore a partition into subsets of elements having the same rank \emph{and} type.
\end{defn}

The notion of type is a good one since this characteristic of a map is stable under multiplication by automorphisms.

\begin{lem}\label{lem:type stable under mult by automorphism}\label{lem:ABC only stable by G under right mult}
\begin{enumerate}
\item For  any $\phi_{t,e}\in \EndTn$ and $\psi_g\in \AutTn$ we have that
\[ \phi_{t,e}, \;\; \psi_g\phi_{t,e}=\phi_{t,e} \;\; \mbox{ and } \;\; \phi_{t,e}\psi_g=\phi_{t^g,e^g}\]
have the same type.
\item For any $\sigma^h\in \mathcal{E}_4$ and any $\psi_g\in \AutTn[4]$ we have that
\[ \sigma^h, \;\;  \psi_g\sigma^h=\sigma^{p_g h} \;\; \mbox{ and } \;\;  \sigma^h\psi_g=\sigma^{hg}\]
have the same (exceptional) type.
\item Let $n\neq 4$ and $\gamma\in \EndTn$. For any $\phi_{t,e}\in X$ where $X$ is one of $A$, $B$ or $C$, 
we have $\phi_{t,e}\gamma\in X$ if and only if $\gamma\in \AutTn$.
\end{enumerate}
\end{lem}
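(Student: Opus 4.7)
The plan for part (1) is to observe that $\psi_g\phi_{t,e} = \phi_{t,e}$ is trivially of the same type as itself, so only $\phi_{t,e}\psi_g = \phi_{t^g, e^g}$ needs attention. The six type regions are each defined by a conjunction of Boolean conditions on $t$ and $e$ (membership in $\Sn$, $\An$, or $\TXn\setminus \Sn$; equality with $\id$; idempotency $t=t^2$; and equality $t=e$), and conjugation by $g\in \Sn$ preserves each such condition. Membership in $\Sn$, $\An$, and $\TXn\setminus \Sn$ is preserved because conjugation is an automorphism of $\TXn$; the condition $t=\id$ is preserved by Lemma~\ref{lem:facts on U}\ref{enum:fact on conjugate of power}; the condition $t=t^2$ is preserved because $(t^g)^2 = (t^2)^g$; and $t=e$ clearly transfers to $t^g=e^g$. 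Running through the six defining conditions in Definition~\ref{def:types} then gives the claim.

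For part (2) the plan is an immediate appeal to Lemma~\ref{lem:mult4}(2) and (3): the computations $\psi_g\sigma^h = \sigma^{p_g h}$ and $\sigma^h\psi_g = \sigma^{hg}$ both land in $D(4)$, and so all three elements have exceptional type.

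For part (3), the implication $\gamma\in \AutTn \Rightarrow \phi_{t,e}\gamma\in X$ follows from part (1), since the type regions $A$, $B$, $C$ are each a full ``type stratum''. For the converse, I would suppose $\gamma\notin \AutTn$; since $n\neq 4$, Theorem~\ref{thm:endomorphisms} forces $\gamma = \phi_{u,f}$ for some $(u,f)\in P_n$. I would then split into cases on $X$ and apply the multiplication rule Corollary~\ref{cor:multiplication in ETn}(4).

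If $\phi_{t,e}\in B\cup C$, then $t\in \TXn\priv \Sn$ and $e\neq \id$, so $\phi_{t,e}\phi_{u,f} = \phi_{f,f}\in E_1$, which is disjoint from $B\cup C$, yielding a contradiction. The main obstacle is the case $X=A$: here $t\in \An$ and $e\neq \id$, so $\phi_{t,e}\phi_{u,f} = \phi_{u^2, f}$, and one must argue that this cannot lie in $A$. The key observation is that $u^2$ is always idempotent (Lemma~\ref{lem:facts on U}\ref{enum:fact on t2 idpt}), and the only idempotent permutation in $\TXn$ is $\id$; hence $u^2\in \An$ would force $u^2=\id$, putting $\phi_{u^2,f}=\phi_{\id, f}$ into $E_2\cup\{\phid\}$ rather than $A$. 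This contradiction completes the proof.
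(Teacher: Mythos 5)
Your proposal is correct and follows essentially the same route as the paper: parts (1) and (2) are read off from Corollary~\ref{cor:multiplication in ETn} and Lemma~\ref{lem:mult4} (the paper compresses your Boolean-condition check into the observation that conjugation preserves parity and rank), and part (3) is the same case analysis of the product $\phi_{t,e}\phi_{u,f}$, with the only variation being that in the $A$-case the paper notes that $\phi_{u^2,f}$ has rank at most $2$, whereas you argue via idempotency of $u^2$ --- both are immediate. One harmless imprecision: $A$, $B$, $C$ are not ``full type strata'' (even type is $A\cup E_2$ and non-permutation type is $B\cup C\cup (E_1\priv\set{\phid})$), but this does not matter because your part (1) argument in fact shows that conjugation preserves each block $A$, $B$, $C$ separately, which is exactly what the forward direction of (3) needs.
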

\begin{proof}
The proof of $(1)$ and $(2)$ follow immediately from Corollary~\ref{cor:multiplication in ETn} and Lemma~\ref{lem:mult4}, together with the observation that
conjugation in $\EndTn$ preserves the parity of elements in $\mathcal{S}_n$ and the rank of all elements. 

To show that $(3)$ holds, it only remains to show the converse. Let $\phi_{t,e}$ be in $A$, so that it is of even type and rank $3$.
By Corollary~\ref{cor:multiplication in ETn} we have that for any $\phi_{u,f}$ we have that $\phi_{t,e}\phi_{u,f}=\phi_{u^2,f}$  has rank at most $2$, so cannot lie in $A$.
Similarly, if $\phi_{t,e}$ is in $B$ or $C$, so that it has non-permutation type and is of rank $3$ or $2$, then $\phi_{t,e}\phi_{u,f}=\phi_{f,f}$ has rank $1$, so cannot lie in $B$ or $C$. 
\end{proof}

\begin{definition}
\label{notn:Xte}
For $\alpha \in \EndTn$ we define the {\em orbit} of $\alpha$ to be $\alpha\AutTn$. It is easy to see that all elements of a given orbit have the same rank and (by Lemma~\ref{lem:type stable under mult by automorphism}) the same type. In view of the decomposition given in Lemma \ref{lem:partition of EndTn} we note that each of the sets $\AutTn, E_3, A, B, E_2, C, E_1$ (and $D$ in the case $n=4$) is a union of orbits. For $\phi_{t,e} \in X$ where $X$ is one of $A$, $B$ or $C$, it will sometimes be convenient to write $X_{t,e}$ to denote the orbit of $\phi_{t,e}$, in order to easily recall the rank and type of elements in this orbit without specific mention of the corresponding properties of $t$ and $e$.
\end{definition}

For all $n \geq 5$, being of the same type is equivalent to acting in the same way by multiplication on the left on the singular part of $\EndTn$.
\begin{lem}\label{lem:equiv same type and products equal}
Let $n \geq5 $ and let $\alpha, \beta\in \singn$. Then $\alpha$ and $\beta$ are of the same type if and only if $\alpha\gamma = \beta\gamma$ for all $\gamma\in \singn$.
\end{lem}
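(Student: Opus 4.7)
The plan is to exploit Corollary~\ref{cor:multiplication in ETn}(4): for $n\geq 5$, Theorem~\ref{thm:endomorphisms} tells us that every $\alpha\in\singn$ has the form $\pte$ with $\perte\in P_n$, so the value of $\alpha\gamma$ (for $\gamma\in\singn$) depends on $\alpha$ only through which of the four cases of that corollary applies. I claim these four cases correspond exactly to the four singular types. The forward direction is then a case check, while for the reverse I would prove the contrapositive by exhibiting a single $\gamma$ which simultaneously separates all four types.

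For $(\Rightarrow)$, fix any $\gamma=\puf\in\singn$ and examine $\alpha\gamma$ according to the type of $\alpha=\pte$. If $\alpha$ is of odd type, then $t\in\Sn\priv\An$ and $e\neq\id$, giving $\alpha\gamma=\gamma$. If $\alpha$ is of even type, then $\alpha\in A\cup E_2$, so in either subcase $t\in\An$ and $e\neq\id$, giving $\alpha\gamma=\pte[u^2,f]$. If $\alpha$ is of non-permutation type, then $\alpha\in B\cup C\cup(E_1\priv\set{\phid})$; in all three subcases $t\in\TXn\priv\Sn$ and $e\neq\id$ (for $E_1\priv\set{\phid}$ one uses that a non-identity idempotent has rank strictly less than $n$ and so cannot be a permutation), giving $\alpha\gamma=\pee[f]$. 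Finally, if $\alpha$ is of trivial type then $\alpha=\phid$ is uniquely determined. In every case the value of $\alpha\gamma$ depends on $\alpha$ only through its type, so same-type elements multiply identically on the right by any $\gamma\in\singn$.

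For $(\Leftarrow)$, it suffices to produce one $\gamma\in\singn$ for which the assignment $\alpha\mapsto\alpha\gamma$ separates the four types. Since $n\geq 5$, take $u=(1\,2)(3\,4)\in\An$ and $f=c_5$; then $u^3=u$, $uf=fu=f=f^2$, and the three elements $u$, $u^2=\id$, $c_5$ are pairwise distinct, so $\gamma=\puf$ is a well-defined singular endomorphism of rank $3$. Running through the four types via Corollary~\ref{cor:multiplication in ETn}(4), the product $\alpha\gamma$ takes the respective values
\[
\phi_{(1\,2)(3\,4),\,c_5},\qquad \phi_{\id,\,c_5},\qquad \phi_{c_5,c_5},\qquad \phid,
\]
corresponding to odd, even, non-permutation and trivial type. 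These four endomorphisms have pairwise distinct defining pairs $(t,e)$ and so are pairwise distinct by Lemma~\ref{lem:unicity of writing}. Consequently, if $\alpha$ and $\beta$ have different types then $\alpha\gamma\neq\beta\gamma$, which is the desired contrapositive. The only minor subtlety in the argument is establishing that $E_1\priv\set{\phid}$ really belongs to the non-permutation case, which is handled by the rank observation already noted.
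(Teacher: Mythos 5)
Your proof is correct and takes essentially the same route as the paper: the forward direction matches the four cases of Corollary~\ref{cor:multiplication in ETn}(4) to the four singular types (including the same observation that a non-identity idempotent is not a permutation), and the converse exhibits a single separating $\gamma = \puf$ whose four possible products are pairwise distinct by Lemma~\ref{lem:unicity of writing}. The only difference is your choice of witness ($u = (1\,2)(3\,4)\in\An$, $f=c_5$, versus the paper's $u=(2\,3)$, $f=c_1$), which is immaterial.
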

\begin{proof}
Since $\alpha, \beta, \gamma\in \singn$, only (4) in Corollary \ref{cor:multiplication in ETn} is relevant.
It follows immediately from the description of this multiplication that if $\alpha$ and $\beta$ are of the same type, then $\alpha\gamma = \beta\gamma$ for all $\gamma \in \singn$.

Conversely, suppose $\alpha, \beta \in \singn$ are such that $\alpha\gamma=\beta\gamma$ for all $\gamma \in \singn$. If $\gamma = \phi_{u,f}\in \EndTn$ where $u = (2\,3) \neq \id = u^2 \neq f = c_1 \neq u$, then the maps $\puf$, $\pte[u^2,f]$, $\pee[f]$ and $\ptctc[u]$ are all distinct. From this, it is clear that if $\alpha$ and $\beta$ have different types, then we fall into into a different case for the multiplication and therefore $\alpha\gamma\neq \beta\gamma$, which gives us the equivalence.
\end{proof}

\begin{rem}\label{rem:bijections}
The authors of \cite{ST98} gave explicit formulae to count the number of endomorphisms of each rank. Since $\AutTn$ and $\Sn$ are isomorphic we have $\Card{\AutTn} = n!$. 
Meanwhile, it is clear that for any $e=e^2\in \EndTn$ we have that $(e,e)\in P_n$ and if $e\neq \id$ then also $(\id,e)\in P_n$.
Consequently, \mbox{$\Card{E_1(n)} = \Card{E_2(n)} + 1 = \Card{\set{e\in \TXn\colon e^2 = e}}$}. We shall not attempt to give formulae for the cardinality of the remaining sets in our partition of $\EndTn$, but it is useful to note that for $n \geq 5$ each set in this partition is non-empty. We conclude this section by recording examples to demonstrate this below; some of these examples will be utilised in later proofs.
\end{rem}

\begin{example}
\label{ex:nonemptyparts}
Let $n\geq 5$ and let $t, u, p,q, e, f\in \TXn$ be defined as follows:
\begin{gather*}
t = \pmap{1 & 2 & 3 & 4 & i_{\geq 5}\\1 & 3 & 2 & 1 & i}, \quad 
p = \pmap{1 & 2 & 3 & 4 & i_{\geq 5}\\2 & 1 & 4 & 3 & i}, \quad  
e = \pmap{1 & 2 & 3 & 4 & i_{\geq 5}\\5 & 5 & 5 & 5 & i},
\end{gather*}
\begin{gather*}
u = \pmap{1 & 2 & 3 & 4 & i_{\geq 5}\\1 & 1 & 1 & 4 & 4}, \quad q = \pmap{1 & 2 & 3 & 4 & i_{\geq 5}\\1 & 3 & 2 & 4 & i}, \quad 
f = \pmap{1 & 2 & 3 & 4 & i_{\geq 5}\\1 & 1 & 1 & 1 & 1}.
\end{gather*}
It is easy to verify that $p, q\in \Sn$, $p \in \An$, $q\in\mathcal{S}_n\setminus \mathcal{A}_n$ and $t, u, e, f\in \TXn\priv \Sn$ are such that:
\begin{itemize}
\item $e^2 = e \neq \id \neq f = f^2$;
\item $p^2 = \id = q^2$, $pe = e = ep$ and $qf = f = fq$ so that $\perte[p,e],\perte[q,f]\in P_n$;
\item $t^3 = t$ and $te = e = et$ so that $\perte\in P_n$.
\item $u^2 = u$ and $uf = f = fu$ so that $\perte[u,f]\in P_n$.
\end{itemize}
It then follows that the maps $\pte[p,e], \pte[q,f], \pte, \pte[\id, f]$ and $\pte[u,f]$ are endomorphisms of $\TXn$.
Moreover, it is clear from definition that $\pte[q,f] \in E_3(n)$, $\pte[p,e]\in A(n)$, $\pte\in B(n)$, $\pte[\id, f]\in E_2(n)$, $\pte[u,f]\in C(n)$, and $\pee[f]\in E_1\setminus\set{\phid}$.
\end{example}

\begin{remark}
We will make use of the preceding results repeatedly in the following sections where we describe the regular elements, minimal generating sets (and moreover, a presentation), Green's relations,  ideal structure and extended Green's relations for $\EndTn$. In order to state our results in their most general form, it will be convenient to assume that $n \geq 5$; this, of course, bypasses the case when $n=4$, where the structure of $\EndTn[4]$ is more complicated due to the additional maps of rank $7$, as well as some degenerate behaviour for $n \leq 3$ (where there are in some sense too few maps for the general behaviour to emerge). We will return to these special cases in Section~\ref{sec:degenerate}. 
\end{remark}

\section{Idempotents and regularity}
\label{sec:idempotent}
Throughout this section we assume that $n \geq 5$. In particular, this means that the set $\EndTn \setminus \AutTn$ of singular endomorphisms is equal to the set of endomorphisms of rank at most three:
$$\EndTn \setminus \AutTn= \{\pte: (t,e) \in P_n\} = \set{\pte \mid t,e\in \TXn \mbox{ with } t^3 = t \text{ and } te=et=e^2=e}.$$

\subsection{The left action of the endomorphism monoid on the singular part}
\label{sec:leftact}
For $(t,e) \in P_n$ the maps $\ptce$, $\pee$ and $\ptctc$ are all closely related to the map $\pte$.
Indeed their images are all contained in that of $\pte$ and by Corollary~\ref{cor:multiplication in ETn} they are all in $\EndTn\pte$.
Lemma~\ref{lem:equiv same type and products equal} allows us to define a notation facilitating this identification.

\begin{defn}\label{def:operations +,-,0}
Let $\alpha \in \EndTn \setminus \AutTn$. Then we define $\alpha^+$, $\alpha^-$ and $\alpha^0$ as:
\begin{itemize}
\item $\alpha^+ =\gamma \alpha$ for any $\gamma\in \singn$ of even type;
\item $\alpha^- =\gamma \alpha$ for any $\gamma\in \singn$ of non-permutation type; and
\item $\alpha^0 =\phid \alpha$ (that is, $\gamma\alpha$ for $\gamma\in \singn$ of trivial type).
\end{itemize}
Additionally, for $X\subseteq \singn$ and $\dagger\in \set{+,-,0}$, we define $X^\dagger$ to be the set $\set{\alpha^\dagger \colon \alpha\in X}$.
\end{defn}

\begin{rem}\label{rem:expression +,-,0}
Under this definition, we can now see that if $\alpha\in \singn$, then $\gamma\alpha \in \set{\alpha, \alpha^+, \alpha^-, \alpha^0}$ for any $\gamma\in \EndTn$ and thus $\EndTn\alpha = \set{\alpha, \alpha^+, \alpha^-, \alpha^0}$. 
Additionally, if we write $\alpha$ as $\pte$, then we have that
$$ \gamma\alpha = \begin{cases} \alpha = \pte &\mbox{if $\gamma$ has group type or odd type,} \\ \alpha^+ = \ptce &\mbox{if $\gamma$ has even type,} \\\alpha^- = \pee &\mbox{if $\gamma$ has non-permutation type,}\\ \alpha^0 = \ptctc &\mbox{if $\gamma$ has trivial type}. \end{cases}$$
\end{rem}

We now show how each of the sets involved in the decomposition of $\EndTn$ behave under the operations mapping $\alpha$ to $\alpha^+$, $\alpha^-$ or $\alpha^0$.

\begin{lem}\label{lem:operations on sets}
Let  $\alpha\in \singn$. Then
\begin{enumerate}
\item $1 = {\rm rank}(\alpha^0) = {\rm rank} (\alpha^-) \leq {\rm rank}(\alpha^+) \leq {\rm rank} (\alpha) \leq 3$;
\item $\alpha^+ = \alpha$ if and only if ${\rm rank}(\alpha) \leq 2$ if and only if $\alpha\in E_2\cup C\cup E_1$;
\item $\alpha^- = \alpha$ if and only if $\alpha^0 = \alpha$ if and only if ${\rm rank}(\alpha)= 1$ if and only if $\alpha\in E_1$;
\item $ E_3^+ \cup A^+ \subseteq  E_2 = E_2^+$, \; $B^+ \subseteq C = C^+$, \;and $E_1^+=E_1$;
\item $E_3^- \cup A^- \cup B^- \cup C^- \subseteq E_1 \setminus \set{\phid} = E_2^-$, \;and $E_1^-=E_1$;
\item $B^0\cup C^0\subseteq E_1\setminus \set{\phid} = (E_1\setminus \set{\phid})^0$, \;and $A^0 = E_3^0 = E_2^0 = \set{\phid} = \set{\phid}^0$.
\end{enumerate}

Consequently, we have $(\singn)^+ = E_2 \cup C \cup E_1$. and $(\singn)^- = (\singn)^0 = E_1$.
\end{lem}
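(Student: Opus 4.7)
My plan is to first convert Definition~\ref{def:operations +,-,0} into explicit formulae by applying Corollary~\ref{cor:multiplication in ETn}(4). Writing $\alpha=\pte$ with $(t,e)\in P_n$, picking any witness $\gamma=\phi_{u,f}$ of the appropriate type (even, non-permutation, or trivial) and multiplying yields the concrete identities
\[
\alpha^{+}=\phi_{t^{2},e},\qquad \alpha^{-}=\pee,\qquad \alpha^{0}=\ptctc,
\]
which are precisely the formulae previewed in Remark~\ref{rem:expression +,-,0}. All six listed statements then reduce to bookkeeping of which set each of these three expressions lies in, where $(t,e)$ ranges over $P_n$. The two essential inputs are Lemma~\ref{lem:facts on U}\ref{enum:fact on t2 in Sn is id}, which says that $t\in\Sn\cap U_n$ forces $t^{2}=\id$ (and conversely, $t\in\TXn\priv\Sn$ forces $t^{2}\in\TXn\priv\Sn$, since $\Sn$ is closed under square roots within $U_n$), together with Lemma~\ref{lem:facts on U}\ref{enum:fact on t2 idpt}, which gives that $t^{2}$ is idempotent and $(t^{2},e)\in P_n$ in all cases.

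For parts (1)--(3) I would argue via images. We have $\im(\alpha^{0})=\{t^{2}\}$, $\im(\alpha^{-})=\{e\}$ and $\im(\alpha^{+})\subseteq\{t^{2},e\}\subseteq\{t,t^{2},e\}=\im(\alpha)$, which yields the rank chain in (1); the bound $\mathrm{rank}(\alpha)\leq 3$ follows from $n\geq 5$ (cf.\ Theorem~\ref{thm:endomorphisms}). For (2), Lemma~\ref{lem:unicity of writing} gives $\alpha^{+}=\alpha$ iff $t^{2}=t$, which by Corollary~\ref{cor:constraints of endomorphisms with e,t} characterises $\mathrm{rank}(\alpha)\leq 2$, that is, $\alpha\in E_{2}\cup C\cup E_{1}$. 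For (3), again by Lemma~\ref{lem:unicity of writing}, $\alpha^{-}=\alpha$ iff $t=e$, and $\alpha^{0}=\alpha$ iff $t^{2}=t=e$; since $e=e^{2}$ both conditions force $t=t^{2}=e$, i.e.\ $\alpha\in E_{1}$.

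For parts (4)--(6) I run through each of the six pieces of the partition in Lemma~\ref{lem:partition of EndTn}. For (4): if $\alpha\in E_{3}\cup A$ then $t\in\Sn$ so $t^{2}=\id$ and $\alpha^{+}=\phi_{\id,e}$ with $e\neq\id$, hence $\alpha^{+}\in E_{2}$; if $\alpha\in B$ then $t\in\TXn\priv\Sn$ and $t\neq t^{2}\neq e\neq\id$, so $t^{2}\in\TXn\priv\Sn$ is idempotent and $\alpha^{+}=\phi_{t^{2},e}\in C$; for $\alpha\in E_{2}\cup C\cup E_{1}$ we have $t=t^{2}$ by rank considerations, so $\alpha^{+}=\alpha$, establishing the equalities $E_{2}^{+}=E_{2}$, $C^{+}=C$, $E_{1}^{+}=E_{1}$. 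For (5): in each of $E_{3},A,B,C,E_{2}$ we have $e$ idempotent and $e\neq\id$, so $\alpha^{-}=\pee\in E_{1}\priv\{\phid\}$; to match $E_{2}^{-}=E_{1}\priv\{\phid\}$ exactly, note $\phi_{\id,e}\in E_{2}$ maps to $\pee$ under the $-$ operation, giving the reverse inclusion. For (6): since $t\in\Sn$ forces $t^{2}=\id$ and hence $\alpha^{0}=\phid$, this handles $A,E_{3},E_{2}$; for $B$ and $C$ we have $t\in\TXn\priv\Sn$, so $t^{2}\in\TXn\priv\Sn$ is an idempotent $\neq\id$, giving $\alpha^{0}\in E_{1}\priv\{\phid\}$; and for $\beta=\pee\in E_{1}\priv\{\phid\}$, $\beta^{0}=\pee$ shows the $0$-operation is the identity on that set.

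The consequence at the end is immediate by unioning the six cases: taking $+$ over $\singn$ lands in $E_{2}\cup C\cup E_{1}$, and this is exact because $E_{2}^{+}=E_{2}$, $C^{+}=C$, $E_{1}^{+}=E_{1}$; taking $-$ or $0$ over $\singn$ lands in $E_{1}\priv\{\phid\}\cup\{\phid\}=E_{1}$, and equality follows from $E_{1}^{-}=E_{1}^{0}=E_{1}$. There is no genuine obstacle here; the only real care needed is the case split between $t\in\Sn$ (forcing $t^{2}=\id$) versus $t\notin\Sn$ (forcing $t^{2}$ to be a nontrivial idempotent), since this is what distinguishes the degenerate behaviour of $A,E_{3},E_{2}$ from the generic behaviour of $B,C,E_{1}\priv\{\phid\}$ under the $0$-operation.
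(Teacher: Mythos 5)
Your proposal is correct and follows essentially the same route as the paper's proof: both reduce everything to the explicit formulae $\alpha^+=\phi_{t^2,e}$, $\alpha^-=\phi_{e,e}$, $\alpha^0=\phi_{t^2,t^2}$ of Remark~\ref{rem:expression +,-,0} and then carry out the case analysis over the partition of Lemma~\ref{lem:partition of EndTn}, using Lemma~\ref{lem:facts on U} together with the uniqueness statement of Lemma~\ref{lem:unicity of writing} (which the paper invokes implicitly when deducing, for instance, $t=t^2$ from $\phi_{t,e}=\phi_{t^2,e}$). Your image-based phrasing of part (1) and your explicit remark that $t\notin\mathcal{S}_n$ forces $t^2\notin\mathcal{S}_n$ are merely cosmetic variations on the paper's bookkeeping.
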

\begin{proof}
Let $\alpha = \pte\in \EndTn$ for some $\perte\in P_n$.
Throughout this proof, we use the description of $\alpha^+$, $\alpha^-$ and $\alpha^0$ given in Remark \ref{rem:expression +,-,0}.

Part (1) follows immediately from Remark \ref{rem:expression +,-,0}. For part (2),
suppose first that $\alpha = \alpha^+$, which means that $\pte = \ptce$, so that $t=t^2$ and ${\rm rank}(\alpha) \leq 2$.
If $t\in \Sn$, this forces $t=\id$, and thus $\alpha = \phid[e]\in E_2$. 
Otherwise, $t\in \TXn\setminus \Sn$ and either $t\neq e$, which means that $\alpha\in C$, or $t = e$ and then $\alpha = \pee\in E_1$. 
Conversely, if $\alpha\in E_2\cup C\cup E_1$, then $t=t^2$ from which we have that $\alpha$ has rank at most $2$ and $\alpha^+ = \ptce = \pte = \alpha$.

For part (3), if $\alpha =\alpha^-$, then we have that $\pte = \pee$ which forces $t=e$, while if $\alpha = \alpha^0$, we require $\pte = \ptctc$ which implies that $t=t^2=e$. In both cases, this gives us that $\alpha\in E_1$, that is, has rank $1$. 
Conversely, if $\alpha\in E_1$, then $t = t^2=e$ and $\alpha^- = \pee =\alpha$ while $\alpha^0 = \ptctc = \pee =\alpha$.

For part (4), notice that we have already seen that $E_2^+=E_2$, $C^+=C$ and $E_1^+=E_1$. If $\alpha=\pte\in E_3\cup A$ so that $t \in \Sn$, then $\alpha^+ = \ptce = \phid[e]\in E_2$. On the other hand, if $\alpha = \pte\in B$, we have that $t^2 \neq  e$ and then $\alpha^+ = \ptce\in C$.

Similarly, for part (5) if $\alpha=\pte \in E_3 \cup A \cup B \cup E_2 \cup C$ then $e \neq \id$ and so $\alpha^- = \pte[e,e] \in E_1 \priv \{\phid\}$, whilst for each idempotent $e \neq \id$ we have that $\pte[\id, e]^{-} = \pte[e,e]$ and so $E_2^{-} = E_1 \priv \{\phid\}$. The remaining equality is given by part (3). Likewise, for part (6) if $\alpha=\pte \in B \cup C  \cup E_1 \priv \{\phid\}$ then $t^2 \neq \id$ and so $\alpha^0 = \pte[t^2,t^2] \in E_1 \priv \{\phid\}$, whilst for $\alpha=\pte \in E_3 \cup A  \cup E_2 \cup \{\phid\}$ we have $t^2 =\id$ and so $\alpha^0 = \phid$.
\end{proof}

The inclusions given in the previous lemma, as well as the representation of the action of the maps $\alpha\mapsto \alpha^x$ for $x\in \set{+,-,0}$ can be seen in Figure \ref{fig:J-order} alongside the depiction of the $\GJ$-order of the monoid $\EndTn$. 
The latter will be given explicitly in Proposition \ref{prop:principal ideals}.

\subsection{Idempotents and regular elements}

The set of idempotents of $\EndTn$, which we denote by $E=E(\EndTn)$,  has very nice properties. In particular, $E$ is a band. Before stating Proposition~\ref{cor:Ei are right-zero},  we give a little semigroup terminology and background. 
If we have a semilattice $Y$, then a semigroup $S$ decomposed into disjoint subsemigroups $S_i$, $i\in Y$, is a {\em semilattice $Y$ of the subsemigroups $S_i$} if $S_iS_j\subseteq S_{i\wedge j}$ for all $i,j\in I$.
It is a fact (see, for example, \cite[Theorem 4.4.1]{howie:1995}) that any band is a semilattice $Y$ of rectangular bands
$D_i$, $i\in I$, where a band is {\em rectangular} if it satisfies the identity $x=xyx$. A {\em right zero (resp. left identity)} for a semigroup $S$ is an element
$e$ such that $ae=e$ (resp.  $ea=a$) for all $a\in S$. A semigroup consisting entirely of right zeroes (equivalently, entirely of left identities) is called a  {\em right-zero semigroup}; clearly any such semigroup is a band, in fact, a special kind of rectangular band. In what follows, we use the term {\em chain} for a linearly ordered set
(which is, of course, a semilattice).

\begin{prop}\label{cor:Ei are right-zero}
\begin{enumerate} 
\item Each element of $E_3$ is a left identity of $\singn$.
\item Each element of $E_1$ is a right zero of $\singn$.
\item The minimal ideal of $\EndTn$ is $E_1$.
\item The set of all singular idempotents $E_3\cup E_2\cup E_1$ forms a left ideal of $\EndTn$.
\item The set of all idempotents $E=\{ \psid\}\cup E_3\cup E_2\cup E_1$  is a band, and forms a chain of right-zero semigroups.  
\end{enumerate}
\end{prop}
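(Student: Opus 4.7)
The plan is to tackle the five parts in order, drawing primarily on the multiplication table (Corollary~\ref{cor:multiplication in ETn}), the classification of idempotents (Lemma~\ref{lem:idempotents of EndTn}), and the $\alpha \mapsto \alpha^{+,-,0}$ formalism of Remark~\ref{rem:expression +,-,0} combined with Lemma~\ref{lem:operations on sets}.

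Parts (1) and (2) reduce to direct case analyses of Corollary~\ref{cor:multiplication in ETn}(4). For (1), if $\alpha = \pte \in E_3$ and $\gamma = \puf \in \singn$, then $t \in \Sn \priv \An$ and $e \neq \id$ place us in the first case of the product formula, so $\alpha \gamma = \puf = \gamma$. For (2), with $\alpha = \pee \in E_1$ and $\gamma = \puf \in \singn$, each of the four cases of $\gamma \alpha$ yields $\pee$: cases~1 and 3 directly, and cases~2 and 4 after using $e = e^2$ to collapse both components to $e$.

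For (3), I first show $E_1$ is a two-sided ideal. Left-absorption combines Corollary~\ref{cor:multiplication in ETn}(2) (if the left factor is an automorphism) with part (2) (if it is singular); right-absorption is a short computation from Corollary~\ref{cor:multiplication in ETn}(3,4), using that $e^g$ remains idempotent and that $u^2$ is idempotent for $u \in U_n$ (Lemma~\ref{lem:facts on U}(\ref{enum:fact on t2 idpt})). Minimality is then immediate: given any ideal $I$ and any $\alpha \in I$, for each $\pee \in E_1$ we have $\alpha \pee = \pee \in I$, so $E_1 \subseteq I$. Part (4) is a direct application of Remark~\ref{rem:expression +,-,0}: for $\alpha \in \singn$ and $\beta \in E_3 \cup E_2 \cup E_1$, we have $\alpha \beta \in \{\beta, \beta^+, \beta^-, \beta^0\}$, and Lemma~\ref{lem:operations on sets}(2)--(6) confirms these all lie in $E_2 \cup E_1 \subseteq E_3 \cup E_2 \cup E_1$; the case $\alpha \in \AutTn$ is covered by Corollary~\ref{cor:multiplication in ETn}(2).

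For (5), idempotency of elements of $E = \{\psid\} \cup E_3 \cup E_2 \cup E_1$ is Lemma~\ref{lem:idempotents of EndTn}, so it remains to tabulate the sixteen products $E_i E_j$ and confirm the chain-of-right-zero structure. Products involving $\psid$ are trivial; products $E_3 \cdot X$ collapse to $X$ by part (1); products $Y \cdot E_1$ collapse to $E_1$ by part (2); these together establish that both $E_3$ and $E_1$ are right-zero. The remaining cases $E_2 \cdot E_2$, $E_2 \cdot E_3$, $E_1 \cdot E_2$, $E_1 \cdot E_3$ are immediate applications of Corollary~\ref{cor:multiplication in ETn}(4): the left factor forces either the second or third case of the formula (depending on whether its first component is $\id$ or a non-trivial idempotent), producing an output in the lower-rank component, and restricting $E_2 \cdot E_2$ gives right-zero behaviour there as well. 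Reading off, $E_i E_j \subseteq E_{\min(i,j)}$ under the chain $\{\psid\} > E_3 > E_2 > E_1$, so $E$ is a chain of right-zero semigroups, hence a band. The only challenge is organisational — no deep argument arises once the multiplicative lemmas are in hand.
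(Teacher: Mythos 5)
Your proof is correct and takes essentially the same route as the paper's: parts (1), (2) and (5) by direct case analysis of the multiplication in Corollary~\ref{cor:multiplication in ETn}, and part (4) via the $\alpha\mapsto\alpha^{+},\alpha^{-},\alpha^{0}$ formalism of Remark~\ref{rem:expression +,-,0} together with Lemma~\ref{lem:operations on sets}. The only (equally valid) variation is in (3), where you prove minimality directly by observing that $\alpha\pee=\pee$ places $E_1$ inside every ideal, whereas the paper instead notes that $E_1$ is an ideal which, being a right-zero semigroup, has no proper ideals.
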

\begin{proof} Throughout, we bear in mind that $\psid$ is the identity of the monoid $\EndTn$. 

The fact that elements of $E_3$ are left identities for $\singn$ is immediate from 
Corollary~\ref{cor:multiplication in ETn}, as is the fact that elements of $E_1$ are right zeroes for $\singn$. Consideration of ranks immediately gives that $E_1$ is an ideal of $\EndTn$. Since $E_1$  is a right-zero semigroup it has no proper ideals, hence is the minimal ideal of $\EndTn$.  This verifies
(1)--(3).

For (4), let $\beta=\phi_{t,e}\in E_3\cup E_2\cup E_1$. Using Remark~\ref{rem:expression +,-,0} and Corollary~\ref{cor:multiplication in ETn}, we know that for any $\alpha\in \EndTn$
we have that\[\alpha\beta\in \{ \beta,\beta^+,\beta^-,\beta^0\}= \{ \phi_{t,e},\phi_{t^2,e},\phi_{e,e},\phi_{t^2,t^2}\}.\]
If $\beta\in E_3\cup E_2$ then $t^2=\id$; if $\beta\in E_1$ then $t=t^2=e$. It follows 
 that $\alpha\beta\in E\setminus\set{\psid}$ and hence $E_3 \cup E_2 \cup E_1$ is a left ideal. Moreover, for $\alpha, \beta \in E_k$ where $k=1,2,3$ it is straightforward to check that $\alpha \beta = \beta$ and hence each of these sets forms a right-zero semigroup and indeed $E$ is a band. The only thing remaining to check to see that (5) holds is that 
$E_2E_3\subseteq E_2$, and this is a now familiar calculation (also inherent in the above). 
\end{proof}

We have shown that the idempotents of $\EndTn$ form a {\em left regular band}, that is, a band that satisfies the identity $xyx=yx$. 

We now have all the tools necessary to describe  the regular elements of $\EndTn$. We recall at this point our assumption that  $n\geq 5$.

\begin{prop}\label{prop:regular elements}
 The set of all regular elements of $\EndTn$ is $\AutTn\cup E(\EndTn)$. Furthermore, this is a proper subsemigroup of $\EndTn$. In particular, $\EndTn$ is not regular.
\end{prop}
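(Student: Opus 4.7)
The plan is to establish the two inclusions $\AutTn \cup E(\EndTn) \subseteq \mathrm{Reg}(\EndTn) \subseteq \AutTn \cup E(\EndTn)$ and then verify the closure and properness claims. The forward containment is immediate: every element of the group $\AutTn$ is regular, and any idempotent $e$ is regular via $e = e\cdot e \cdot e$.

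The substance of the proof is the reverse inclusion. Recalling from Lemmas~\ref{lem:partition of EndTn} and~\ref{lem:idempotents of EndTn} that, for $n \geq 5$, the decomposition
\[\EndTn = \AutTn \cup E_3 \cup A \cup B \cup E_2 \cup C \cup E_1\]
is disjoint and that exactly $A \cup B \cup C$ captures the non-idempotent singular elements, it suffices to show no element of $A \cup B \cup C$ is regular. I would argue by contradiction: assume $\alpha \in A \cup B \cup C$ and $\alpha = \alpha\beta\alpha$ for some $\beta \in \EndTn$, and set $\delta = \beta\alpha$. Since $\alpha$ has rank at most $3$, any such $\delta$ lies in $\singn$, and $\alpha = \alpha\delta$. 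If $\alpha \in A$ (even type), then Definition~\ref{def:operations +,-,0} gives $\alpha\delta = \delta^+$, so by Lemma~\ref{lem:operations on sets} we conclude $\alpha \in (\singn)^+ = E_2 \cup C \cup E_1$, which is disjoint from $A$; a contradiction. If instead $\alpha \in B \cup C$ (non-permutation type), then $\alpha\delta = \delta^- \in (\singn)^- = E_1$, again disjoint from $B \cup C$. Both cases contradict the assumed membership of $\alpha$.

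For the subsemigroup and properness claims, I note that $\AutTn$ is a group and $E(\EndTn)$ is a band by Proposition~\ref{cor:Ei are right-zero}, so only mixed products of an automorphism and an idempotent need attention. Corollary~\ref{cor:multiplication in ETn} yields $\psi_g\phi_{t,e} = \phi_{t,e} \in E(\EndTn)$; and a short computation exploiting the same identity shows
\[(\phi_{t,e}\psi_g)^2 = \phi_{t,e}(\psi_g\phi_{t,e})\psi_g = \phi_{t,e}\phi_{t,e}\psi_g = \phi_{t,e}\psi_g,\]
so this product is likewise idempotent (and singular whenever $\phi_{t,e}$ is). Properness is witnessed by the explicit singular non-idempotent maps in $A$, $B$, and $C$ provided by Example~\ref{ex:nonemptyparts}. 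The main obstacle is really only in finding the right formulation of the non-regularity argument; once the $+,-,0$ operations and the disjointness of the pieces in Lemma~\ref{lem:partition of EndTn} are in place, the contradiction is essentially a one-line observation.
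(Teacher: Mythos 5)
Your proposal is correct and takes essentially the same approach as the paper: both prove non-regularity of $A\cup B\cup C$ from the fact that left multiplication by a map of even or non-permutation type drops rank (you phrase this via the operations of Definition~\ref{def:operations +,-,0} and the sets $(\singn)^+=E_2\cup C\cup E_1$, $(\singn)^-=E_1$ of Lemma~\ref{lem:operations on sets}, while the paper argues that the idempotent left identity $\alpha\gamma$ would have to share the rank of $\alpha$ but cannot), and both obtain the subsemigroup claim from Proposition~\ref{cor:Ei are right-zero} together with the multiplication in Corollary~\ref{cor:multiplication in ETn}. Your explicit computation that $\phi_{t,e}\psi_g$ is idempotent simply unpacks the paper's appeal to Lemma~\ref{lem:type stable under mult by automorphism}.
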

\begin{proof}
Clearly if $\alpha\in \EndTn$ is an automorphism or an idempotent, then it is a regular element.
The fact that $\AutTn\cup E(\EndTn)$ forms a (proper) subsemigroup is immediate from Lemma~\ref{lem:type stable under mult by automorphism}
and Proposition~\ref{cor:Ei are right-zero}. 

Conversely, let $\alpha\in \singn$ be a regular element of $\EndTn$ so that $\alpha\gamma\alpha = \alpha$ where $\gamma\in \EndTn$. Suppose for contradiction that $\alpha$ is not idempotent (that is, $\alpha\in A\cup B\cup C$) then it follows from Corollary \ref{cor:multiplication in ETn} part (2) that $\gamma \in \singn$. 
Note that $\alpha\gamma$ must be an idempotent left identity for $\alpha$ and hence of the same rank as $\alpha$. If $\alpha\in A\cup B$, then $\alpha$ has rank $3$; but since $\gamma \in \singn$ and $\alpha$ has even or non-permutation type, 
the rank of $\alpha\gamma$ is no greater than $2$, contradicting that $\alpha\gamma$ has the same rank as $\alpha$. Similarly, if $\alpha\in  C$, then $\alpha$ has rank $2$; but since $\alpha$ has non-permutation type, the rank of $\alpha\gamma$ is $1$.
The result follows.
\end{proof}

\begin{rem}\label{rem:regularcases} We will see in Section~\ref{sec:degenerate} that the {\em only} values of $n\in\mathbb{N}$ such that $\mathcal{E}_n$ is regular are
$n=1,2$.\end{rem}

\section{Generators and presentations}
\label{sec:gens}

Throughout this section we assume that $n \geq 5$. We demonstrate a minimal set of generators for $\mathcal{E}_n$ and, with respect to this set of generators, a presentation for $\mathcal{E}_n$.
Recall that for $\alpha \in \EndTn$, the orbit of $\alpha$ is $\alpha\AutTn$. All elements of the same orbit have the same rank, and so we shall speak of orbits of a given rank. We shall say that the orbit of $\alpha$  is {\em essential} if $\alpha$ cannot be generated by  elements of strictly greater rank. 

\begin{remark}
\label{rem:essential}
Clearly the orbit of each rank $3$ element, that is, each element in $E_3\cup A\cup B$, is essential. Meanwhile, there is a unique essential orbit of rank $1$, namely the orbit of $\phi_{\id,\id}$. Indeed,
any $\phi_{e,e}$, where $e\neq \id$ may be decomposed as a product $\gamma\phi_{\id,e}$ where $\gamma\in B$ and $\phi_{\id,e} \in E_2$ each have rank greater than $1$, and so the orbit is not essential. The orbit of $\phi_{\id,\id}$ is simply $\{ \phi_{\id,\id}\}$. 
\end{remark}

\begin{proposition}
Let $\Sigma \subseteq \EndTn$. Then $\Sigma$ is a generating set for $\EndTn$ if and only if $\Sigma$ contains a generating set for the group $\AutTn$ together with at least one element of each essential orbit. The minimal size of a generating set is therefore $3 + r_3(n) +r_2(n)$, where $r_3(n)$ denotes the number of essential orbits (equivalently, orbits)  of rank $3$ in $\EndTn$ and $r_2(n)$ denotes the number of essential orbits of rank $2$ in $\EndTn$.
\end{proposition}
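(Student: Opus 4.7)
The plan is to prove necessity and sufficiency separately and then compute the cardinality.

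For \emph{necessity}, the first claim---that $\Sigma \cap \AutTn$ generates $\AutTn$---follows from the fact that $\singn$ is an ideal of $\EndTn$ (Lemma~\ref{lem:partition of EndTn}), so any expression for an automorphism in $\langle\Sigma\rangle$ can only involve automorphism factors. For the second claim, suppose for contradiction that $\Sigma \cap O = \emptyset$ for some essential orbit $O$, pick $\alpha \in O$, and write $\alpha = \beta_1 \cdots \beta_m$ in $\langle\Sigma\rangle$. Let $j^*$ be the largest index with $\beta_{j^*}$ singular; then $\psi := \beta_{j^*+1} \cdots \beta_m \in \AutTn$ and, setting $\gamma = \beta_1 \cdots \beta_{j^*-1}$, Corollary~\ref{cor:multiplication in ETn}(4) yields $\gamma \beta_{j^*} \in \{\beta_{j^*}, \beta_{j^*}^+, \beta_{j^*}^-, \beta_{j^*}^0\}$, so $\alpha$ lies in the orbit of $\beta_{j^*}$ (whence $\beta_{j^*} \in O \cap \Sigma$, contradiction) or of $\beta_{j^*}^x$ for some $x \in \{+,-,0\}$. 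For $x \in \{+,-\}$, I would replace $\gamma$ by any $\delta$ of the required type having rank $3$ (drawn from $A(n)$ or $B(n)$, which are nonempty for $n \geq 5$ by Example~\ref{ex:nonemptyparts}) to rewrite $\alpha = \delta \beta_{j^*} \psi$. Combined with Lemma~\ref{lem:operations on sets}---which ensures $\mathrm{rank}(\alpha) < \mathrm{rank}(\beta_{j^*})$ whenever $\beta_{j^*}^x \neq \beta_{j^*}$---this exhibits $\alpha$ as a product of elements of strictly greater rank, contradicting the essentiality of $O$.

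For \emph{sufficiency}, assume $\Sigma$ contains generators of $\AutTn$ and a representative of each essential orbit. Proceeding by downward induction on rank $k$, I would show $\{\alpha : \mathrm{rank}(\alpha) \geq k\} \subseteq \langle\Sigma\rangle$: the base $k = n^n$ is immediate, and for each smaller rank $k \in \{3,2,1\}$, every rank-$k$ orbit is either essential (so it has a representative in $\Sigma$) or non-essential (so some representative is, by definition, a product of elements of rank $> k$, all already in $\langle\Sigma\rangle$ by the inductive hypothesis). In both cases, combining with $\AutTn \subseteq \langle\Sigma\rangle$ sweeps the whole orbit $\alpha\AutTn$ into $\langle\Sigma\rangle$. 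The cardinality count then follows: $\AutTn \cong \Sn$ is non-cyclic for $n \geq 5$ and so requires at least two generators, and admits $2$-element generating sets; adding one generator for the unique essential singleton orbit $\{\phid\}$ (Remark~\ref{rem:essential}), one for each of the $r_3(n)$ rank-$3$ orbits (all essential), and one for each of the $r_2(n)$ essential rank-$2$ orbits yields $3 + r_3(n) + r_2(n)$ as the minimum.

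The main obstacle is the subcase $x=0$ of the necessity argument: the only element of trivial type is $\phid$ itself, so no higher-rank witness is available for this operation. However, $x=0$ forces $\mathrm{rank}(\alpha) = 1$, so for $O$ essential one must have $O = \{\phid\}$ (the other rank-$1$ orbits being non-essential by Remark~\ref{rem:essential}), and then $\gamma = \phid$, i.e., $\beta_1 \cdots \beta_{j^*-1} = \phid$ is a strictly shorter expression for $\phid$ in $\langle\Sigma\rangle$. A secondary induction on the length $m$ of the expression closes the loop: the base case $m=1$ forces $\phid = \beta_1 \in \Sigma$, and the inductive step descends to this strictly shorter subexpression.
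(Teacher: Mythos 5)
Your proposal is correct and takes essentially the same route as the paper: for necessity you isolate the last singular factor of a word for $\alpha$, classify the prefix action via $\gamma\beta \in \set{\beta, \beta^+, \beta^-, \beta^0}$, and contradict essentiality by swapping in a rank-$3$ same-type witness from $A(n)$ or $B(n)$ (exactly the paper's substitution of a representative $\gamma'$ of an essential orbit in $A$), then prove sufficiency by downward induction on rank and obtain the count $2 + 1 + r_3(n) + r_2(n)$. The only cosmetic differences are that the paper settles the rank-$3$ case by image-uniqueness (Lemma~\ref{lem:unicity of writing}) and dismisses $\phid$ by observing directly that it is indecomposable, where you instead run the uniform $\set{+,-,0}$ case analysis supplemented by a length induction for the trivial-type corner case; both are sound.
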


\begin{proof}
Suppose first that $\Sigma$ is a generating set. It follows immediately from the multiplication given in Corollary \ref{cor:multiplication in ETn} that $\Sigma$ must contain a generating set for $\AutTn$. Clearly $\AutTn$ forms a single essential orbit, and $\Sigma$ contains at least one element of this orbit. By Remark \ref{rem:essential} we have that $\{\phid\}$ is the unique essential orbit of rank $1$, and since it is clear from Corollary \ref{cor:multiplication in ETn} that $\phi_{\id,\id}$ cannot be decomposed into a product of elements of $\EndTn \setminus \{ \phi_{\id,\id}\}$ it follows that any generating set must contain
$\phi_{\id,\id}$.

Assume that the rank of $\alpha\in \EndTn$ is $2$ or $3$, and the   orbit of $\alpha$ is essential. Since $\Sigma$ is a generating set we must have that $\alpha = \theta_1 \cdots \theta_m$ for some $m$ where $\theta_i \in \Sigma$. Since $\alpha$ is singular, it is clear that at least one of the generators $\theta_i$ is singular. Moreover, since elements of $\AutTn$ act as left identities on the singular elements, we may assume without loss of generality that $\theta_1, \ldots, \theta_{l}$ are all singular and $\theta_{l+1}, \ldots, \theta_m \in \AutTn$. Let $\gamma =\theta_1 \cdots \theta_{l-1}$ so that $\gamma\theta_l$ is in the orbit of $\alpha$ and in particular has the same rank as $\alpha$. We aim to show that the generator $\theta_l$ is in the orbit of $\alpha$ too.

 If $\alpha$ has rank $3$, then it follows from the facts that $3=|\im \alpha|=|\im \gamma\theta_l|\leq |\im \theta_l|\leq 3$ and  $\im\gamma\theta_l \subseteq \theta_l$  that 
$\im \gamma\theta_l=\im \theta_l$ and by Lemma~\ref{lem:unicity of writing} we have  $\gamma\theta_l =\theta_l$.

Suppose now  that $\alpha$ and hence $\gamma\theta_l$ have rank $2$. 
If $\theta_l$ has rank $3$ then as  $\gamma\theta_l$ has rank $2$ we must have that $\gamma\theta_l=\gamma'\theta_l$ for any representative $\gamma'$ of an essential orbit lying in $A$. But this would contradict our assumption that the orbit of $\alpha$ is essential. Thus
$\theta_l$ has rank $2$ and as above we have  $\gamma\theta_l =\theta_l$.

Conversely, suppose that $\Sigma$ is any set with the given properties. By Remark \ref{rem:essential} we have that $\phi_{\id,\id}\in \Sigma$. It is clear that we can generate all elements of $\AutTn$ (since we have a generating set for this finite group), and hence also all elements of the essential orbits (by definition). Since each rank $3$ orbit is essential this shows that we may generate all elements of rank at least $3$, which in turn allows us to generate all elements of rank $2$ (each element in a non-essential orbit by definition being obtained as products of elements of order greater than $2$). Finally, for each idempotent $e \in \TXn$ with $e \neq \id$ we have that $\pte[\id, e]$ is an endomorphism of rank $2$, and so for all $\gamma \in B$ of rank $3$ we have that $\gamma\pte[\id, e] = \pee$, which shows that all elements of rank $1$ can be generated.

Since $\AutTn$ is isomorphic to $\Sn$ it is clear that a minimal generating set for $\AutTn$ has two elements. Since all rank $3$ orbits are essential, whilst $\phid\AutTn = \{\phid\}$ is the unique rank $1$ essential orbit it then follows that a minimal generating set has $2 + r_3(n) +r_2(n) +1$ elements.
\end{proof}

We now give a sketch of how one might enumerate a minimal generating set for $\EndTn$. Clearly this amounts to finding a formula for 
$r_2(n)$ and $r_3(n)$. 
Each singular element  corresponds to a pair $(t,e)\in P_n$ and hence to a labelled coloured directed graph (one colour denoting the action of $t$ and one of $e$), with specific properties. 
Hence, each {\em orbit} of a singular element corresponds to an {\em unlabelled} coloured directed graph - and it must have precisely the following properties:
 \begin{itemize}
\item There are $n$ unlabelled vertices.
\item All edges are coloured either blue (corresponding to the action of $t$ and its conjugates) or red (corresponding to the action of $e$ and its conjugates).
\item There is exactly one red edge and exactly one blue edge leaving each vertex.
\item Looking only at the blue edges, recalling the constraint $t^3=t$  only, one can split the graph into components of the following two forms:
\begin{itemize}
\item a vertex with a loop, possibly with edges coming into it and
\item a two-cycle, possibly with edges coming in to the vertices.
\end{itemize}
\item Looking now at the red edges, recalling the constraints $te=e=et=e^2$, the red edges map all vertices to a subset of the blue-loop-vertices in such a way that:
\begin{itemize}
\item there is a red-loop at each vertex in the subset and 
\item all vertices of a blue component are mapped by a red edge to the same red-loop-vertex. 
\end{itemize}
 \end{itemize}
 
Counting the total number of orbits therefore amounts to counting the number of (non-isomorphic) unlabelled coloured directed graphs of this kind. Rank $3$ orbits correspond to the graphs containing at least one two-cycle. Rank $2$ orbits correspond to graphs where there is no two-cycle but the red and blue edges do not completely coincide.

\smallskip 
Having found minimal generating sets for $\EndTn$, we now give a presentation using those generators. First, we recall some details concerning  presentations for monoids: we must quotient free monoids by congruences.

Let $X$ be an alphabet  and denote by $X^*$ the free monoid on~$X$. 
The elements of $X^*$ are {\em words} over $X$, that is, finite products of elements of $X$, which may be referred to as {\em letters}.
We allow the empty product which we will denote by $1$.  Then $X^*$ becomes a monoid under concatenation of words, with $1$ being the identity. 
A {\em congruence} on $X^*$ is an equivalence relation that is compatible with the binary operation: if $\rho$ is such a congruence then
$X^*/\rho:=\{ [w]:w\in X^*\}$ becomes a monoid under $[v][w]=[vw]$ with identity $[1]$.  If~  $R\subseteq X^*\times X^*$, we denote by $R^\sharp$ the congruence on $X^*$ generated by $R$, that is, the smallest congruence on $X^*$ containing $R$. It is easy to see that $R^\sharp$ is the reflexive transitive closure of the set
$\{ (xvy, xwy): x,y\in X^*, (v,w)\in R\mbox{ or }(w,v)\in R\}$.   We say a monoid $S$ has \emph{monoid} \emph{presentation} $\langle X:R\rangle$ if~${S\cong X^*/R^\sharp}$ or, equivalently, if there is an epimorphism  $\theta:X^*\to S$  with kernel $R^\sharp=\ker \theta$. Here   the congruence $\ker\theta$
is defined by
$\ker\theta:=\{ (v,w):v\theta=w\theta\}$.  If $\theta$ is such an epimorphism, we say $S$ has \emph{presentation $\langle X:R\rangle$ with respect to $\theta$}.  A relation $(w_1,w_2)\in R$ will usually be displayed as an equation: $w_1=w_2$.  

Now let $ \Pi\cup \Sigma$ be a (minimal) generating set for $\EndTn$ consisting of a generating set $\Pi$ for $\AutTn$ together with a set $\Sigma$ containing exactly one element $\pte$ of each remaining essential orbit. We define
\[X_{\Pi}=\{ q_{g}: \psi_g\in \Pi\}\mbox{ and }X_{\Sigma}=\{ p_{t,e}: \phi_{t,e}\in \Sigma\}.\]
We choose and fix a single element of $X_{\Sigma}$ of odd (even, non-permutation, trivial) type and denote it by
$p^{od}$ ($p^{ev},p^{np},p^{tr}$) (supressing here the subscripts, and where $p^{tr}=p_{\id,\id}$).
Note that, by definition, $p^{od}\theta \in E_3$ has rank $3$. Since each rank $3$ orbit is essential, it is clear that we may also select $p^{ev}$ and $p^{np}$ so that their images under $\theta$ have rank $3$. With abuse of terminology, we may refer
to $p_{t,e}$ having (odd/even/non-permutation/trivial)   
type and rank $k$ (where $k \in \set{1,2,3}$) if $\phi_{t,e}$ does.  It follows from Remark~\ref{rem:essential} that $p^{tr}$ is the unique generator of rank $1$.

Let $X = X_{\Pi}\cup X_{\Sigma}$ and $\theta:X^*\rightarrow \EndTn$ be defined by
\[p_{t,e}\theta=\phi_{t,e}\mbox{ and } q_g\theta=\psi_g.\]
Let $R_\Pi$ consist of  
 any set of relations that yield $\langle X_{\Pi}:R_{\Pi}\rangle$ is a presentation of the symmetric group $\Sn$. We do not concern ourselves here with the `best' form of $R_{\Pi}$, as this is a well-trodden route. We aim to describe a set of relations $R_{\Pi \cup \Sigma}$ on $X$ that, when taken together with the relations $R_\Pi$, yields a presentation for $\EndTn$. In what follows, the underlying assumptions are that $q_g$ varies over $X_{\Pi}$ and $p_{t,e}$ varies over $X_{\Sigma}$. Define $F_{\Pi}(n)$  to be the smallest number 
 such that any element of $\mathcal{S}_n$ has a representation as a word of length at most $F_{\Pi}(n)$  in the generators $X_{\Pi}$. Let $R_{\Pi \cup \Sigma}$ consist of the following relations, where $m\leq F_\Pi(n)$:
\begin{itemize}\item[(R1)] $q_gp_{t,e}=p_{t,e}$ 
\item[(R2)]  $p_{t,e}q_{g_1}\ldots  q_{g_{m}}=p_{t,e}$ where $\phi_{t,e}\psi_{g_1}\ldots 
\psi_{g_{m}}=\phi_{t,e}$

\item[(R3)]  $p_{t_1,e_1}p_{t_2,e_2}q_{g_1}\ldots  q_{g_{m}}=p_{u_1,f_1}p_{u_2,f_2}$ where 
$\phi_{t_1,e_1}\phi_{t_2,e_2}\psi_{g_1}\cdots \psi_{g_{m}}=\phi_{u_1,f_1}\phi_{u_2,f_2}$ does not belong to an essential orbit
\item[(R4)] $p_{t,e}p_{u,f} = p_{t',e'} p_{u,f}$ where $p_{t,e}$ and $p_{t',e'}$ have the same type
\item[(R5)] $p^{od}p_{t,e} = p_{t,e}$  
\item[(R6)] $p^{ev}p_{t,e} = p_{t,e}$ where  $p_{t,e}$ has rank $2$
\item[(R7)] $p^{ev}p^{ev}p_{t,e}=p^{ev}p_{t,e}$
\item[(R8)] $p^{np}p^{ev}p_{t,e}=p^{np}p_{t,e}$,  $p^{ev}p^{np}p_{t,e}=p^{np}p_{t,e}$ and $p^{np}p^{np}p_{t,e}=p^{np}p_{t,e}$ 
\item[(R9)] $p^{ev}p^{tr}=p^{tr}$, $p^{np}p^{tr}=p^{tr}$ and   $p^{tr}p^{tr}=p^{tr}$ 
\item[(R10)]  $p^{tr}p^{np}p_{t,e}=p^{np}p_{t,e}$ and
 $p^{tr}p_{u,f} = p^{tr}$   where $p_{u,f}$ has odd or even type.
\end{itemize}
We now let \[R=R_\Pi\cup R_{\Pi\cup \Sigma}.\]

\begin{thm}\label{thm:pres} The monoid $\EndTn$ has presentation $\langle X:R\rangle$ with respect to $\theta$. 
\end{thm}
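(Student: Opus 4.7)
The plan is to establish the two standard inclusions for a monoid presentation: first, that $R\subseteq\ker\theta$ (whence $R^\sharp\subseteq\ker\theta$); and second, that there exists a set $N\subseteq X^*$ of normal forms such that $\theta|_N:N\to\EndTn$ is a bijection and every word of $X^*$ is $R^\sharp$-equivalent to some member of $N$. Together with the surjectivity of $\theta$, these imply $R^\sharp=\ker\theta$, as required.

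Soundness $R\subseteq\ker\theta$ is a routine verification. The relations $R_\Pi$ hold by assumption on the presentation of $\Sn$. Relation (R1) is an instance of Corollary~\ref{cor:multiplication in ETn}(2); (R2) records precisely that $\phi_{t,e}\psi_{g_1\cdots g_m}=\phi_{t,e}$ when $g_1\cdots g_m$ stabilises $(t,e)$, using Corollary~\ref{cor:multiplication in ETn}(3); and (R3)-(R10) translate the type-dependent multiplication of Corollary~\ref{cor:multiplication in ETn}(4), cast through the type description of Lemma~\ref{lem:equiv same type and products equal}, with rank and type stratification supplied by Lemma~\ref{lem:operations on sets} and Remark~\ref{rem:expression +,-,0}.

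For the normal-form side, I would fix, for each $g\in\Sn$, a canonical $R_\Pi$-representative $w_g\in X_\Pi^*$ of length at most $F_\Pi(n)$, and, for each $p_{t,e}\in X_\Sigma$, a transversal of the stabiliser of $\phi_{t,e}$ in $\AutTn$. Then $N$ comprises: (i) each $w_g$ (representing $\psi_g\in\AutTn$); (ii) each $p_{t,e}w_h$ with $p_{t,e}\in X_\Sigma$ and $h$ in the chosen transversal (representing orbit elements of essential orbits, uniqueness following from Lemma~\ref{lem:unicity of writing}); and (iii) for each element $\alpha$ of a non-essential orbit, one chosen length-two word $p^\ast p_{t',e'}$ with $p^\ast\in\{p^{ev},p^{np},p^{tr}\}$ and $p_{t',e'}\in X_\Sigma$, dictated by the $^+$, $^-$, $^0$ expression of $\alpha$ from Remark~\ref{rem:expression +,-,0} and Lemma~\ref{lem:operations on sets}. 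By construction $\theta|_N$ is a bijection onto $\EndTn$.

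The crux is to show every word of $X^*$ is $R^\sharp$-equivalent to an element of $N$, which I would prove by induction on the number of letters of $X_\Sigma$ in the word. For zero letters, $R_\Pi$ normalises. For one letter, (R1) discards leading automorphism letters, and then $R_\Pi$ together with (R2) reduces the trailing automorphism word to a transversal representative. For two or more letters, (R4) replaces the leftmost singular letter by its type-representative $p^\ast$; then (R5) deletes odd-type prefixes, (R6)-(R8) collapse consecutive even- or non-permutation-type prefixes, and (R9)-(R10) absorb trivial-type prefixes; whenever the product of the first two singular letters together with the trailing automorphism represents a non-essential orbit element, (R3) rewrites the word as the chosen length-two representative in $N$. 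The main obstacle will be verifying that this rewriting strategy always terminates at an element of $N$ rather than stalling at some non-normal intermediate word: this reduces to a finite, mechanical case analysis over the possible types and ranks of consecutive $X_\Sigma$-letters, the admissible reductions being completely determined by Lemmas~\ref{lem:equiv same type and products equal} and~\ref{lem:operations on sets}. The case analysis is long but requires no new ideas beyond those already developed in Sections~\ref{sec:lowrank} and~\ref{sec:idempotent}.
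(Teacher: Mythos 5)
Your overall strategy is the same as the paper's: verify $R\subseteq\ker\theta$, then rewrite every word into a canonical short form (your induction on the number of $X_\Sigma$-letters is precisely the content of Lemmas~\ref{lem:noting} and~\ref{lem:notingrefined}, which reduce any word to $p_{t,e}q_{g_1}\cdots q_{g_m}$ or $p_{t_1,e_1}p_{t_2,e_2}q_{g_1}\cdots q_{g_m}$ according to whether the value lies in an essential orbit). However, there is a genuine gap in item (iii) of your normal-form set $N$. A tailless length-two word $p^\ast p_{t',e'}$ evaluates under $\theta$ to exactly one of $\phi_{t'^2,e'}$, $\phi_{e',e'}$ or $\phi_{t'^2,t'^2}$, so the image of your collection of length-two words has at most $3\,\lvert X_\Sigma\rvert$ elements. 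By contrast, the set of elements lying in non-essential orbits is a union of \emph{full} $\AutTn$-orbits: for instance it contains $\phi_{e,e}$ for \emph{every} idempotent $e\neq\id$ of $\TXn$, and since $\Sigma$ contains only one representative per essential orbit, the second coordinates $e'$ occurring among your generators form a tiny subset of these. Concretely, if every chosen generator of non-permutation type has second coordinate $c_1$ (a legitimate choice of $\Sigma$), then $\phi_{c_2,c_2}$ is not the $\theta$-value of any word $p^\ast p_{t',e'}$, so your claim ``by construction $\theta|_N$ is a bijection onto $\EndTn$'' fails, and your rewriting procedure has no element of $N$ at which to terminate for such values. The root of the error is the parenthetical ``dictated by the $^+,^-,^0$ expression of $\alpha$'': the correct identity, from Remark~\ref{rem:expression +,-,0} and Corollary~\ref{cor:multiplication in ETn}(3), is $\alpha=(\phi_{t',e'}\psi_h)^\dagger=(\phi_{t',e'})^\dagger\psi_h$, so a trailing automorphism word is unavoidable.

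The repair is to take normal forms $p^\ast p_{t',e'}w_h$ with $h$ ranging over a suitable transversal, i.e.\ exactly the two-singular-letters-plus-tail shape of Lemma~\ref{lem:noting}; with this correction your argument realigns with the paper's, where essential values are forced into the one-letter form (uniqueness of the generator per essential orbit pins down the leading letter, and the tails are reconciled using $R_\Pi$ and (R2)), while non-essential values are handled via (R3). Two further points deserve care in your write-up. First, the step you defer as a ``finite, mechanical case analysis'' — showing that a word with two singular letters whose value is essential collapses to one singular letter, and that otherwise the rewriting lands on the designated representative — is not routine bookkeeping: it is the rank-and-type analysis of Lemma~\ref{lem:notingrefined}, including the contradiction arguments (e.g.\ a rank-$2$ value with both singular letters of rank $3$ contradicts essentiality, and the rank-$1$ essential case requires the specific relations (R9)--(R10)). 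Second, once your normal forms carry tails, you must check that (R3) genuinely suffices to identify two tailed two-letter words with equal non-essential value (distinct second letters $p_{t_2,e_2}\neq p_{u_2,f_2}$ from \emph{different} essential orbits can yield the same product), rather than assuming the reduction stalls only at type/rank bookkeeping; this is where your appeal to Lemmas~\ref{lem:equiv same type and products equal} and~\ref{lem:operations on sets} needs to be supplemented by an explicit argument in the style of the paper's final paragraph.
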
 
\begin{proof} The morphism $\theta$ takes $X$ to a set of generators of $\EndTn$, so is an epimorphism. Let us denote $R^\sharp$ by $\sim$. Corollary~\ref{cor:multiplication in ETn} gives that
$\sim~\subseteq \ker\theta$; it remains to show the converse.

\begin{lem}\label{lem:noting}  
Let $w\in X^*$ and suppose that $w$ contains at least one letter from $X_{\Sigma}$. Then 
\[w\sim p_{t,e}q_{g_1}\cdots q_{g_m}\mbox{ or }w\sim p_{t_1,e_1}p_{t_2,e_2}q_{g_1}\cdots q_{g_m}\]
for some $p_{t,e},p_{t_1,e_1},p_{t_2,e_2}\in X_\Sigma$ and $q_{g_1}\cdots q_{g_m}\in X_{\Pi}$, for some $m\geq 0$.
\end{lem}
\begin{proof} By (R1) we have that $w\,\sim\, p_{t_1,e_1}\ldots p_{t_k,e_k}q_{g_1}\cdots q_{g_m}$
where $m\geq 0$. Either $k=1$, so that the first case holds. 
Otherwise,  using (R4) and (R5) we have that
$p_{t_1,e_1}\ldots p_{t_k,e_k}\,\sim\, p^1\ldots p^{k-1}p_{t_k,e_k}$ where $p^i\in \{ p^{ev},p^{np}, p^{tr}\}$ for
$1\leq i\leq k-1$. Finally,  (R7)--(R8) give that the second case holds.
\end{proof}

\begin{lem}\label{lem:notingrefined}  
Let $w\in X^*$ and suppose that $w$ contains at least one letter from $X_{\Sigma}$. Then the orbit of $w\theta$ is essential if and only if 
\[w\sim p_{t,e}q_{g_1}\cdots q_{g_m}\]
for some $p_{t,e}\in X_\Sigma$ and $q_{g_1}\cdots q_{g_m}\in X_{\Pi}$, for some $m\geq 0$.
\end{lem}
\begin{proof} If $w\sim p_{t,e}q_{g_1}\cdots q_{g_m}$ then it is clear the orbit of $w\theta$ is essential. Conversely, suppose that 
 the orbit of $w\theta$ is essential and the second case of Lemma~\ref{lem:noting} holds. Then 
$w\sim p_{t_1,e_1}p_{t_2,e_2}q_{h_1}\cdots q_{h_k}$.  We proceed by examining the rank of $w\theta$ to show that $p_{t_1,e_1}p_{t_2,e_2}\,\sim\, p_{t,e}$ for some $p_{t,e}$. 
Note that it follows from (R4) together with the fact that $p^{od}\theta, p^{ev}\theta$ and  $p^{np}\theta$ are assumed to have rank $3$ that we may assume without loss of generality that either $p_{t_1,e_1}\theta$ has rank $3$ or else $p_{t_1, e_1} = p^{tr}$.

If the rank of $w\theta$ is $3$, then this forces $\phi_{t_1,e_1}$ to be of odd type, and 
$p_{t_1,e_1}p_{t_2,e_2}\,\sim\, p_{t_2,e_2}$.

If the rank of $w\theta$ is $2$, then either $\phi_{t_1,e_1}$ is of odd type, and we proceed as above. Or, $\phi_{t_1,e_1}$ is of even type and
$\phi_{t_2,e_2}$ has rank $2$ or $3$. If $\phi_{t_2,e_2}$ has rank $2$, then $\phi_{t_1,e_1}\phi_{t_2,e_2}=\phi_{t_2,e_2}$ and also by (R4) and 
(R6) we have $p_{t_1,e_1} p_{t_2,e_2}\sim p_{t_2,e_2}$.  Otherwise, $\phi_{t_2,e_2}$ has rank $3$, and since by assumption $\phi_{t_1, e_1}$ has rank $3$, this contradicts the fact that $w\theta$ lies in an essential orbit.

If $w\theta$ has rank $1$ we note that, since this orbit is assumed to be essential and by Remark \ref{rem:essential} there is a unique essential orbit of rank $1$, we must have $w\theta = \phid$. Our aim then is to demonstrate that $p_{t_1, e_1}p_{t_2, e_2} \sim p^{tr} = p_{\id, \id}$. 
We shall consider all combinations of types for elements $p_{t_1, e_1}$ and $p_{t_2, e_2}$, showing that either $p_{t_1, e_1}p_{t_2, e_2} \sim p^{tr}$ as required, or else obtaining a contradiction to the fact the orbit of $w\theta$ is essential of rank $1$ (meaning that such a product does not lie in the unique rank $1$ essential orbit in the first place). 
If $\pte[t_1,e_1]$ is of odd type, then we proceed as in the previous cases. If $\pte[t_2,e_2]$ is of trivial type, then $\pte[t_1,e_1] \pte[t_2,e_2] = \pte[t_2,e_2]$ and by (R9) $p_{t_1,e_1} p_{t_2,e_2} \sim p_{t_2,e_2}$. We therefore assume in what follows that $p_{t_1,e_1}$ is not of odd type, and $p_{t_2,e_2}$ is not of trivial type. 
Notice in particular that this means that $p_{t_2, e_2}$ has rank $2$ or $3$ (since the only essential orbit of rank $1$ is the unique orbit of trivial type) and $e_2$ is therefore not the identity element (since if $e_2=\id$ we require that $t=\id$, and hence the element has rank $1$). 
If $\pte[t_1,e_1]$ is of even type then $\pte[t_1,e_1] \pte[t_2,e_2]$ has rank $2$, contradicting our assumption that $w\theta$ has rank $1$. If $\pte[t_1,e_1]$ is of non-permutation type, then $\pte[t_1,e_1] \pte[t_2,e_2] = \pte[e_2, e_2] \neq \phid$, contradicting the fact the orbit of $w\theta$ is essential. 
Finally we are left with the case that $\pte[t_1,e_1]$ has trivial type. Then if $\pte[t_2,e_2]$ has odd or even type we find $\pte[t_1,e_1] \pte[t_2,e_2] = \pte[t_2^2 ,t_2^2] = \phid$, giving $p_{t_1,e_1} p_{t_2,e_2} \sim p_{id,id}$ by (R10), whilst if $\pte[t_2,e_2]$ has non-permutation type we have $\pte[t_1,e_1] \pte[t_2,e_2] = \pte[t_2^2 ,t_2^2]  \neq \phid$ contradicting that the orbit of $w\theta$ is essential.
\end{proof}

We proceed with the proof of Theorem~\ref{thm:pres}. Let $w_1,w_2\in X^*$ and suppose that $w_1\theta=w_2\theta=\gamma$. 

If the rank of $\gamma$ is $n^n$, then all letters of $w_1$ and $w_2$ lie in $X_{\Pi}$ and $w_1\,\sim\, w_2$ using the relations in R$_0$.

Suppose now that the rank of $\gamma$ is no greater than $3$, so that each of $w_1,w_2$ has a letter from $X_{\Sigma}$.

 If
$w_1\sim p_{t,e}q_{g_1}\cdots q_{g_m}$ as in 
Lemma~\ref{lem:noting}, then $\gamma$ lies in an essential orbit. Then Lemma~\ref{lem:notingrefined} gives that $w_2\, \sim p_{u,f}q_{h_1}\cdots q_{h_k}$ for some
$k\geq 0$. Since we have chosen a unique generator corresponding to each essential orbit, we must have that $p_{t,e}=p_{u,f}$. 
It immediately follows that $\phi_{t,e}\psi_{g_1}\cdots \psi_{g_m}=\phi_{t,e}\psi_{h_1}\cdots \psi_{h_k}$,
giving that $\phi_{t,e}\psi_{g_1}\ldots \psi_{g_m}\psi_{h_k^{-1}}\ldots \psi_{h_1^{-1}}=\phi_{t,e}$. In
$\AutTn$ we have $\psi_{g_1}\ldots \psi_{g_m}\psi_{h_k^{-1}}\ldots \psi_{h_1^{-1}}=\psi_{d_1}\ldots \psi_{d_\ell}$ where
$\ell\leq n!$, so that using R$_0$ we have $q_{g_1}\ldots q_{g_m}q_{h_k^{-1}}\ldots q_{h_1^{-1}}\,\sim\, q_{d_1}\ldots q_{d_\ell}$.
Now
\[\begin{array}{rcll}
w_1&\sim& p_{t,e}q_{g_1}\cdots q_{g_m}\\
&\sim&p_{t,e}q_{g_1}\cdots q_{g_m}q_{h_k^{-1}}\ldots q_{h_1^{-1}}q_{h_1}\ldots q_{h_k}\\
&\sim&  p_{t,e}q_{d_1}\ldots q_{d_\ell}q_{h_1}\ldots q_{h_k}\\
&\sim& p_{t,e}q_{h_1}\ldots q_{h_k}&\mbox{ using (R2)}\\
&\sim& w_2,\end{array}\]
as required. 

The case where $\gamma$ does not lie in an essential orbit yields that $w_1\sim p_{t_1,e_1}p_{t_2,e_2}q_{g_1}\cdots q_{g_m}$ 
and $w_2\sim p_{u_1,f_1}p_{u_2,f_2}q_{h_1}\cdots q_{h_k}$, where $\phi_{t_1,e_1}\phi_{t_2,e_2}$ does not lie in an essential orbit. An
argument, similar to that displayed above, this time using (R3), gives that $w_1\,\sim\, w_2$ and finishes the proof.
\end{proof}

In Section~\ref{sec:related} we pose some related questions concerning presentations for
$\EndTn$ and other related monoids and semigroups.

\section{Green's relations and ideal structure}
\label{sec:Green}
Throughout this section we again assume that $n \geq 5$ so that the set $\EndTn \setminus \AutTn$ of singular endomorphisms is equal to the set of all endomorphisms of the form $\pte$ each having rank at most three. Here we turn our attention to Green's relations $\GR$, $\GL$, $\GD$, $\GH$ and $\GJ$.  It is well known that in a finite monoid, the $\GR$-class and the $\GL$-class of the identity coincide, and are hence equal to the $\GH$-class of the identity, which is the group of invertible elements (see for example \cite[Corollary 1.5]{St16}). In the case of $\EndTn$ is is easy to see this directly, using considerations of rank.

We first show that the $\GL$-classes of $\EndTn$ are singletons except for elements of $\AutTn$, which from the above form a single $\GL$-class.

\begin{prop}\label{prop:GL rel}
Let $\alpha, \beta\in \EndTn$.
\begin{enumerate}
\item If $\alpha \in \singn$ then the principal left ideal generated by $\alpha$ is 
$\EndTn \alpha = \{\alpha, \alpha^+, \alpha^-, \alpha^0\}$.
\item $\alpha\GL\beta$ if and only if $\im\alpha = \im \beta$ if and only if $\alpha = \beta$ or $\alpha,\beta\in \AutTn$.
\end{enumerate}
\end{prop}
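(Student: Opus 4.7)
My plan for (1) is to apply Remark \ref{rem:expression +,-,0} directly. That remark already records $\gamma\alpha \in \{\alpha, \alpha^+, \alpha^-, \alpha^0\}$ for every $\gamma \in \EndTn$, which gives the inclusion $\EndTn\alpha \subseteq \{\alpha, \alpha^+, \alpha^-, \alpha^0\}$. The reverse inclusion I would then dispense with by noting that $\alpha = \psid\alpha \in \EndTn\alpha$ and that $\alpha^+$, $\alpha^-$, $\alpha^0$ are, by Definition \ref{def:operations +,-,0}, explicitly of the form $\gamma\alpha$ for a $\gamma \in \singn$ whose type is realized, under the standing hypothesis $n \geq 5$, by Example \ref{ex:nonemptyparts}.

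For (2) I would prove the two biconditionals separately. For $\alpha\GL\beta \Rightarrow \im\alpha = \im\beta$, the plan is to invoke the standard observation that $\im(\gamma\delta)\subseteq \im\delta$ for any $\gamma,\delta\in\EndTn$; writing $\alpha = \gamma\beta$ and $\beta = \delta\alpha$ as witnesses for the $\GL$-relation then forces two-way containment of images. For $\im\alpha = \im\beta \Rightarrow (\alpha=\beta \text{ or } \alpha,\beta\in\AutTn)$, I would argue by rank: an automorphism has rank $n^n$ while any singular endomorphism has rank at most $3$, and since $n^n > 3$ for $n\geq 5$, a common image forces $\alpha$ and $\beta$ both into $\AutTn$ or both into $\singn$. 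In the latter case Lemma \ref{lem:unicity of writing} delivers $\alpha=\beta$. Finally, $(\alpha=\beta \text{ or } \alpha,\beta\in\AutTn) \Rightarrow \alpha\GL\beta$ is trivial in the first alternative, while in the second it follows from the fact that $\AutTn$ is the group of units of the monoid $\EndTn$ and is therefore contained in a single $\GL$-class.

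I do not anticipate a serious obstacle: this proposition is essentially an organizational packaging of material already in place. Part (1) is little more than a reformulation of Remark \ref{rem:expression +,-,0}, and part (2) couples it with the image-uniqueness of singular endomorphisms (Lemma \ref{lem:unicity of writing}) and a straightforward rank bound. The only subtlety worth flagging is ensuring that the singular elements $\gamma$ of each required type exist to realize $\alpha^+$, $\alpha^-$ and $\alpha^0$ as left multiples of $\alpha$; this is precisely what the standing assumption $n\geq 5$ and Example \ref{ex:nonemptyparts} provide.
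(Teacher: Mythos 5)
Your proposal is correct and takes essentially the same route as the paper's own proof: part (1) is Remark~\ref{rem:expression +,-,0} combined with Example~\ref{ex:nonemptyparts} to realize each type, and part (2) is the same cyclic chain of implications using $\im(\gamma\delta)\subseteq\im\delta$, the fact that the group of units $\AutTn$ lies in a single $\GL$-class, and Lemma~\ref{lem:unicity of writing} for the singular case. The only cosmetic difference is that you settle the automorphism case by comparing ranks ($n^n$ versus at most $3$), where the paper observes directly that a common image equal to all of $\TXn$ forces both maps into $\AutTn$; both justifications are sound.
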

\begin{proof}
(1) By Example \ref{ex:nonemptyparts} we know that $\EndTn$ contains an element of each type, and so Remark \ref{rem:expression +,-,0} applies to give that the principal left ideal is indeed $\EndTn \alpha = \{\alpha, \alpha^+, \alpha^-, \alpha^0\}$.

(2) Clearly if $\alpha=\beta$, then $\alpha\GL\beta$. 
On the other hand, if  $\alpha,\beta\in \AutTn$, then as remarked above $\alpha\GL\beta$.

Conversely, suppose that $\alpha\GL\beta$, so that $\alpha=\gamma\beta$ and $\beta=\delta\alpha$ for some $\gamma,\delta\in \EndTn$.
It follows that $\im\alpha = \im\beta$.
Clearly if this image is the whole of $\TXn$, then $\alpha,\beta\in \AutTn$.
Otherwise $\alpha, \beta\in \singn$, so that $\alpha=\pte$ and $\beta=\puf$ for some $\perte, \perte[u,f]\in P_n$ which gives us that $\alpha = \beta$ by Lemma~\ref{lem:unicity of writing}.
\end{proof}

\begin{remark}
Recall that if $X$ is one of $A$,$B$ or $C$, then $X$ is the union of orbits $X_{t,e}$ where $\perte\in P_n$ is such that $\pte\in X$. Moreover, it is easy to see that
\begin{align*}
\left(X_{t,e}\right)^+ &= \set{ (\pte[t^g,e^g])^+\colon g\in \Sn} 
 = \set{\pte[(t^g)^2,e^g]\colon g\in \Sn} \\
&= \set{\ptce\psi_g\colon g\in \Sn} = \ptce \AutTn = \pte^+ \AutTn,
\end{align*}
and we simply write this set as $X_{t,e}^+$. 
Notice that Lemma ~\ref{lem:operations on sets} gives $C_{t,e}^+ \subseteq C$ whilst $A_{t,e}^+ \not\subseteq A$ and $B_{t,e}^+ \not\subseteq B$.
\end{remark}

Proposition~\ref{prop:GL rel} shows that $\GL$ that is surprisingly restrictive. However, the $\GR$-classes of $\EndTn$ are much larger, as given by the following.

\begin{prop}\label{prop:GR rel}
Let  $\alpha\in \EndTn$. The principal right ideal of $\EndTn$ generated by $\alpha$ is equal to:
$$\alpha \EndTn = \begin{cases} 
	\EndTn & \mbox{ if } \alpha \in \AutTn,\\
	\EndTn \setminus \AutTn & \mbox{ if } \alpha \in E_3,\\
	A_{t,e} \cup E_2 \cup C \cup E_1   & \mbox{ if } \alpha =\pte \in A,\\
	E_2 \cup C \cup E_1  & \mbox{ if } \alpha \in E_2,\\
	B_{t,e} \cup E_1   & \mbox{ if } \alpha = \pte \in B,\\
	C_{t, e} \cup E_1  & \mbox{ if } \alpha = \pte \in C, or\\
	E_1  & \mbox{ if } \alpha \in E_1.
	\end{cases}$$
Consequently, $\AutTn$, $E_3$, $E_2$ and $E_1$ each consist of a single $\GR$-class, whilst each remaining $\GR$-class is the orbit of an element in $A$, $B$ or $C$.
\end{prop}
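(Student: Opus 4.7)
The plan is to determine $\alpha\EndTn$ by case analysis using the decomposition of $\EndTn$ from Lemma~\ref{lem:partition of EndTn} into $\AutTn \cup E_3 \cup A \cup E_2 \cup B \cup C \cup E_1$, and then to read off the $\GR$-classes from the computed right ideals. Throughout I would write $\alpha\EndTn = \alpha\AutTn \cup \alpha\singn$: the former contributes (for singular $\alpha$) the full orbit of $\alpha$ by Lemma~\ref{lem:type stable under mult by automorphism}, and the latter is governed purely by the \emph{type} of $\alpha$ via the multiplication table in Corollary~\ref{cor:multiplication in ETn}(4).

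First I would dispense with the easy cases. If $\alpha \in \AutTn$, invertibility gives $\alpha\EndTn = \EndTn$. If $\alpha \in E_3$, Proposition~\ref{cor:Ei are right-zero} says $\alpha$ is a left identity on $\singn$, so $\alpha\singn = \singn$, and since $\singn$ is an ideal containing $\alpha$ this yields $\alpha\EndTn = \singn$. Dually, if $\alpha \in E_1$ then $\alpha$ is a right zero of $\singn$ and $\alpha\AutTn \subseteq E_1$, so $\alpha\EndTn = E_1$. In the remaining cases $\alpha = \pte \in A \cup E_2 \cup B \cup C$, the piece $\alpha\AutTn$ is precisely $A_{t,e}$, $B_{t,e}$, $C_{t,e}$, or a subset of $E_2$, respectively. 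For $\alpha\singn$ I would apply Corollary~\ref{cor:multiplication in ETn}(4) according to type: if $\alpha$ has non-permutation type (cases $B$, $C$) then every $\alpha\puf$ collapses to $\pee[f] \in E_1$, and as $(u,f)$ ranges over $P_n$ every element of $E_1$ is produced (for example by taking $u = f$), giving $\alpha\EndTn = B_{t,e} \cup E_1$ or $C_{t,e}\cup E_1$; if $\alpha$ has even type (cases $A$, $E_2$) then every product is $\pte[u^2,f]$, whose first coordinate is idempotent by Lemma~\ref{lem:facts on U}(\ref{enum:fact on t2 idpt}), so a brief sub-case analysis on whether $u^2 = \id$ and whether $u^2 = f$ confines the result to $E_2 \cup C \cup E_1$, while conversely each prescribed element of $E_2$, $C$, or $E_1$ is attained by taking $\puf$ to be $\pte[\id, f]$, $\pte[u,f]$, or $\pee[f]$ respectively.

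The $\GR$-class structure then drops out immediately: since $\alpha \GR \beta$ is equivalent to $\alpha\EndTn = \beta\EndTn$, the four ``monolithic'' sets $\AutTn$, $E_3$, $E_2$, $E_1$ each collapse to a single $\GR$-class, while inside $A \cup B \cup C$ two elements are $\GR$-related precisely when they share an orbit, as the residual contribution $E_2\cup C\cup E_1$, $E_1$, or $E_1$ is constant across the whole set but the orbit component separates different orbits. The main obstacle I expect is the bookkeeping in the even-type case: pinning down exactly when $\pte[u^2,f]$ lands in $E_2$, $C$, or $E_1$ requires careful cross-reference with Corollary~\ref{cor:constraints of endomorphisms with e,t}, and one must remember that $\phid$ sits inside $E_1$ so that it is neither overlooked on the right-hand side nor doubly counted. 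Every other case is a one-line application of the multiplication rules.
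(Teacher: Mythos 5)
Your proposal is correct and takes essentially the same route as the paper: both decompose $\alpha\EndTn$ as $\alpha\AutTn \cup \alpha(\singn)$, obtain the orbit component from right multiplication by automorphisms, and identify $\alpha(\singn)$ via the type-governed multiplication of Corollary~\ref{cor:multiplication in ETn}(4) --- the paper cites Remark~\ref{rem:expression +,-,0} and the set identities $(\singn)^+ = E_2\cup C\cup E_1$, $(\singn)^- = (\singn)^0 = E_1$ of Lemma~\ref{lem:operations on sets}, which your inline subcase analysis on $u^2$ and explicit witnesses simply re-derive. Your use of the left-identity/right-zero facts of Proposition~\ref{cor:Ei are right-zero} for the $E_3$ and $E_1$ cases is a cosmetic variation on the same computation.
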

\begin{proof}
First note that if $\alpha\in \AutTn$, then $\alpha \EndTn = \EndTn$ while if $\alpha = \pte\in \singn$, then by Corollary~\ref{cor:multiplication in ETn} and Remark~\ref{rem:expression +,-,0}, we have that $\alpha \EndTn = \set{\pte[t^g, e^g] \colon g\in \AutTn} \cup \set{\gamma^x \colon \gamma\in\singn}$ where $\gamma^x$ is one of $\gamma$, $\gamma^+$, $\gamma^-$, or $\gamma^0$, depending on the type of $\alpha$.
For example, if $\alpha=\pte\in B$, then $\alpha \EndTn = \pte \AutTn \cup \set{\gamma^-\colon \gamma\in \singn} = B_{t,e}\cup \set{\pee[f]\colon f^2=f\in \TXn} = B_{t,e}\cup E_1$.

 Similar reasoning demonstrates that the principal right ideal generated by $\alpha$ is as stated in each of the remaining cases.
The description of the $\GR$-classes is then immediate.
\end{proof}

\begin{corollary}
\label{prop:GD,GJ,GH rel}
In $\EndTn$, we have that $\GH = \GL \subseteq \GD = \GJ = \GR$.
\end{corollary}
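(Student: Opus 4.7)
The corollary follows directly by combining Propositions~\ref{prop:GL rel} and~\ref{prop:GR rel} with standard general facts about Green's relations. My plan is to handle each claimed (in)equality in turn, using the explicit descriptions of $\GL$- and $\GR$-classes already established, together with the identities $\GH = \GL \cap \GR$ and $\GD = \GL \vee \GR$ and the finiteness fact $\GD = \GJ$ recalled in Section~\ref{sec:intro}.

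First I would verify the key containment $\GL \subseteq \GR$. By Proposition~\ref{prop:GL rel}, the $\GL$-classes of $\EndTn$ are the singletons $\{\alpha\}$ for $\alpha\in \singn$, together with the single class $\AutTn$. Every singleton is trivially contained in an $\GR$-class (since $\GR$ is reflexive), and Proposition~\ref{prop:GR rel} shows that $\AutTn$ is itself a single $\GR$-class. Hence every $\GL$-related pair is $\GR$-related.

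With this in hand, $\GH = \GL$ is immediate from $\GH = \GL \cap \GR$: intersecting $\GL$ with something it is already contained in returns $\GL$. For the equality $\GD = \GR$, recall that $\GD = \GR \vee \GL$, and for finite (indeed, any) semigroups $\GD = \GR \circ \GL = \GL \circ \GR$. Using $\GL \subseteq \GR$ we obtain $\GD = \GR \circ \GL \subseteq \GR \circ \GR = \GR$, while the reverse inclusion $\GR \subseteq \GD$ always holds. Finally, $\GJ = \GD$ is a standard consequence of $\EndTn$ being finite, as noted in the introduction.

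There is no significant obstacle here: the real content sits in Propositions~\ref{prop:GL rel} and~\ref{prop:GR rel}, and the corollary is a short bookkeeping argument using the standard lattice-theoretic relations $\GH = \GL \wedge \GR$ and $\GD = \GL \vee \GR$. If anything, the only point worth stating carefully is the observation that makes $\GL \subseteq \GR$ work, namely that $\AutTn$ is simultaneously a single $\GL$-class and a single $\GR$-class, so that the one non-singleton $\GL$-class causes no difficulty.
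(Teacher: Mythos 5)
Your proof is correct and follows essentially the same route as the paper: both derive $\GL \subseteq \GR$ from Propositions~\ref{prop:GL rel} and~\ref{prop:GR rel}, then obtain $\GH = \GL \cap \GR = \GL$ and $\GD = \GL \circ \GR = \GR$, with $\GD = \GJ$ from finiteness. Your explicit verification of $\GL \subseteq \GR$ via the class descriptions, and of $\GD \subseteq \GR$ via $\GR \circ \GL \subseteq \GR \circ \GR = \GR$, simply spells out what the paper leaves as ``one can directly see.''
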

\begin{proof}
From Propositions \ref{prop:GL rel} and \ref{prop:GR rel}, one can directly see that $\GL \subseteq \GR$, and therefore $\GH = \GL \cap \GR = \GL$ while 
$\GD = \GL \circ \GR = \GL\vee \GR=\GR$. 
Additionally, since $\EndTn$ is finite, we have that $\GD = \GJ$ (see \cite[Proposition II 1.4]{howie:1995}).
\end{proof}

Corollary~\ref{prop:GD,GJ,GH rel} determines the $\GJ$-relation, and hence when two principal ideals coincide. Nevertheless, it is worthwhile recording their form.

\begin{prop}\label{prop:principal ideals}
Let  $\alpha\in \EndTn$. Then the principal two-sided ideal generated by $\alpha$ is
$$\EndTn\alpha \EndTn = \begin{cases}\EndTn & \mbox{ if } \alpha\in \AutTn,\\
\EndTn \setminus \AutTn &\mbox{ if } \alpha\in E_3,\\
A_{t,e}  \cup E_2 \cup  C \cup E_1 &\mbox{ if } \alpha = \pte \in A,\\
E_2 \cup  C \cup E_1 &\mbox{ if } \alpha\in E_2,\\
B_{t,e} \cup C_{t^2,e}  \cup E_1 &\mbox{ if }\alpha = \pte\in B,\\
C_{t,e}  \cup E_1 &\mbox{ if }\alpha = \pte \in C,\\
E_1 &\mbox{ if }\alpha\in E_1.\\
\end{cases}$$
\end{prop}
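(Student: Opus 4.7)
The plan is to combine the description of the principal left ideal of a singular element from Proposition~\ref{prop:GL rel}(1) with the principal right ideals computed in Proposition~\ref{prop:GR rel}. Specifically, for $\alpha \in \singn$ we have $\EndTn \alpha = \{\alpha, \alpha^+, \alpha^-, \alpha^0\}$, and hence
$$\EndTn\,\alpha\,\EndTn \;=\; \alpha\EndTn \,\cup\, \alpha^+\EndTn \,\cup\, \alpha^-\EndTn \,\cup\, \alpha^0\EndTn.$$
Each of the four summands on the right is already listed in Proposition~\ref{prop:GR rel}, provided we identify the type (and the specific orbit, when relevant) of $\alpha^\dagger$ for each $\dagger \in \{+,-,0\}$. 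This identification is supplied by Lemma~\ref{lem:operations on sets}, together with Lemma~\ref{lem:facts on U}(3), which gives $t^2=\id$ whenever $t \in \Sn$.

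The cases $\alpha \in \AutTn$ and $\alpha \in E_1$ are immediate: the former because $\alpha$ is invertible, and the latter because $E_1$ is the minimal ideal of $\EndTn$ by Proposition~\ref{cor:Ei are right-zero}(3). For $\alpha \in E_3$ we already have $\alpha\EndTn = \EndTn \setminus \AutTn$, and since $\EndTn\alpha\EndTn \subseteq \EndTn \setminus \AutTn$ by a rank argument, equality holds. The remaining singular cases $\alpha \in A \cup E_2 \cup B \cup C$ are handled uniformly: compute the type and orbit of each $\alpha^\dagger$ via Lemma~\ref{lem:operations on sets}, read off the corresponding right ideal from Proposition~\ref{prop:GR rel}, and take the union. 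In every one of these cases one finds $\alpha^-, \alpha^0 \in E_1$, so these contribute only $E_1$, which is absorbed by the other terms of the union.

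The one case requiring a little care is $\alpha = \pte \in B$, as this is the sole situation in which $\alpha^+$ contributes a summand not already present in $\alpha\EndTn = B_{t,e} \cup E_1$. Here $\alpha^+ = \ptce$, and since $t \notin \Sn$ with $t^2$ a non-identity idempotent satisfying $t^2 \neq e \neq \id$, we have $\alpha^+ \in C_{t^2,e}$; by Proposition~\ref{prop:GR rel} this gives $\alpha^+\EndTn = C_{t^2,e} \cup E_1$, which is exactly the extra orbit $C_{t^2,e}$ appearing in the statement. The main (mild) obstacle is thus purely bookkeeping: one must track orbits and not just types, so that in the $B$-case the answer is given in the sharp form $B_{t,e} \cup C_{t^2,e} \cup E_1$ rather than the coarser $B_{t,e} \cup C \cup E_1$.
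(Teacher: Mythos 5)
Your proof is correct and is essentially the paper's own argument, merely factored in the opposite order: you compute $\EndTn\alpha\EndTn$ as $(\EndTn\alpha)\EndTn=\alpha\EndTn\cup\alpha^+\EndTn\cup\alpha^-\EndTn\cup\alpha^0\EndTn$, whereas the paper computes it as $\EndTn(\alpha\EndTn)$, that is, as the closure of the right ideal from Proposition~\ref{prop:GR rel} under the operations $\gamma\mapsto\gamma^+,\gamma^-,\gamma^0$ of Lemma~\ref{lem:operations on sets}. Both versions use exactly the same ingredients and isolate the same key point: $\alpha\in B$ is the only case in which the operation $\gamma\mapsto\gamma^+$ contributes something new, namely the orbit $C_{t^2,e}$.
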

\begin{proof}
We have already given a description of the principal right ideals $\alpha \EndTn$ in Proposition~\ref{prop:GR rel}. It is clear that $\EndTn\alpha \EndTn = \alpha \EndTn$ for all $\alpha \in \AutTn$.  Suppose then that $\alpha \in \singn$. By Corollary \ref{cor:multiplication in ETn} and Remark \ref{rem:expression +,-,0} the description of the principal two-sided ideals $\EndTn \alpha \EndTn$ can therefore be found by taking the closure of the principal right ideal $\alpha \EndTn$ under the operations $\gamma \mapsto \gamma^+, \gamma \mapsto \gamma^-$ and $\gamma \mapsto \gamma^0$. By Lemma \ref{lem:operations on sets} we note that each ideal $\alpha \EndTn$ is closed under the latter two operations, and if $\alpha \not \in B$ then it is closed under all three operations, giving $\EndTn\alpha \EndTn = \alpha \EndTn$. For $\alpha = \pte[t^g, e^g] \in B_{t,e}$ we note that $\alpha^+ = \pte[(t^g)^2, e^g] = \pte[(t^2)^g, e^g] \in C_{t^2, e}$.
\end{proof}

In our considerations of Green's relations thus far, we have not mentioned that the relations $\GL,\GR$ and $\GJ$ are the equivalence relations associated with  pre-orders $\leq_{\GL}$,
$\leq_{\GR}$ and $\leq_{\GJ}$, defined by inclusions of left, right and  two-sided ideals, respectively.  

As an immediate consequence of the description of all the principal ideals of $\EndTn$ we can describe the $\GJ$-order of elements in $\EndTn$ as follows:
\begin{cor}\label{cor:J pre-order}
Let $\alpha, \beta\in \EndTn$. Then $\beta\leq_{\GJ}\alpha$ if and only if one of the following holds:
\begin{enumerate}
\item $\alpha\in \AutTn$;
\item $\alpha\in E_3$ and $\beta\in \singn$;
\item $\alpha = \pte\in A$, and $\beta \in A_{t,e}\cup E_2 \cup C\cup E_1$;
\item $\alpha \in E_2$ and $\beta\in E_2\cup C\cup E_1$;
\item $\alpha = \pte\in B$ and $\beta \in B_{t,e}\cup C_{t^2, e} \cup E_1$;
\item $\alpha = \pte \in C$ and $\beta\in C_{t,e}\cup E_1$; or 
\item $\alpha, \beta\in E_1$.
\end{enumerate}
\end{cor}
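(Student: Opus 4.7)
The plan is to observe that this corollary is essentially a restatement of Proposition~\ref{prop:principal ideals} once we unwind the definition of the $\GJ$-preorder. By definition, $\beta \leq_{\GJ} \alpha$ means $\EndTn \beta \EndTn \subseteq \EndTn \alpha \EndTn$, and since $\EndTn$ is a monoid we have $\beta \in \EndTn \beta \EndTn$, so this is equivalent to $\beta \in \EndTn \alpha \EndTn$. Thus the content of the corollary is nothing more than the description of \emph{which elements $\beta$} lie in each of the principal two-sided ideals computed in Proposition~\ref{prop:principal ideals}.

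With that reduction in hand, the proof is a case analysis on $\alpha$. By Lemma~\ref{lem:partition of EndTn}, the element $\alpha$ lies in exactly one of the classes $\AutTn$, $E_3$, $A$, $E_2$, $B$, $C$, $E_1$; these are mutually disjoint for $n\geq 5$. For each class, Proposition~\ref{prop:principal ideals} gives an explicit description of $\EndTn \alpha \EndTn$. One then simply matches each of the seven cases to its corresponding ideal: case (1) to $\EndTn$, case (2) to $\EndTn\setminus\AutTn = \singn$, case (3) to $A_{t,e}\cup E_2\cup C\cup E_1$, case (4) to $E_2\cup C\cup E_1$, case (5) to $B_{t,e}\cup C_{t^2,e}\cup E_1$, case (6) to $C_{t,e}\cup E_1$ and case (7) to $E_1$.

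Since Proposition~\ref{prop:principal ideals} has already done the substantive computational work (tracking how the operations $\gamma\mapsto \gamma,\gamma^+,\gamma^-,\gamma^0$ close up the orbits in each case, and in particular noting that for $\alpha=\pte\in B$ the orbit $B_{t,e}$ is enlarged by $C_{t^2,e}$ under the $+$ operation via Lemma~\ref{lem:operations on sets}), there is no genuine obstacle here. The corollary is really a bookkeeping restatement aimed at making the induced partial order on $\GJ$-classes visible at a glance, for later use in describing the ideal lattice of $\EndTn$.
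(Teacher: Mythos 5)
Your proof is correct and matches the paper exactly: the paper states Corollary~\ref{cor:J pre-order} as an immediate consequence of Proposition~\ref{prop:principal ideals}, giving no further argument, and your reduction of $\beta\leq_{\GJ}\alpha$ to $\beta\in\EndTn\alpha\EndTn$ (valid since $\EndTn$ is a monoid) followed by the case-by-case matching is precisely the bookkeeping the authors leave implicit.
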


Using this, we can now see the structure of the $\GJ$-order of $\EndTn$ as laid out in Figure~\ref{fig:J-order}, where we have added how the maps $\alpha\mapsto\alpha^x$ for $x\in\set{+,-,0}$ act on the different components of $\EndTn$.

\begin{figure}[ht]
\centering
\includegraphics[scale=0.8, draft=false]{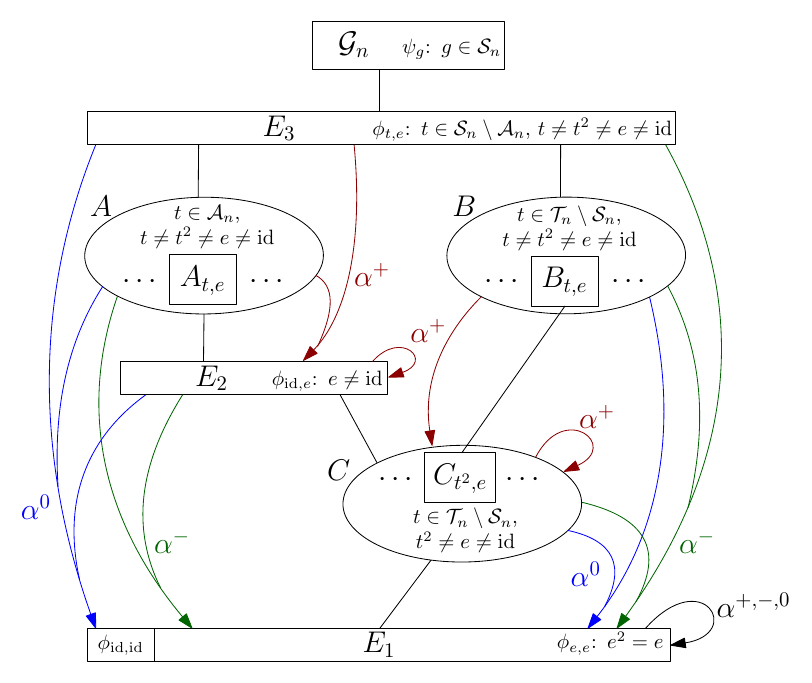}
\caption{The $\protect\GJ$-order of $\EndTn$ for $n \geq 5$. Each rectangle represents a $\GJ$-class, whilst the ovals represent the groupings of elements according to the sets $A, B, C$. Black lines indicate the $\GJ$-order, whilst directed lines indicate the action of the maps $\alpha \mapsto \alpha^x$ for $x \in \{+, -, 0\}$.}\label{fig:J-order}
\end{figure}

From the description of the principal ideals of $\EndTn$, we can also give an explicit formulation for each ideal of $\EndTn$.
In order to do so, we use the notation introduced in Definition~\ref{notn:Xte}.
Notice in particular that if $Y$ is a union of orbits  of elements of $B$, that is, $Y= \bigcup_{\pte\in B'} \pte \AutTn = \bigcup_{\pte \in B'} B_{t,e}$ for some $B'\subseteq B$, then
\begin{align*} 
Y^+ &= \bigcup_{\pte\in B'}\set{(\pte[t^g,e^g])^+\colon g\in \Sn} 
 = \bigcup_{\pte\in B'}\set{\pte[t^2,e]\psi_g\colon g\in \Sn} \\
&= \bigcup_{\pte\in B'} \ptce \AutTn = \bigcup_{\pte\in B'} C_{t^2,e}.
\end{align*}
Thus, $Y^+\subseteq C$ is again a union of orbits. Since each ideal is a union of principal ideals, we can call upon
Proposition~\ref{prop:principal ideals} to describe
 the ideals of $\EndTn$, as follows.

\begin{cor}\label{cor:ideals of EndTn}
Any ideal of $\EndTn$ takes one of the following forms:
\begin{enumerate}
\item $\EndTn$ (the only ideal containing automorphisms);
\item $\singn$ (the only proper ideal containing elements of odd type);
\item $X \cup Y \cup E_2 \cup C \cup E_1$ (ideals containing elements of even type, but no elements of odd type); or
\item $Y \cup Y^+ \cup Z \cup E_1$ (ideals containing only elements of trivial or non-permutation type),
\end{enumerate}
where the sets $X$, $Y$ and $Z$ are (possibly empty) unions of orbits taken from sets $A, B$ and $C$ respectively.
\end{cor}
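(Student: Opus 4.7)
The plan is to exploit the fact that every ideal $I$ of $\EndTn$ is a union of principal two-sided ideals, combined with the explicit description of these given in Proposition~\ref{prop:principal ideals}. First I would record two general structural facts: every nonempty ideal is a union of $\AutTn$-orbits (since $\alpha\in I$ and $\psi_g\in\AutTn$ force $\alpha\psi_g\in I$, so $\alpha\AutTn\subseteq I$), and every nonempty ideal contains the minimal ideal $E_1$ of $\EndTn$ (Proposition~\ref{cor:Ei are right-zero}).

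I would then proceed by a case analysis according to which ``highest'' type of element appears in $I$. If $I$ meets $\AutTn$, then $I=\EndTn$. If $I$ is disjoint from $\AutTn$ but meets $E_3$, then $\EndTn\alpha\EndTn=\singn$ for any $\alpha\in E_3$ by Proposition~\ref{prop:principal ideals}, and $\singn$ is itself an ideal by Lemma~\ref{lem:partition of EndTn}, so $I=\singn$. If $I$ avoids $\AutTn\cup E_3$ but meets $A\cup E_2$, then Proposition~\ref{prop:principal ideals} shows that both types of principal ideals in $A$ and $E_2$ contain $E_2\cup C\cup E_1$, giving $E_2\cup C\cup E_1\subseteq I$. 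Setting $X=I\cap A$ and $Y=I\cap B$ (both unions of orbits, since $I$ is a union of orbits), this yields $I=X\cup Y\cup E_2\cup C\cup E_1$.

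The remaining case is when $I\subseteq B\cup C\cup E_1$. Setting $Y=I\cap B$, each orbit $B_{t,e}\subseteq Y$ contributes $C_{t^2,e}$ to $I$ (by the form of the principal ideal generated by $\pte\in B$), so that $Y^+\subseteq I\cap C$ in the notation introduced immediately before the corollary. Letting $Z$ denote the union of the remaining orbits of $I\cap C$ not already contained in $Y^+$, we obtain $I=Y\cup Y^+\cup Z\cup E_1$.

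To complete the argument I would verify the converse, namely that each of the four listed forms is indeed an ideal. This reduces to checking closure under the generation of principal ideals via Proposition~\ref{prop:principal ideals}: for each orbit in the prescribed set, its principal ideal is again contained in the set. The only mildly delicate point is the last case, where the forced passage $B_{t,e}\mapsto C_{t^2,e}$ must be tracked; this is exactly what the $Y^+$ notation is designed to encode, so it poses no genuine obstacle. I expect the main bookkeeping effort to lie in this verification of the fourth case, where one must simultaneously parametrise the ``free'' orbits of $C$ (collected in $Z$) alongside those already forced by $Y$.
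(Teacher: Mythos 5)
Your proposal is correct and takes essentially the same approach as the paper, which likewise observes that every ideal is a union of principal two-sided ideals and reads off the four forms from Proposition~\ref{prop:principal ideals}, with the remark preceding the corollary supplying that $Y^+$ is again a union of orbits in $C$. Your case analysis on the highest type present in the ideal, and the converse verification that each listed form is a union of principal ideals, simply make explicit the details the paper leaves implicit.
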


\begin{remark}
We note that it follows from the multiplication given in Corollary~\ref{cor:multiplication in ETn} that every singular element squares to an idempotent, and so $\singn$ satisfies the identity $x^4=x^2$. Moreover, it is easy to see that in $\singn$ regular $\GD$-classes are aperiodic subsemigroups (that is, the subgroups are trivial) and in 
 $\EndTn$ the regular $\GD$-classes are subsemigroups. Another way of saying this is that $\singn$ belongs to the pseudovariety $\mathbf{DA}$ 
 and $\EndTn$ to the pseudovariety $\mathbf{DS}$ (see \cite{pin}). Since $\TXn$ does not belong to $\mathbf{DS}$ or $\mathbf{DA}$, this gives one way to see that $\TXn$ does not embed in $\EndTn$ or $\singn$ (although one can also show this more directly). Conversely, $\EndTn$ does not embed in $\TXn$: this can be seen by a quick counting argument, since for example the number of idempotents in $\EndTn$ is equal to $1 + |E_3| + |E_2| + |E_1|$ which exceeds the total number of idempotents $|E_1|$ in $\TXn$.
\end{remark}

\section{Extended Green's relations and generalised regularity properties}\label{sec:extended Greens}
We assume once more that $n \geq 5$ so that, as shown in Proposition~\ref{prop:regular elements}, the monoid $\EndTn$ is not regular. In order to better understand the structure of these endomorphism monoids, we turn to the extended Green's relations.
We now recall their precise definitions. The original sources for the $^*$-case is \cite{fountain:1982} and for the $^\sim$-case 
\cite{elqallali:1980} (see also \cite{lawson:1991}).

\begin{align*}
\alpha\GRs\beta \iff& \big(\gamma\alpha = \delta\alpha \ssi \gamma\beta =\delta\beta \quad \forall\gamma,\delta\in \EndTn\big),\\
\alpha\GLs\beta \iff& \big(\alpha\gamma = \alpha\delta \ssi \beta\gamma = \beta\delta \quad \forall\gamma,\delta\in \EndTn\big),\\
\alpha\GRt \beta \iff& \big(\eta\alpha=\alpha \Leftrightarrow \eta\beta = \beta \quad \forall \eta=\eta^2\in \EndTn \big),\\
\alpha\GLt \beta \iff& \big( \alpha\eta=\alpha \Leftrightarrow \beta\eta = \beta \quad \forall \eta=\eta^2\in \EndTn \big), \\
\GHs = \GLs \wedge \GRs = \GLs&\cap \GRs, \quad \phantom{and} \quad \GHt = \GLt \wedge \GRt = \GLt\cap \GRt, \\
\GDs = \GLs &\vee \GRs,\quad \phantom{and} \quad \GDt = \GLt \vee \GRt,\\
\alpha\GJs&\beta \iff J^*(\alpha) = J^*(\beta),\quad \text{and}\\
\alpha\GJt{}&\beta \iff \widetilde{J}(\alpha) = \widetilde{J}(\beta),
\end{align*}
where $J^*(\alpha)$ [resp. $\widetilde{J}(\alpha)$] is the smallest ideal containing $\alpha$ that is saturated by $\GLs$ and $\GRs$ \big[resp. by $\GLt$ and $\GRt{}$\big].

Before proceeding, we make some additional remarks. 
These relations come with the appropriate generalisation of the inclusions $\GH\subseteq \GR,\GL\subseteq \GD\subseteq \GJ$. 
As for Green's relations, $\GRs,\GLs,\GJs,\GRt,\GLt$ and $\GJt$ are the equivalence relations associated with certain preorders; we do not comment further on these. 
It is easily seen that $\GR\subseteq \GRs\subseteq \GRt$ and $\GL\subseteq \GLs\subseteq \GLt$. 
For any regular semigroup $S$ we have $\GR=\GRs= \GRt$ and $\GL=\GLs= \GLt$. In fact, if $a$ and $b$ are any regular elements of a semigroup $S$ and $a\GRt b$, then with $a=axa$ we have that $ax=(ax)^2$, so that $b=axb$; together with the converse we obtain $a\GR b$. A dual statement holds for $\GL$ and $\GLt$.
The converse is not true, as may be seen by (in an extreme case) considering a cancellative monoid that is not a group.
This gives  a hint at the idea that for non-regular semigroups, decomposing them using $^*$-classes or $^\sim$-classes may be useful. It is also worth remarking that, in general, $\GLs$ and $\GRs$ do not commute, so that $\GDs\neq \GLs\circ \GRs$; a similar statement holds for $\GLt$ and $\GRt$.

Before describing these relations on $\EndTn$, notice  from the above that  $\AutTn$ is contained in an $\GHs$-class and hence in an $\GHt$-class. 

\begin{lem}\label{lem:G is GRs-class} The group $\AutTn$ is an $\GRs$-class and an $\GRt$-class of $\EndTn$.
\end{lem}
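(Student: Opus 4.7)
The plan is to combine the standard inclusions $\GR \subseteq \GRs \subseteq \GRt$ with the fact that $\AutTn$ is the $\GH$-class of the identity in the finite monoid $\EndTn$. The latter shows that $\AutTn$ is contained in a single $\GR$-class, hence in a single $\GRs$-class, and in a single $\GRt$-class. It therefore suffices to establish the reverse containment at the $\GRt$ level, namely that no singular endomorphism is $\GRt$-related to any automorphism; the corresponding statement for $\GRs$ then follows automatically from $\GRs \subseteq \GRt$.

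To prove this separation, I would compute, for a fixed $\alpha = \psi_g \in \AutTn$, precisely which idempotents $\eta \in \EndTn$ act as left identities on $\alpha$. Using Corollary \ref{cor:multiplication in ETn}, for any singular idempotent $\eta = \pte$ we have $\eta \alpha = \pte[t^g, e^g]$, which lies in $\singn$ and is therefore distinct from $\alpha$ on rank grounds; the same rank argument rules out $\sigma^h \in E_7$ in the case $n = 4$ if one wishes to push the observation that far, though here we only need $n \geq 5$. Consequently the unique idempotent $\eta$ with $\eta\alpha = \alpha$ is $\eta = \psid$.

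On the other hand, for any $\beta \in \singn$, Proposition \ref{cor:Ei are right-zero}(1) says that every element of $E_3$ is a left identity of $\singn$. The hypothesis $n \geq 5$ ensures $E_3$ is non-empty, as witnessed by Example \ref{ex:nonemptyparts} (which supplies an explicit element $\pte[q,f] \in E_3$). Choosing any such $\eta \in E_3$ we have $\eta \neq \psid$ together with $\eta \beta = \beta$, while $\eta \alpha \neq \alpha$ by the previous paragraph. This exhibits an idempotent violating the defining equivalence of $\GRt$, so $\alpha \not\GRt \beta$, as required.

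There is no substantial obstacle; the proof amounts to unwinding the definition of $\GRt$ and applying the multiplication rules already tabulated. The only subtlety worth flagging is the reliance on the non-emptiness of $E_3$, which fails for small $n$ and is precisely why the hypothesis $n \geq 5$ cannot be dropped at this stage.
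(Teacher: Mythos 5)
Your proof is correct and follows essentially the same route as the paper: both reduce the problem, via $\GR \subseteq \GRs \subseteq \GRt$ and the fact that $\AutTn$ is the group of units (the $\GH$-class of $\psid$), to showing that no singular element is $\GRt$-related to an automorphism, and both witness the separation with an idempotent of $E_3$, which is a left identity for every element of $\singn$ but not for $\psid$. Your version merely spells out a little more than the paper does (computing that $\psid$ is the \emph{only} idempotent left identity of an automorphism via the rank of $\pte\psi_g = \pte[t^g,e^g]$, where the paper just notes $\pte\psid = \pte \neq \psid$), which is a harmless elaboration of the same argument.
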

\begin{proof} 
Since $\AutTn$ lies in an $\GRs$-class, and that $\GRs\subseteq \GRt$, it only remains to show that if $\beta\notin \AutTn$ then $\beta$ is not $\GRt$-related to $\psid$. 
But this is clear since $\beta=\phi_{t,e}\beta$ for any odd $\phi_{t,e}$, but certainly $\psid\neq \phi_{t,e}\psid$.
\end{proof}

For the description of the other $\GRs$-classes  we can show, using the type of maps in $\singn$, that it is sufficient to consider idempotents acting on the left in order to characterise the relation $\GRs$ for elements in $\singn$.

\begin{lemma}\label{lem:GRs depends only on idpt}
For any $\alpha,\beta\in \EndTn$, $\alpha\GRs\beta$ is equivalent to
$$\eta\alpha = \zeta\alpha \iff \eta\beta = \zeta\beta\quad \forall\eta, \zeta\in E(\EndTn).$$
\end{lemma}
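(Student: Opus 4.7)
The forward implication is immediate since $E(\EndTn) \subseteq \EndTn$. For the converse, my strategy would be to show that every left multiplier $\gamma \in \EndTn$ can, insofar as its action on any prescribed singular endomorphism is concerned, be replaced by an idempotent of the same type. By Corollary~\ref{cor:multiplication in ETn} (summarised in Remark~\ref{rem:expression +,-,0}), for $\mu \in \singn$ the product $\gamma\mu$ depends only on the type of $\gamma$ and lies in $\set{\mu, \mu^+, \mu^-, \mu^0}$. Moreover, for each of the possible left-action-types a suitable idempotent representative exists: $\psid$ for both group and odd type (which act identically on $\singn$), any element of $E_2$ for even type, any element of $E_1 \setminus \set{\phid}$ for non-permutation type, and $\phid$ for trivial type. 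Each of these sets is non-empty for $n \geq 5$ by Example~\ref{ex:nonemptyparts}.

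With this observation in hand, I would split into two cases depending on whether $\alpha$ is an automorphism. Suppose first that $\alpha \in \AutTn$. Since $\alpha$ is invertible, $\eta\alpha = \zeta\alpha \iff \eta = \zeta$, so the idempotent condition would force $\eta \mapsto \eta\beta$ to be injective on $E(\EndTn)$. If $\beta$ were in $\singn$, the first clause of Corollary~\ref{cor:multiplication in ETn}(4) would give $\zeta\beta = \beta = \psid\beta$ for any $\zeta \in E_3$, contradicting injectivity. Hence $\beta \in \AutTn$, and both the full condition and the idempotent condition reduce to $\gamma = \delta$. By symmetry, if $\beta \in \AutTn$ then so is $\alpha$, leaving only the case in which $\alpha, \beta \in \singn$.

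In this remaining case, given arbitrary $\gamma, \delta \in \EndTn$, choose idempotents $\eta_\gamma, \eta_\delta \in E(\EndTn)$ sharing the respective left-action-types of $\gamma$ and $\delta$, as provided by the observation above. Then $\gamma\alpha = \eta_\gamma\alpha$, $\delta\alpha = \eta_\delta\alpha$, and likewise $\gamma\beta = \eta_\gamma\beta$, $\delta\beta = \eta_\delta\beta$. The chain
$$ \gamma\alpha = \delta\alpha \iff \eta_\gamma\alpha = \eta_\delta\alpha \iff \eta_\gamma\beta = \eta_\delta\beta \iff \gamma\beta = \delta\beta, $$
whose middle equivalence is precisely the hypothesised idempotent condition, closes the argument.

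The one mildly subtle step I expect is the segregation of the $\AutTn$ case: the ``replacement by a same-type idempotent'' trick works only when the right factor is singular, because two elements of different type can yield different products when multiplied by an automorphism (indeed they are then distinguished by their images). Isolating this case via the injectivity/cancellativity argument is the essential ingredient beyond the routine type-bookkeeping encoded in Corollary~\ref{cor:multiplication in ETn}.
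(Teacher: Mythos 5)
Your proof is correct and follows essentially the same route as the paper's: the forward direction is immediate, and the converse splits into the automorphism case (settled by distinguishing $\psid$ from an idempotent in $E_3$, exactly the pair the paper uses) and the singular case (settled by replacing arbitrary left multipliers with idempotents of the same type, which is the paper's appeal to Lemma~\ref{lem:equiv same type and products equal}). Your injectivity framing of the automorphism case and your use of $\psid$ as the representative for both group and odd type are only cosmetic variations.
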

\begin{proof}
Suppose first that $\alpha \GRs \beta$. By definition we have that for all $\gamma, \delta \in \EndTn$ we have $\gamma \alpha = \delta \alpha$ if and only if $\gamma \beta = \delta \beta$. In particular, this statement holds for all idempotents $\gamma, \delta \in \EndTn$. Conversely, suppose that for all $\eta, \zeta\in E(\EndTn)$ we have $\eta\alpha = \zeta\alpha \iff \eta\beta = \zeta\beta.$ If $\alpha \in \AutTn$ then it is clear that this condition is satisfied only if $\beta \in \AutTn$ (to see this, consider taking $\eta= \psid$, and $\zeta=\pte\in E_3$: their left action will agree on the singular elements, but will differ on the automorphisms), which by the previous result gives that $\alpha \GRs \beta$ in this case. 
Suppose then that $\alpha, \beta \in \EndTn\setminus \AutTn$,  and let  $\gamma, \delta \in \EndTn$ be such that $\gamma \alpha = \delta \alpha$.
Since there is an idempotent of each type, it follows from Lemma~\ref{lem:equiv same type and products equal} that $\eta \alpha = \zeta \alpha$ where $\eta$ is an idempotent of the same type as $\gamma$ and $\zeta$ is an idempotent of the same type as $\delta$, and hence $\gamma\beta = \eta\beta = \zeta\beta = \delta\beta$. A dual argument shows that  $\gamma \beta = \delta \beta$ implies $\gamma \alpha = \delta \alpha$, and hence $\alpha \GRs \beta$.
\end{proof}

Since $\GRs$ and $\GRt$ only depend on idempotents, we start by showing when a map admits a left identity.

\begin{lemma}\label{lem:left identities}
Let $\alpha\in \EndTn$. Then $\eta\alpha = \alpha$ for $\eta\in\EndTn$ if and only if one of the following holds:
\begin{enumerate}
\item $\alpha$ has rank $n^n$  and $\eta = \varepsilon$ (equivalently, $\alpha \in \AutTn$, and $\eta = \varepsilon$); 
\item $\alpha$ has rank $3$ and $\eta$ has group or odd type (equivalently, $\alpha\in E_3\cup A\cup B$ and $\eta\in \AutTn\cup E_3$);
\item $\alpha$ has rank $2$ and $\eta$ has group, odd, or even type (equivalently, $\alpha\in E_2\cup C$ and $\eta\in \AutTn\cup E_3\cup A\cup E_2$); or
\item $\alpha$ has rank $1$ and $\eta\in \EndTn$ (equivalently, $\alpha\in E_1$ and $\eta \in \EndTn$) .
\end{enumerate}
In particular, if $\eta$ is an idempotent then, $\eta\alpha = \alpha$ if and only if $\alpha \leq_{\GJ} \eta$.
\end{lemma}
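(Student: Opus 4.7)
The plan is to handle the main statement by splitting on the rank of $\alpha$, and then derive the ``In particular'' claim by comparing the four cases against the $\GJ$-order description obtained in Corollary~\ref{cor:J pre-order}. The heavy lifting has already been done in Remark~\ref{rem:expression +,-,0} and Lemma~\ref{lem:operations on sets}, so the proof should essentially amount to bookkeeping.

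First I would dispose of the case $\alpha \in \AutTn$: since $\alpha$ is invertible, $\eta\alpha = \alpha$ forces $\eta = \varepsilon$ by right-multiplying with $\alpha^{-1}$. This gives (1), and note that $\varepsilon$ is trivially of group type, so the product $\eta\alpha \in \AutTn$ is consistent. Conversely, if $\eta \in \AutTn \cup E_3 \cup A \cup E_2 \cup \cdots$ but $\alpha$ has rank strictly less than $n^n$, then $\eta\alpha$ lies in $\singn$ by Corollary~\ref{cor:multiplication in ETn}, so equality $\eta\alpha=\alpha$ forces $\alpha \in \singn$.

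Now suppose $\alpha \in \singn$, so that $\alpha = \phi_{t,e}$ with $(t,e) \in P_n$. By Remark~\ref{rem:expression +,-,0}, for any $\eta \in \EndTn$ the product $\eta\alpha$ is precisely one of $\alpha$, $\alpha^+$, $\alpha^-$ or $\alpha^0$, according to whether $\eta$ has group/odd type, even type, non-permutation type, or trivial type respectively. Hence $\eta\alpha = \alpha$ reduces to asking when $\alpha$ equals $\alpha^+$, $\alpha^-$ or $\alpha^0$, which is fully answered by parts (2) and (3) of Lemma~\ref{lem:operations on sets}. Explicitly: $\alpha^+ = \alpha$ iff $\operatorname{rank}(\alpha) \leq 2$, whilst $\alpha^- = \alpha$ and $\alpha^0 = \alpha$ each hold iff $\operatorname{rank}(\alpha) = 1$. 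Running through ranks $3$, $2$ and $1$ yields cases (2), (3) and (4) respectively, and in each case the stated list of permissible types for $\eta$ can be read off Lemma~\ref{lem:partition of EndTn} and Definition~\ref{def:types}.

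For the ``In particular'' claim, assume $\eta = \eta^2$. By Lemma~\ref{lem:idempotents of EndTn}, $\eta$ is either $\varepsilon$ (group type), lies in $E_3$ (odd type), lies in $E_2$ (even type), or lies in $E_1$ (trivial type if $\eta=\phid$, otherwise non-permutation type). In each of these four cases, the condition on $\alpha$ obtained above ($\alpha\in\EndTn$; $\alpha\in\singn$; $\alpha\in E_2\cup C\cup E_1$; $\alpha\in E_1$, regardless of whether $\eta=\phid$) matches exactly one of the clauses of Corollary~\ref{cor:J pre-order}, giving $\alpha \leq_{\GJ} \eta$. I do not anticipate any genuine obstacle; the main care is just to make sure that when $\eta \in E_1$ the two possibilities $\eta = \phid$ and $\eta \neq \phid$ both produce the same set $E_1$ of admissible $\alpha$, as is seen since in both cases $\alpha^- = \alpha$ or $\alpha^0 = \alpha$ requires $\operatorname{rank}(\alpha) = 1$.
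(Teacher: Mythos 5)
Your proposal is correct and takes essentially the same route as the paper: the paper's (much terser) proof likewise handles $\AutTn$ separately, determines the left identities of singular elements via rank and type considerations --- exactly the content of Remark~\ref{rem:expression +,-,0} and Lemma~\ref{lem:operations on sets} that you invoke --- and obtains the final clause by matching against Corollary~\ref{cor:J pre-order}. Your write-up is simply a fleshed-out version of the same argument, with the case analysis made explicit.
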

\begin{proof}
It is clear that $\psid$ is the only left identity for $\alpha\in \AutTn$.
Assume now that  $\alpha\in \singn$. By consideration of rank, we can immediately determine the idempotent left identities. 
The result follows from Corollary~\ref{cor:J pre-order} and Lemma~\ref{lem:GRs depends only on idpt}.
\end{proof}

Furthermore, two idempotents $\eta$ and $\zeta$ satisfying $\eta\alpha = \zeta\alpha$ for a given map $\alpha\in \EndTn$ must lie above $\alpha$ in the $\GJ$ order, or have the same type. 
This is formally given by the following.

\begin{lem}\label{lem:pairs of idpt satisfying left-cancellation}
Let  $\alpha=\pte\in\singn$. Then $\eta\alpha = \zeta\alpha$ for some $\eta, \zeta\in E(\EndTn)$ if and only if one of the following happens:
\begin{itemize}
\item $\alpha\leq_{\GJ} \eta$ and $\alpha\leq_{\GJ}\zeta$; or
\item $\eta$ and $\zeta$ have the same type.
\end{itemize}
\end{lem}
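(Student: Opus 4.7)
The plan is to handle sufficiency and necessity separately. Sufficiency is immediate from what is already available: if $\eta$ and $\zeta$ have the same type, then since $\alpha\in\singn$, Lemma~\ref{lem:equiv same type and products equal} yields $\eta\alpha=\zeta\alpha$ (in the degenerate cases where the type has a unique representative, namely group or trivial type, we simply have $\eta=\zeta$); while if $\alpha\leq_{\GJ}\eta$ and $\alpha\leq_{\GJ}\zeta$, then the final sentence of Lemma~\ref{lem:left identities} says that both idempotents act as left identities on $\alpha$, so $\eta\alpha=\alpha=\zeta\alpha$.

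For necessity, assume $\eta\alpha=\zeta\alpha$. The key point, implicit in Remark~\ref{rem:expression +,-,0}, is that for an idempotent $\eta$ the product $\eta\alpha$ depends only on the type of $\eta$: it equals $\alpha$ if $\eta$ has group or odd type, $\alpha^+$ if even type, $\alpha^-$ if non-permutation type, and $\alpha^0$ if trivial type (and similarly for $\zeta$). So the hypothesis reduces to asking which type-classes produce the same value among $\alpha,\alpha^+,\alpha^-,\alpha^0$, and by Lemma~\ref{lem:operations on sets} this is controlled by the rank of $\alpha$. The natural strategy is therefore a case analysis on $\mathrm{rank}(\alpha)\in\set{1,2,3}$.

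In the rank $3$ case, Lemma~\ref{lem:operations on sets}(1) gives ranks $3,2,1,1$ for $\alpha,\alpha^+,\alpha^-,\alpha^0$, so the only potential collision is $\alpha^-=\alpha^0$. I would rule this out by splitting into the subcases $\alpha\in E_3$, $\alpha\in A$, $\alpha\in B$: in the first two we have $t^2=\id$ and $e\neq\id$, forcing $\alpha^0=\phid\neq\pee=\alpha^-$, and in the third the defining inequality $t^2\neq e$ for $B$ gives the same conclusion. With all four values distinct, $\eta\alpha=\zeta\alpha$ forces $\eta$ and $\zeta$ into the same one of the four type-classes $\set{\text{group, odd}}$, $\set{\text{even}}$, $\set{\text{non-perm}}$, $\set{\text{trivial}}$. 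By Lemma~\ref{lem:left identities} the first possibility is exactly $\alpha\leq_{\GJ}\eta$ and $\alpha\leq_{\GJ}\zeta$; the remaining three are precisely instances of ``same type''.

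The rank $2$ and rank $1$ cases follow the same template. In rank $2$, Lemma~\ref{lem:operations on sets}(2) collapses $\alpha^+=\alpha$, while $\alpha,\alpha^-,\alpha^0$ remain pairwise distinct (using $e\neq\id$ when $\alpha\in E_2$, and $t\neq e$ when $\alpha\in C$), so the type-classes coalesce to $\set{\text{group, odd, even}}$, $\set{\text{non-perm}}$, $\set{\text{trivial}}$, and the first class is once again characterised by ``$\alpha\leq_{\GJ}\eta$'' via Lemma~\ref{lem:left identities}. In rank $1$, $\alpha\in E_1$ lies in the minimal ideal by Proposition~\ref{cor:Ei are right-zero}, so every idempotent is $\GJ$-above $\alpha$ and the first condition of the lemma holds trivially (consistent with $\alpha=\alpha^+=\alpha^-=\alpha^0$ forcing $\eta\alpha=\zeta\alpha$ for all idempotent $\eta,\zeta$). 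The main obstacle in the whole argument is thus not conceptual but the small distinctness check on $\alpha^-$ versus $\alpha^0$; once this is done, the rest is a direct translation through Remark~\ref{rem:expression +,-,0} and Lemma~\ref{lem:left identities}.
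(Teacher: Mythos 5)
Your proof is correct, and its converse is organised differently from the paper's, so a comparison is worthwhile. The sufficiency direction is identical to the paper's (Lemma~\ref{lem:equiv same type and products equal} for equal types, Lemma~\ref{lem:left identities} for the $\GJ$-condition), and both converses rest on the same observation, that $\eta\alpha$ depends only on the type of the idempotent $\eta$ (Remark~\ref{rem:expression +,-,0}). The paper, however, performs no case analysis on $\alpha$: it first peels off the case $\alpha\leq_{\GJ}\eta$ (then $\zeta\alpha=\eta\alpha=\alpha$ forces $\alpha\leq_{\GJ}\zeta$), notes that in the remaining case $\alpha\notin E_1$, $t\neq e$ and $\eta,\zeta\in E_2\cup E_1$, and then eliminates every mixed-type pair by one uniform computation: each equality forces $t^2=e$, whence $t=e$ by Lemma~\ref{lem:facts on U}(2), a contradiction. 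You instead stratify by $\mathrm{rank}(\alpha)$ and determine exactly which of $\alpha,\alpha^+,\alpha^-,\alpha^0$ coincide, reading off the merged type-classes (group together with odd at rank $3$; group, odd and even at rank $2$; all types at rank $1$) and translating the first class into the $\GJ$-condition via Lemma~\ref{lem:left identities}. Your route is a little longer but buys something the paper leaves implicit: it exhibits the full collision pattern, which is precisely the structure that resurfaces in Proposition~\ref{prop:GRs and GRt rel}, where the $\GRs$-classes turn out to be the rank classes; it also bypasses Lemma~\ref{lem:facts on U}(2) in favour of definitional inequalities in each class. One citation needs tightening: in the rank-$3$ case, Lemma~\ref{lem:operations on sets}(1) does not by itself give $\mathrm{rank}(\alpha^+)=2$ (it permits rank $1$); one needs $t^2\neq e$, which is the rank-$3$ condition of Corollary~\ref{cor:constraints of endomorphisms with e,t}(3), and the same inequality is what excludes the a priori collisions $\alpha^+=\alpha^-$ and $\alpha^+=\alpha^0$. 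Since this is exactly the inequality your check of $\alpha^-\neq\alpha^0$ invokes, the repair is immediate and the argument stands.
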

\begin{proof} 
If  $\eta,\zeta\in E(\EndTn)$ are such that $\alpha \leq_{\GJ}\eta$ and $\alpha\leq_{\GJ}\zeta$, then $\eta\alpha = \alpha$ and $\zeta\alpha = \alpha$ by Lemma \ref{lem:left identities}, which shows that $\eta\alpha = \zeta\alpha$.
Similarly, if $\eta$ and $\zeta$ have the same type, then $\eta\alpha = \zeta\alpha$ by Lemma~\ref{lem:equiv same type and products equal} and the fact that $\psid$ is the only idempotent of group type.

For the converse, suppose that  that $\eta\alpha = \zeta\alpha$. 
If $\alpha\leq_{\GJ}\eta$, then $\eta\alpha = \alpha$ by Lemma~\ref{lem:left identities}.
Therefore $\zeta\alpha = \eta\alpha = \alpha$ so that $\alpha\leq_{\GJ}\zeta$, which corresponds to the first case.
Thus, we can now assume that $\alpha \nleq_{\GJ} \eta$ and $\alpha\nleq_{\GJ} \zeta$.
In particular, this means that $\alpha\notin E_1$ and so $t\neq e$   and that $\eta, \zeta\in E_2\cup E_1$.
Assume that $\eta$ is of even type and $\zeta$ is of trivial or non-permutation type, that is, $\eta\in E_2$ and $\zeta\in E_1$. 
Then $\eta\alpha = \zeta\alpha$ gives 
$$\ptce = \begin{cases} \ptctc& \text{if }\zeta = \phid, \\ \pee & \text{otherwise,}\end{cases}$$
which shows in either case that $t^2=e$, so that $t=e$ by Lemma~\ref{lem:facts on U} part~\ref{enum:fact on t2 is e forces t is e}, a contradiction. 
A similar contradiction arises when considering $\eta$ of trivial type and $\zeta$ of non-permutation type. 
Therefore $\eta$ and $\zeta$ must have the same type.
\end{proof}

We can now easily determine  the $\GRs$ and $\GRt$ relations.

\begin{prop}\label{prop:GRs and GRt rel}
Let  $\alpha, \beta\in \EndTn$. Then the following conditions are equivalent:
\begin{enumerate}\item $\alpha\GRs\beta$;
\item $\alpha\GRt\beta$;
\item $\alpha$ and $\beta$ are $\GR$-below the same idempotents;
\item   $\alpha$ and $\beta$ are \mbox{$\GJ$-below} the same idempotents;
\item $\alpha$ and $\beta$  have the same rank.
\end{enumerate}

Consequently, $\GRt = \GRs$ is a left congruence and the $\GRs$-classes of $\EndTn$ are $\AutTn$, $E_3\cup A\cup B$, $E_2\cup C$ and $E_1$.
\end{prop}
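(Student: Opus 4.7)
The plan is to establish the cycle of implications $(5)\Rightarrow (1)\Rightarrow (2)\iff (3)\iff (4)\iff (5)$, which delivers all equivalences in one sweep.

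First I would observe that for any idempotent $\eta\in\EndTn$ and any $\alpha\in\EndTn$, the condition $\alpha\leq_{\GR}\eta$ is equivalent to $\eta\alpha=\alpha$ (the forward direction uses $\eta^2=\eta$; the reverse is automatic in a monoid). Combined with Lemma~\ref{lem:left identities}, this is further equivalent to $\alpha\leq_{\GJ}\eta$ and depends only on the rank of $\alpha$. Inspecting the four cases of Lemma~\ref{lem:left identities} shows that as the rank of $\alpha$ decreases through $n^n,3,2,1$ the set of dominating idempotents strictly grows along the chain $\{\psid\}\subsetneq \{\psid\}\cup E_3\subsetneq \{\psid\}\cup E_3\cup E_2\subsetneq E(\EndTn)$, so the dominance data determines the rank. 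This yields $(3)\iff(4)\iff(5)$; the equivalence $(2)\iff(3)$ is then immediate from the definition of $\GRt$ applied to the observation $\alpha\leq_{\GR}\eta \iff \eta\alpha=\alpha$, and $(1)\Rightarrow(2)$ is the general inclusion $\GRs\subseteq\GRt$.

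The main content is the implication $(5)\Rightarrow(1)$. By Lemma~\ref{lem:GRs depends only on idpt} it suffices to show that for arbitrary idempotents $\eta,\zeta$ one has $\eta\alpha=\zeta\alpha\iff\eta\beta=\zeta\beta$, assuming $\alpha$ and $\beta$ have equal rank. If $\alpha,\beta\in\AutTn$, say $\alpha=\psi_g$, then Corollary~\ref{cor:multiplication in ETn} gives $\phi_{t,e}\psi_g=\phi_{t^g,e^g}$, and Lemma~\ref{lem:unicity of writing} forces $\eta\alpha=\zeta\alpha$ to be equivalent to $\eta=\zeta$ (with the same reduction for $\beta$), so both conditions coincide. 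If $\alpha,\beta\in\singn$, then Lemma~\ref{lem:pairs of idpt satisfying left-cancellation} characterises $\eta\alpha=\zeta\alpha$ as: either $\alpha\leq_{\GJ}\eta$ and $\alpha\leq_{\GJ}\zeta$ (a condition depending only on the rank of $\alpha$, by the first paragraph), or $\eta$ and $\zeta$ share a type (a condition that does not involve $\alpha$ at all). Both cases transfer symmetrically to $\beta$.

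For the consequences: $\GRs=\GRt$ follows from $(1)\iff(2)$, and $(1)\iff(5)$ together with Lemma~\ref{lem:partition of EndTn} identifies the $\GRs$-classes with the rank strata $\AutTn$, $E_3\cup A\cup B$, $E_2\cup C$, $E_1$. To establish that $\GRs$ is a left congruence, I would check that for any $\gamma\in\EndTn$ the rank of $\gamma\alpha$ depends only on the type of $\gamma$ and the rank of $\alpha$; this is clear for $\gamma\in\AutTn$, and for $\gamma\in\singn$ the product $\gamma\alpha$ equals one of $\alpha,\alpha^+,\alpha^-,\alpha^0$ according to the type of $\gamma$, whose ranks are controlled by Lemma~\ref{lem:operations on sets} in terms of the rank of $\alpha$ alone. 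The most delicate point, and the step I expect to require the most care, is the $\AutTn$ branch of $(5)\Rightarrow(1)$: it lies outside the hypothesis of Lemma~\ref{lem:pairs of idpt satisfying left-cancellation} and must be handled by the separate direct calculation above.
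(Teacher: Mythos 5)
Your proof is correct and follows essentially the same route as the paper: the general observation that $\alpha\leq_{\GR}\eta$ if and only if $\eta\alpha=\alpha$ for idempotent $\eta$, Lemma~\ref{lem:left identities} for $(3)\iff(4)\iff(5)$, and Lemma~\ref{lem:GRs depends only on idpt} combined with Lemma~\ref{lem:pairs of idpt satisfying left-cancellation} for the hard implication; that you close the cycle via $(5)\Rightarrow(1)$ where the paper proves $(2)\Rightarrow(1)$ is immaterial, and your explicit treatment of $\alpha,\beta\in\AutTn$ is sound (it could be shortened: $\alpha$ is invertible, so $\eta\alpha=\zeta\alpha$ right-cancels to $\eta=\zeta$, or one may simply cite Lemma~\ref{lem:G is GRs-class}). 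The only place to tighten is the left-congruence argument: the paper just invokes the general fact that $\GRs$ is a left congruence in \emph{any} semigroup, whereas your auxiliary claim that the rank of $\gamma\alpha$ depends only on the type of $\gamma$ and the rank of $\alpha$ is literally false when $\alpha\in\AutTn$ and $\gamma\in\singn$: there $\gamma\alpha=\pte\psi_g=\pte[t^g,e^g]$ has the rank of $\gamma$, and elements of even type occur in both rank $3$ (the set $A$) and rank $2$ (the set $E_2$). However, for each \emph{fixed} $\gamma$ the rank of $\gamma\alpha$ is determined by the rank of $\alpha$ alone, which is all the left-congruence property requires, so this slip repairs in one line.
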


\begin{proof} If (1) holds, then so also does (2), since $\GRs\subseteq \GRt$. In any semigroup $S$, if
$ea=a$ for some $e,a\in S$, then clearly $a\leq_{\GR} e$.  On the other hand, if $a\leq_{\GR} f$ for some idempotent $f\in S$,
then from $a=fb$ for $b\in S^1$ we obtain $fa=ffb=fb=a$. Applying this to $\EndTn$ gives that (2) and (3) are equivalent.  Examination of Lemma~\ref{lem:left identities} now yields that  (3),(4) and (5) are equivalent. 

 It remains to show that if $\alpha\GRt \beta$ then  $\alpha\GRs \beta$. Suppose therefore  that $\alpha\GRt \beta$ and $\eta\alpha=\zeta\alpha$ where $\eta,\zeta$ are idempotent. Using Lemma~\ref{lem:pairs of idpt satisfying left-cancellation}, either  $\alpha\leq_{\GJ} \eta$ and $\alpha\leq_{\GJ}\zeta$ so that the same is true for $\beta$ and $\eta\beta=\beta=\zeta\beta$; or
$\eta$ and $\zeta$ have the same type, in which case certainly $\eta\beta=\zeta\beta$ by Lemma~\ref{lem:equiv same type and products equal}. Lemma~\ref{lem:GRs depends only on idpt} finishes the proof that
$\alpha\GRs\beta$, that is, (1) holds. That $\GRt$ is a left congruence then follows from the fact that in any semigroup $S$ the relation $\GRs$ is a left congruence. By point (5) the congruence classes are as given.
\end{proof}

We now turn our attention toward the relations $\GLs$ and $\GLt$.

\begin{proposition}\label{prop:GLt rel}
	The $\GLt$-classes of $\EndTn$ are $\EndTn\setminus (E_3\cup E_2 \cup E_1)$ and all the singletons $\set{\eta}$ where $\eta=\eta^2 \neq \psid$.
\end{proposition}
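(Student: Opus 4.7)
The plan is to characterise, for each $\alpha\in\EndTn$, the set of its idempotent right identities
\[
\mathcal{R}(\alpha) := \{\eta\in E(\EndTn) : \alpha\eta = \alpha\},
\]
and to note that, by definition, $\alpha\GLt\beta$ if and only if $\mathcal{R}(\alpha)=\mathcal{R}(\beta)$. All products below are computed via Corollary~\ref{cor:multiplication in ETn}, with identifications justified by Lemma~\ref{lem:unicity of writing}.

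First I would establish that $\mathcal{R}(\alpha)=\{\psid\}$ whenever $\alpha\in\AutTn\cup A\cup B\cup C=\EndTn\setminus(E_3\cup E_2\cup E_1)$. For $\alpha\in\AutTn$ this is immediate by rank, since every $\eta\in E(\EndTn)\setminus\{\psid\}$ has rank at most three. For $\alpha=\pte\in A$, the equation $\alpha\puf=\pte[u^2,f]=\pte$ forces $u^2=t\in\An\setminus\{\id\}$; but when $\puf$ is idempotent, $u^2\in\{\id,u\}$, and the case $u^2=u$ would force the idempotent $u=f$ to equal $t\in\An$, hence to equal $\id$, a contradiction. For $\alpha=\pte\in B\cup C$ the product $\alpha\puf=\pee[f]$ has rank one while $\alpha$ satisfies $t\neq e$, so equality is impossible. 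Hence all elements of $\AutTn\cup A\cup B\cup C$ share the set $\{\psid\}$ and lie in a single $\GLt$-class.

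Next I would compute $\mathcal{R}(\eta)$ for each non-trivial idempotent $\eta$ and verify pairwise distinctness. The multiplication formulae yield $\mathcal{R}(\eta)=\{\psid,\eta\}$ for $\eta\in E_3$ (using $\eta\puf=\puf$); $\mathcal{R}(\pte[\id,e])=\{\psid,\pte[\id,e]\}\cup\{\pte[s,e]\in E_3\}$ for $\pte[\id,e]\in E_2$ (using $\pte[\id,e]\puf=\pte[u^2,f]$, equal to $\pte[\id,e]$ iff $u^2=\id$ and $f=e$); $\mathcal{R}(\pee)=\{\psid,\pee,\pte[\id,e]\}\cup\{\pte[s,e]\in E_3\}$ for $\pee\in E_1\setminus\{\phid\}$ (using $\pee\puf=\pte[f,f]$, equal to $\pee$ iff $f=e$); and $\mathcal{R}(\phid)=\{\psid,\phid\}\cup E_3\cup E_2$ (using $\phid\puf=\pte[u^2,u^2]$, equal to $\phid$ iff $u^2=\id$). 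Since every such $\mathcal{R}(\eta)$ strictly contains $\{\psid\}$, no non-trivial idempotent lies in the big class above. Pairwise distinctness among $\eta,\eta'\in E_3\cup E_2\cup E_1$ then follows by inspecting the intersections $\mathcal{R}(\eta)\cap E_k$ with each block $E_k$: distinct elements of the same block produce distinct singleton intersections (except for $\phid$, whose intersection is the whole of $E_3\cup E_2$), and different blocks give different patterns of nonempty intersections.

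The main technical point will be the asymmetry between $\pee$ and $\pte[\id,e]$: the computation $\pee\cdot\pte[\id,e]=\pee$ places $\pte[\id,e]$ into $\mathcal{R}(\pee)$, while the reverse $\pte[\id,e]\cdot\pee=\pte[e^2,e]=\pte[e,e]=\pee\neq\pte[\id,e]$ shows that $\pee\notin\mathcal{R}(\pte[\id,e])$. This one-way inclusion is precisely what separates the $\GLt$-classes of $\pee$ and $\pte[\id,e]$ despite these idempotents being built from the same underlying element $e$, and is the subtlest step in verifying distinctness.
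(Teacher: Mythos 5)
Your proof is correct. The first half --- showing that every $\alpha\in\AutTn\cup A\cup B\cup C$ has $\psid$ as its only idempotent right identity --- is essentially the paper's argument: both rest on the multiplication in Corollary~\ref{cor:multiplication in ETn} (the paper phrases the $A$ case as $\alpha\eta=\eta^+$ having rank at most $2$, whereas you solve $u^2=t$ directly and rule out $u^2\in\set{\id,u}$; these are interchangeable). Where you genuinely diverge is the idempotent part. The paper disposes of it in two lines via a general fact recorded at the start of Section~\ref{sec:extended Greens}: regular elements that are $\GLt$-related are already $\GL$-related, so by Proposition~\ref{prop:GL rel} distinct idempotents $\neq\psid$ cannot share a $\GLt$-class. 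You instead compute the full set $\mathcal{R}(\eta)$ of idempotent right identities for each of the four kinds of singular idempotent ($E_3$, $E_2$, $E_1\setminus\set{\phid}$, and $\phid$) and separate these sets by inspecting their intersections with the blocks $E_3$, $E_2$, $E_1$ --- in effect re-deriving a right-identity analogue of Lemma~\ref{lem:left identities}. Your computations all check out against Corollary~\ref{cor:multiplication in ETn} and Lemma~\ref{lem:unicity of writing}, including the asymmetry $\pee\,\pte[\id,e]=\pee$ while $\pte[\id,e]\,\pee=\pee\neq\pte[\id,e]$ that you single out. Your route is longer but self-contained and yields strictly more information (the explicit sets $\mathcal{R}(\eta)$); the paper's route is shorter and makes clear that the idempotent singletons are forced by regularity together with the triviality of $\GL$ alone, with no need for the fine multiplication data. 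If you wished to compress your case analysis, observe that $\eta\in\mathcal{R}(\eta)$ for every idempotent $\eta$, so $\eta\GLt\zeta$ for idempotents $\eta,\zeta$ immediately gives $\eta\zeta=\eta$ and $\zeta\eta=\zeta$, i.e.\ $\eta\GL\zeta$, and Proposition~\ref{prop:GL rel} then finishes --- which is exactly the paper's shortcut.
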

\begin{proof}
In order to show that $\alpha\GLt\beta$ for some $\alpha,\beta\in\EndTn$, we need to show that they have the same idempotents as right identities. 

If  $\alpha\in\AutTn$ then clearly $\alpha\eta = \alpha$ if and only if $\eta = \psid$.

If $\alpha\in A$ and $\eta=\puf\in E(\EndTn)$, then 
 by Lemma~\ref{lem:operations on sets} we have that $\alpha\eta = \eta^+\neq \alpha$, since $\eta^+$ has rank at most $2$.
 Thus the only idempotent right identity for $\alpha$ is $\psid$. A similar argument shows that $\psid$ is the only idempotent right identity for
 any element of $B\cup C$. 
 
 Turning our attention to the idempotents, we have remarked at the start of this section that idempotents are $\GLt$-related if and only if they are $\GL$-related, and hence by 
Proposition~\ref{prop:GL rel} if and only if they are equal. Since any idempotent is a right identity for itself, the result follows.
\end{proof}

\begin{remark}
Unlike the situation for $\GRs$ and $\GRt$, we find that $\GLs$ is a strictly smaller relation than $\GLt$. 
 Indeed, in general, the relation $\GLs$ on a semigroup $S$ is well-known to be a right congruence. However, it is easy to see that $\GLt$ is \emph{not} a right congruence. Indeed, taking $\alpha \in A$, $\beta \in B$ and $\pte \in E_3$ we find that $\alpha \GLt \beta$ whilst $\alpha\pte = \phid$ and $\beta\pte = \pee$ are distinct idempotents, and hence not $\GLt$-related.
\end{remark}

In what follows, for each $\perte\in P_n$ we write $\Fxte = \set{g\in \Sn\colon t^g = t \text{ and } e^g=e}$. 
Notice in particular that $\Fxe[\id,e] = \Fxe[e,e] $ for all $e^2=e$ and $\Fxe[\id, \id] = \AutTn$.
The next lemma is immediate.

\begin{lem}\label{lem:what fix does} For any $\pte\in \singn$ and $\psi_g\in \AutTn$ we have that
$\pte\psi_g=\pte$ if and only if $g\in \Fxte$.\end{lem}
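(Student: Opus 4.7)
The plan is to combine two earlier results: the explicit description of right multiplication by automorphisms from Corollary~\ref{cor:multiplication in ETn}(3), and the uniqueness-of-representation statement from Lemma~\ref{lem:unicity of writing}. Since the excerpt assumes we are working in $\singn$ (so, for $n\geq 5$, these are precisely the maps of rank at most $3$), every singular endomorphism has a unique representation as $\phi_{t,e}$ with $(t,e)\in P_n$, which is exactly what makes the equivalence with membership in $\Fxte$ a direct identification.

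First, starting from $\pte\in \singn$ and $\psi_g\in\AutTn$, I would apply Corollary~\ref{cor:multiplication in ETn}(3) to rewrite
\[
\pte\psi_g = \pte[t^g,e^g].
\]
Note that this product lies in $\singn$ since $(t^g,e^g)\in P_n$ by Lemma~\ref{lem:facts on U}\eqref{enum:fact on conjugated pair}, so the right-hand side is a bona fide element of $\EndTn$ of the required form.

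Next, I would invoke Lemma~\ref{lem:unicity of writing}, which tells us that two maps of the form $\phi_{t,e}$ and $\phi_{u,f}$ in $\singn$ coincide if and only if $t=u$ and $e=f$. Applied here, $\pte[t^g,e^g]=\pte$ is equivalent to the pair of conditions $t^g=t$ and $e^g=e$, which is by definition exactly the statement $g\in\Fxte$.

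Chaining these two equivalences gives $\pte\psi_g=\pte \iff g\in\Fxte$, which is the desired result. No real obstacle is expected: both cited results do all the work, and the whole argument fits in a couple of lines of display math. The only thing to be slightly careful about is to note explicitly that the conjugated pair $(t^g,e^g)$ lies in $P_n$ (so that Lemma~\ref{lem:unicity of writing} can be applied to both sides), but this is covered by Lemma~\ref{lem:facts on U}\eqref{enum:fact on conjugated pair}.
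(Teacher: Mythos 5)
Your proof is correct and matches the paper's intended argument exactly: the paper declares the lemma ``immediate,'' and the implicit reasoning is precisely your combination of Corollary~\ref{cor:multiplication in ETn}(3) (giving $\pte\psi_g=\pte[t^g,e^g]$) with Lemma~\ref{lem:unicity of writing} (giving $\pte[t^g,e^g]=\pte \iff t^g=t$ and $e^g=e$). Your extra care in citing Lemma~\ref{lem:facts on U}\eqref{enum:fact on conjugated pair} to confirm $(t^g,e^g)\in P_n$ is a sound (if unstated in the paper) touch.
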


We can now give the description of $\GLs$. 

\begin{prop}\label{prop:GLs rel}
Let  $\alpha, \beta\in \EndTn$. Then $\alpha\GLs\beta$ if and only if one of the following occurs:
\begin{itemize}
\item $\alpha, \beta\in \AutTn$;
\item $\alpha, \beta \in E_3\cup E_2\cup E_1$ and $\alpha = \beta$; or
\item $\alpha, \beta \in A\cup B\cup C$ are such that $\alpha = \pte$, $\beta=\puf$ have the same type and $\Fxte = \Fxe[u,f]$.
\end{itemize}
\end{prop}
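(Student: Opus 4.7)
The proof will divide along the trichotomy in the statement, handling one bullet at a time; the first two cases reduce to results already proved, and the real work is in the third.

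For the case $\alpha,\beta\in\AutTn$: since $\AutTn$ is the group of units (a single $\GL$-class) and $\GL\subseteq \GLs$, it is contained in a single $\GLs$-class. To show it \emph{is} an $\GLs$-class, observe that $\psid$ is left-cancellable. If $\beta\in\singn$, pick any two distinct idempotents $\eta_1,\eta_2$ of the same type (which exist in abundance for $n\geq 5$ by Example~\ref{ex:nonemptyparts}): then $\beta\eta_1 = \beta\eta_2$ by Lemma~\ref{lem:equiv same type and products equal}, while $\psid\eta_1 = \eta_1 \neq \eta_2 = \psid\eta_2$, so $\psid\not\GLs\beta$. For the case $\alpha\in E_3\cup E_2\cup E_1$, Proposition~\ref{prop:GLt rel} tells us that $\{\alpha\}$ is an $\GLt$-class, and from $\GLs\subseteq\GLt$ this is automatically a singleton $\GLs$-class as well.

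For the main case $\alpha=\pte,\beta=\puf\in A\cup B\cup C$ with the proposed characterisation, I would first prove sufficiency. Assuming same type and $\Fxte=\Fxe[u,f]$, I verify $\alpha\gamma_1=\alpha\gamma_2\ssi\beta\gamma_1=\beta\gamma_2$ by casework on where $\gamma_1,\gamma_2$ lie. If both are in $\singn$, Lemma~\ref{lem:equiv same type and products equal} together with the same-type hypothesis gives $\alpha\gamma_i=\beta\gamma_i$, making the biconditional trivial. If both are automorphisms $\psi_{g_1},\psi_{g_2}$, Corollary~\ref{cor:multiplication in ETn}(3) gives $\alpha\psi_{g_1}=\alpha\psi_{g_2}$ iff $(t^{g_1},e^{g_1})=(t^{g_2},e^{g_2})$ iff $g_1g_2^{-1}\in\Fxte$, and likewise for $\beta$, so $\Fxte=\Fxe[u,f]$ closes the iff. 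In the mixed case $\gamma_1=\psi_g\in\AutTn$, $\gamma_2\in\singn$, a rank comparison via Corollary~\ref{cor:multiplication in ETn} shows $\alpha\gamma_1$ has the rank of $\alpha$ while $\alpha\gamma_2$ has strictly smaller rank (since elements of $A\cup B\cup C$ have even, non-permutation, or trivial type, under which left multiplication by a singular element drops the rank), so both $\alpha$- and $\beta$-products are unequal and both sides are false.

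For necessity, suppose $\alpha\GLs\beta$. The preceding two cases already force $\beta\in A\cup B\cup C$ (an automorphism $\beta$ would force $\alpha$ to be left-cancellable, contradicted above; a singular idempotent $\beta$ gives $\alpha=\beta$ by the singleton $\GLt$-class argument, contradicting $\alpha\in A\cup B\cup C$). Restricting the $\GLs$-condition to pairs of automorphisms, the coset analysis above yields $\Fxte=\Fxe[u,f]$ directly. For the same-type conclusion, I would construct distinguishing witnesses $\gamma_1,\gamma_2\in\singn$ from Example~\ref{ex:nonemptyparts}: to separate even from non-permutation type, choose $\gamma_i=\pte[u_i,f]$ with a common $f$-component but $u_1^2\neq u_2^2$, so that $\alpha\gamma_1=\alpha\gamma_2$ iff $\alpha$ has non-permutation type (by the case distinction in Corollary~\ref{cor:multiplication in ETn}(4)).

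The main obstacle I anticipate is this last step: producing the witnesses that separate each pair of types (and confirming none is mistakenly conflated by the fact that $B$ and $C$ share non-permutation type, although there the characterisation correctly allows $\alpha\in B$ and $\beta\in C$ to be $\GLs$-related). This is essentially combinatorial and handled by inspection of cases using Example~\ref{ex:nonemptyparts}, but needs to be done carefully to cover all type pairs (even vs.\ non-permutation, and any residual cross-comparisons) without redundancy.
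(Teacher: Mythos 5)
Your overall strategy is the same as the paper's (reduce the first two bullets to the group of units and to the singleton $\GLt$-classes of Proposition~\ref{prop:GLt rel}, then handle $A\cup B\cup C$ by splitting right multipliers into singular pairs, automorphism pairs, and mixed pairs), but there is one concrete error in your separation of $\AutTn$ from the singular elements. You pick \emph{any} two distinct idempotents $\eta_1,\eta_2$ of the same type and claim $\beta\eta_1=\beta\eta_2$ ``by Lemma~\ref{lem:equiv same type and products equal}''. That lemma is about \emph{left} factors: it says two same-type singular elements satisfy $\eta_1\gamma=\eta_2\gamma$ for all singular $\gamma$; it says nothing about $\beta\eta_1$ versus $\beta\eta_2$, which by part (4) of Corollary~\ref{cor:multiplication in ETn} depend on the \emph{components} of the $\eta_i$, not merely their type. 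Indeed the claim is false for arbitrary same-type pairs: for $\beta\in A$ (even type) and distinct $\eta_i=\pte[\id,e_i]\in E_2$ one computes $\beta\eta_i=\pte[\id,e_i]$, which remain distinct; similarly no two distinct elements of $E_1$ are collapsed by any $\beta$, and for $\beta$ of odd type (which your ``$\beta\in\singn$'' formally includes) no collapsing pair exists at all, since such $\beta$ is a left identity on $\singn$. The conclusion you want is still true, and the repair is small but necessary: the witnesses must have matched components. The paper takes $t=(1\,2)$, $e=c_3$ and uses the triple $\pte$, $\phid[e]$, $\pee$, for which $\beta\pte=\phid[e]=\beta\phid[e]$ when $\beta\in A$ and $\beta\pte=\pee=\beta\phid[e]$ when $\beta\in B\cup C$, while an automorphism, being a unit, separates all three. (The odd-type case does not need a witness, since your second bullet already makes each singular idempotent a singleton class.)

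Apart from this, your argument is essentially the paper's own. The sufficiency casework is correct: Lemma~\ref{lem:equiv same type and products equal} is properly applied there (with $\alpha,\beta$ as the same-type \emph{left} factors), the coset computation $\alpha\psi_{g_1}=\alpha\psi_{g_2}\ssi g_1g_2^{-1}\in\Fxte$ matches the paper's use of Lemma~\ref{lem:what fix does} (via Lemma~\ref{lem:unicity of writing}), and the rank comparison disposes of the mixed case exactly as in the paper. For necessity, your type-distinguishing witnesses $\pte[u_i,f]$ with common $f$ and $u_1^2\neq u_2^2$ do work, and your anticipated ``combinatorial obstacle'' is smaller than you fear: elements of $A\cup B\cup C$ carry only \emph{two} types (even on $A$, non-permutation on $B\cup C$), so even versus non-permutation is the only comparison required, and the fact that $B$ and $C$ share a type is, as you note, consistent with the statement (the paper later exhibits $\GLs$-related elements of $B$ and $C$ in the proof of Proposition~\ref{prop:GDs and GJs rel}).
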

\begin{proof}
Since $\GL\subseteq \GLs\subseteq \GLt$, we have that all elements of $\AutTn$ are $\GLs$-related, and that idempotents of $\EndTn$ distinct from $\psid$ form their own $\GLt$-class by Proposition \ref{prop:GLt rel} and thus they also form their own $\GLs$-class.
Therefore, it only remains to show that elements of $\AutTn$ cannot be $\GLs$-related to non-regular elements (that is, elements of $A\cup B\cup C$) and that two non-regular elements are $\GLs$-related if and only if they have the same type and are fixed by the same automorphisms of $\AutTn$.

To see first that no elements of $\AutTn$ can be $\GLs$-related to elements of $A\cup B\cup C$, consider $\alpha\in\AutTn$ and $\beta\in A\cup B\cup C$, and let $t=(1\,2), \, e=c_3\in\TXn$.
Since $t^2=\id$ and $\perte\in P_n$ we have $\pte, \phid[e],\pee\in\EndTn$.
Clearly $\alpha\pte, \alpha\phid[e]$ and $\alpha\pee$ are all distinct.
If $\beta\in A$, then we have that $\beta\pte = \phid[e] = \beta\phid[e]$, while if $\beta\in B\cup C$ we have $\beta\pte = \pee = \beta\phid[e]$.
Therefore elements of $\AutTn$ cannot be $\GLs$-related to elements of $A\cup B\cup C$.

From now on, we assume that $\alpha, \beta$ are non-regular (i.e. contained in $A \cup B \cup C$), and that $\alpha=\pte[v,k]$ and $\beta = \puf$ for some $\perte[v,k], \perte[u,f]\in P_n$.

Suppose that $\alpha\GLs\beta$ and consider the maps $\pte$, $\phid[e]$ and $\pee$ in $\EndTn$ as above.
To see that $\alpha$ and $\beta$ must have the same type, notice that if $\alpha\in A$ and $\beta\in B\cup C$, then $\alpha\pte=\phid[e]\neq \pee = \alpha\pee$ while $\beta\pte = \pee = \beta\pee$  which contradicts the fact that $\alpha\GLs\beta$.
Thus either $\alpha,\beta\in A$, or $\alpha,\beta\in B\cup C$ which shows that $\GLs$-related maps must be of the same type.
Lemma~\ref{lem:what fix does} gives that $\Fxe[v,k]=\Fxe[u,f]$. 

Conversely, assume that $\alpha$ and $\beta$ have the same type and that $\Fxe[v,k] = \Fxe[u,f]$.
By Lemma~\ref{lem:equiv same type and products equal} we have that $\alpha\eta = \beta\eta$ for all $\eta\in \singn$. 
Suppose now that $\psi_g,  \psi_h$ in $\AutTn$. If $\alpha\psi_g=\alpha\psi_h$, then $\alpha\psi_{gh^{-1}}=\alpha$.
It follows that  $gh^{-1}\in \Fxe[v,k]=\Fxe[u,f]$ and so $\beta\psi_g=\beta\psi_h$.
Finally, it is easy to see that if $\alpha\in A\cup B\cup C$, $\gamma\in\AutTn$ and $\delta\in \singn$, then
as the rank of $\alpha\gamma$ is the same as the rank of $\alpha$, but the rank of $\alpha\delta$ is strictly less than the rank of $\alpha$, we cannot have that 
$\alpha\gamma=\alpha\delta$.  
Using the  symmetry in  the arguments used above concludes the proof.
\end{proof}

\begin{defn}\label{defn:abundant} A semigroup $S$ is left [right] abundant if every $\GRs$-class [resp. $\GLs$-class] contains an idempotent, while it is left [right] Fountain if every $\GRt$-class [resp. $\GLt$-class] contains an idempotent.
\end{defn}

\begin{prop}\label{prop:abundant} For $n \geq 5$ the semigroup  $\EndTn$ is left abundant (and hence left  Fountain), right Fountain but not right abundant.
\end{prop}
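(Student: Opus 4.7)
The plan is to split the proposition into four assertions and read each off the machinery of Propositions~\ref{prop:GRs and GRt rel},~\ref{prop:GLt rel} and~\ref{prop:GLs rel}, together with the non-emptiness information recorded in Example~\ref{ex:nonemptyparts}. Since $\GRs = \GRt$ by Proposition~\ref{prop:GRs and GRt rel}, left abundance and the left Fountain property coincide, so it suffices to exhibit an idempotent in each of the four $\GRs$-classes: $\psid$ lies in $\AutTn$; any member of $E_3$ lies in the class $E_3 \cup A \cup B$ (with $E_3$ non-empty by Example~\ref{ex:nonemptyparts}); any member of $E_2$ lies in $E_2 \cup C$ (again non-empty); and $E_1$ consists entirely of idempotents.

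For the right Fountain property I would invoke Proposition~\ref{prop:GLt rel}. The singleton classes $\{\eta\}$ with $\eta = \eta^2 \neq \psid$ are by definition idempotent, while the remaining class $\EndTn \setminus (E_3 \cup E_2 \cup E_1)$ contains $\psid$, since $\psid \in \AutTn$ lies outside the three singular idempotent sets. This handles every $\GLt$-class.

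To refute right abundance, I would pick any $\alpha \in A \cup B \cup C$, which is non-empty for $n \geq 5$ by Example~\ref{ex:nonemptyparts}. Proposition~\ref{prop:GLs rel} forces any element $\GLs$-related to $\alpha$ to share its type and therefore to lie in $A \cup B \cup C$ as well. Since Lemma~\ref{lem:idempotents of EndTn} places every idempotent of $\EndTn$ in $\{\psid\} \cup E_3 \cup E_2 \cup E_1$, which is disjoint from $A \cup B \cup C$, the $\GLs$-class of $\alpha$ contains no idempotent.

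The whole argument is essentially bookkeeping over class descriptions already proved in this section; there is no genuine obstacle. The only point requiring slight vigilance is ensuring that the $\GLs$-class of an element of $A \cup B \cup C$ cannot leak into the idempotent-bearing parts of $\EndTn$, but Proposition~\ref{prop:GLs rel} rules this out by preventing any $\GLs$-relation between a non-regular element and either an automorphism or a singular idempotent.
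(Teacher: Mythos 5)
Your proof is correct and takes essentially the same route as the paper, whose own proof simply cites Propositions~\ref{prop:GRs and GRt rel}, \ref{prop:GLt rel} and \ref{prop:GLs rel} without spelling out the details; you have merely made the bookkeeping explicit. In particular, your check that the $\GLs$-class of an element of $A \cup B \cup C$ cannot contain an idempotent is exactly the content those class descriptions are meant to deliver, and your use of $\GRs = \GRt$ to merge left abundance with the left Fountain property is sound (indeed left abundance implies left Fountain in any semigroup, since $\GRs \subseteq \GRt$).
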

\begin{proof} The result follows from Propositions~\ref{prop:GRs and GRt rel}, ~\ref{prop:GLt rel} and ~\ref{prop:GLs rel}.
\end{proof}

\begin{prop}
\label{prop:GDs and GJs rel}
The $\GDs$-classes of $\EndTn$ are $\AutTn$, $E_1$ and $\EndTn\setminus (\AutTn\cup E_1)$ and  
 further, $\GDs=\GJs$.
\end{prop}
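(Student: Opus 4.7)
The strategy is to use the explicit descriptions of $\GRs$ in Proposition~\ref{prop:GRs and GRt rel} and of $\GLs$ in Proposition~\ref{prop:GLs rel} to compute $\GDs = \GLs\vee\GRs$ directly, and then to deduce $\GDs = \GJs$ by comparing saturated principal ideals.

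By Proposition~\ref{prop:GRs and GRt rel} the $\GRs$-classes of $\EndTn$ are $\AutTn$, $E_3\cup A\cup B$, $E_2\cup C$ and $E_1$. By Proposition~\ref{prop:GLs rel}, $\AutTn$ is also a single $\GLs$-class while each non-$\psid$ idempotent is a singleton $\GLs$-class; consequently both $\AutTn$ and $E_1$ are $\GLs$-saturated as well as $\GRs$-saturated, so each is immediately a $\GDs$-class on its own.

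The main content is therefore to show that the remaining two $\GRs$-classes fuse into a single $\GDs$-class. Since each is already a single $\GRs$-class, it suffices to exhibit one $\GLs$-linked pair joining them. By Proposition~\ref{prop:GLs rel}, such a link is possible only between non-idempotent elements of matching type, and the only type shared between the two $\GRs$-classes in question is the non-permutation type, carried by $B$ and $C$ respectively. I will construct such a pair explicitly by taking $e = f = c_1$, choosing $t$ whose functional graph has fixed points together with the transposition $(2\,3)$ and the arc $4\mapsto 5$ (so $\pte\in B$), and choosing $u$ to be the idempotent obtained from the graph of $t$ by replacing the transposition $(2\,3)$ with the two leaves $2\mapsto 1$, $3\mapsto 1$ (so $\puf\in C$). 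The step I expect to be the main obstacle is verifying that $\Fxte$ and $\Fxe[u,f]$ coincide exactly as subgroups of $\Sn$; a direct enumeration of the permutations fixing $1$ and commuting with each of $t, u$ should reveal that both equal $\langle(2\,3)\rangle\times\Sym(\{6,\ldots,n\})$, the matching of the transposition of $t$ with the pair of leaves of $u$ being what makes the coincidence work.

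Finally, $\GDs\subseteq\GJs$ is immediate since $J^*(\alpha)$ is, by definition, an ideal saturated by $\GLs$ and $\GRs$ and so contains the whole $\GDs$-class of $\alpha$. For the reverse it suffices to check that the three $\GDs$-classes yield three distinct saturated principal ideals: $J^*(\psi_g) = \EndTn$ for any automorphism; $J^*(\alpha) = E_1$ for $\alpha\in E_1$, since by Proposition~\ref{prop:principal ideals} the principal ideal of such $\alpha$ equals $E_1$ and this set is saturated by both $\GLs$ and $\GRs$; and $J^*(\alpha) = \EndTn\setminus\AutTn$ for singular $\alpha\notin E_1$, since then the principal ideal of $\alpha$ contains $E_1$ by Proposition~\ref{prop:principal ideals}, the $\GDs$-merge above forces $J^*(\alpha)\supseteq \EndTn\setminus(\AutTn\cup E_1)$, and $\EndTn\setminus\AutTn$ is itself an ideal (Lemma~\ref{lem:partition of EndTn}) that is $\GLs$- and $\GRs$-saturated by the class descriptions.
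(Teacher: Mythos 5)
Your proposal is correct and follows essentially the same route as the paper's proof: both establish $\AutTn$ and $E_1$ as $\GDs$-classes directly from the descriptions of $\GRs$ (Proposition~\ref{prop:GRs and GRt rel}) and $\GLs$ (Proposition~\ref{prop:GLs rel}), fuse the rank-$3$ and rank-$2$ $\GRs$-classes by exhibiting a non-permutation-type pair $\beta\in B$, $\gamma\in C$ with equal fixators, and then obtain $\GJs=\GDs$ by checking that the three classes are separated by the $\GLs$- and $\GRs$-saturated ideals $\EndTn$, $\EndTn\priv\AutTn$ and $E_1$. The only divergence is your explicit witness pair (the paper takes $t$ with $4\mapsto 1$ and $u$ fixing $4$ with $i\mapsto 4$ for $i\geq 5$, whereas you keep the tail $4\mapsto 5$ in both maps), and your pair does verify: both fixators equal $\langle (2\,3)\rangle$ together with arbitrary permutations of $\set{6,\ldots,n}$, fixing $1$, $4$ and $5$.
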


\begin{proof}
By Propositions~\ref{prop:GRs and GRt rel} and~\ref{prop:GLs rel} we know that the elements of $\AutTn$ form a single $\GRs$-class and a single $\GLs$-class. Thus it follows that $\AutTn$ is also a $\GDs$-class.

 Let $\alpha \in E_1$ and supose that $\alpha \GLs \circ \GRs \delta$ for some $\delta \in \singn$. Then there exists $\eta \in \singn$ such that $\alpha \GLs \eta \GRs \delta$. By Proposition~\ref{prop:GLs rel} we find that $\eta = \alpha \in E_1$, and by Proposition~\ref{prop:GRs and GRt rel} we see that $\delta \in E_1$. It follows from this argument together with the fact that $E_1$ is an $\GRs$-class that $E_1$ is also a $\GDs$-class. 

Suppose then that $\alpha \in \EndTn \priv (\AutTn \cup E_1)$ and $\alpha \GDs \delta$. It follows from the above that $\delta \in \EndTn \priv (\AutTn \cup E_1)$. We aim to show that if $\alpha, \delta \in \EndTn\priv(\AutTn \cup E_1)$ then $\alpha \GDs \delta$. 

Notice that by our partition of $\EndTn$ we have that $\EndTn \priv (\AutTn \cup E_1) = (E_3 \cup A\cup B )\cup (E_2 \cup C)$, is the union of all elements of rank $3$ and all elements of rank $2$. Moreover, it follows from Proposition~\ref{prop:GRs and GRt rel} that if $\alpha, \delta$  either both have rank $3$ or both have rank $2$, then $\alpha \GRs \delta$, and hence $\alpha \GDs \delta$. We show that there is an element $\beta\in B$ of rank $3$ and $\gamma\in C$ of rank $2$ such that $\beta\GLs \gamma$, which will complete the description of $\GDs$.  

We recall from Example \ref{ex:nonemptyparts} that for 
\begin{gather*}
t = \pmap{1 & 2 & 3 & 4 & i_{\geq 5}\\1 & 3 & 2 & 1 & i}, \quad 
u = \pmap{1 & 2 & 3 & 4 & i_{\geq 5}\\1 & 1 & 1 & 4 & 4}, \quad  
f = \pmap{1 & 2 & 3 & 4 & i_{\geq 5}\\1 & 1 & 1 & 1 & 1},
\end{gather*}
we have $\beta :=\pte[t,f] \in B$ and $\gamma := \pte[u,f] \in C$, so that $\beta$ and $\gamma$ have the same type.  We show that $\Fxe[t,f] = \Fxe[u,f]$, so that $\beta \GLs\gamma$ by Proposition \ref{prop:GLs rel}.

Since $f = c_1$ we see that that $\Fxe[t,f] = \{ g \in \Sn: gt = tg \mbox{ and } 1g =1\}$. If $g \in \Fxe[t,f]$ we therefore have
$1=1g=4tg=4gt$ and hence (since $g$ is a permutation and $1g=1$) $4g=4$. For all $i \geq 5$ we have $ig=itg = igt$, that is, $ig$ is fixed by $t$ from which it follows that $ig \geq 5$ for all $i \geq 5$. Finally we have $2g=3gt$ and $3g=2gt$. Thus
$$\Fxe[t,f] = \{g, (2\,3)g: g \in \Sn, ig= i \mbox{ for } 1 \leq i \leq 4 \}.$$

Similarly $\Fxe[u,f] = \{ g \in \Sn: gu = ug \mbox{ and } 1g =1\}$. If $g \in \Fxe[u,f]$ we therefore have $1=1g=1gu=2gu=3gu$ and hence (since $g$ is a permutation and $1g=1$) we must have $\{2g, 3g\} = \{2,3\}$. Thus for all $i \geq 4$ we have that $ig \geq 4$. Moreover, since $u$ fixes $4g$ we must have $4g=4$ since $4$ is the only value distinct from $1$ that is fixed by $u$. It is then easy to see that 
$$\Fxe[u,f] = \{g, (2\,3)g: g \in \Sn, ig= i \mbox{ for } 1 \leq i \leq 4 \} = \Fxe[t,f].$$

We now look at the $\GJs$ relation. 

We know that $\GDs\subseteq \GJs$. Clearly for any $\beta\in E_1$ we have
$E_1=\EndTn \beta \EndTn$ is saturated by $\GDs$. It only remains to show that for 
some $\alpha\in  \EndTn\setminus (\AutTn\cup E_1)$ we have that $\EndTn\alpha\EndTn$ is saturated by $\GDs$. 
Taking 
any $\alpha\in E_3$ and calling upon Propositions~\ref{prop:GRs and GRt rel} and ~\ref{prop:GLs rel} yields the result.
\end{proof}

Finally,  the $\GDt$ and $\GJt$-relations are only composed of two classes: the minimal ideal $E_1$ and its complement $\EndTn\setminus E_1$.

\begin{prop}\label{prop:GDt and GJt rel}
 The $\GDt$-classes of $\EndTn$ are $\EndTn\priv E_1$ and $E_1$ and further,  $\GJt=\GDt$.
\end{prop}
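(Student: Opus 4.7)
The plan is to first identify the $\GDt$-classes by computing the join $\GLt \vee \GRt$ using the descriptions in Propositions~\ref{prop:GLt rel} and~\ref{prop:GRs and GRt rel}, and then to verify $\GJt = \GDt$ by determining which ideals of $\EndTn$ (as classified in Corollary~\ref{cor:ideals of EndTn}) are simultaneously saturated by $\GLt$ and $\GRt$.

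For the $\GDt$-classes, recall that the $\GLt$-classes are the ``big'' class $B_L := \AutTn \cup A \cup B \cup C$ together with the singleton $\{\eta\}$ for each $\eta \in E_3 \cup E_2 \cup E_1$, whereas the $\GRt$-classes are $\AutTn$, $E_3 \cup A \cup B$, $E_2 \cup C$, and $E_1$. Starting from any automorphism, a $\GLt$-step within $B_L$ reaches any element of $A \cup B$, whose $\GRt$-class $E_3 \cup A \cup B$ then absorbs $E_3$; similarly an element of $C$ in $B_L$ is $\GRt$-related to any element of $E_2$. Thus $\AutTn,A,B,C,E_3,E_2$ all merge into one $\GDt$-class. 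On the other hand, $E_1$ is an entire $\GRt$-class and each of its elements is a $\GLt$-singleton, so $E_1$ is closed under the join and forms its own $\GDt$-class. This yields the stated $\GDt$-classes $\EndTn \priv E_1$ and $E_1$.

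For $\GJt = \GDt$: the inclusion $\GDt \subseteq \GJt$ is automatic, so it suffices to show that $\widetilde{J}(\alpha) = E_1$ for $\alpha \in E_1$ and $\widetilde{J}(\alpha) = \EndTn$ otherwise. For the first, $E_1$ is the minimal ideal of $\EndTn$ by Proposition~\ref{cor:Ei are right-zero}, is a union of singleton $\GLt$-classes, and is itself a $\GRt$-class, hence is saturated by both relations. For the second, suppose $\alpha \in \EndTn \priv E_1$ lies in an ideal $I$ saturated by $\GLt$ and $\GRt$. If $\alpha \in E_3 \cup E_2$ then $\GRt$-saturation forces $I$ to contain the whole $\GRt$-class of $\alpha$, which meets $A \cup B \cup C \subseteq B_L$; $\GLt$-saturation via $B_L$ then forces $\AutTn \subseteq I$. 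If instead $\alpha \in B_L$ already, the same $\GLt$-argument gives $\AutTn \subseteq I$. Since any ideal containing an automorphism is all of $\EndTn$, we conclude $I = \EndTn$, so $\widetilde{J}(\alpha) = \EndTn$ as required.

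The only subtle point is the second half: one must be careful to check that the proper ideals listed in Corollary~\ref{cor:ideals of EndTn} strictly above $E_1$ all fail to be $\GLt$-saturated, because each of them contains an element of $A \cup B \cup C$ while omitting $\AutTn$. Once this observation is made, the argument collapses to the two-step ``$\GRt$-saturate to hit $B_L$, then $\GLt$-saturate to capture $\AutTn$'' strategy outlined above.
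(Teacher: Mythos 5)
Your proof is correct and follows essentially the same route as the paper: the two $\GDt$-classes are read off from the descriptions of the $\GLt$- and $\GRt$-classes (Propositions~\ref{prop:GLt rel} and~\ref{prop:GRs and GRt rel}), and $\GJt=\GDt$ then follows because $E_1$ is an ideal saturated by both relations. Your additional explicit computation that $\widetilde{J}(\alpha)=\EndTn$ for every $\alpha\notin E_1$ (via the ``$\GRt$-saturate into $A\cup B\cup C$, then $\GLt$-saturate to capture $\AutTn$'' step) simply spells out what the paper's terse ``the result follows'' leaves implicit, and is valid given the nonemptiness of $A$, $B$, $C$ for $n\geq 5$ recorded in Example~\ref{ex:nonemptyparts}.
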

\begin{proof}
Propositions~\ref{prop:GRs and GRt rel} and ~\ref{prop:GLt rel} immediately give us that there are two $\GDt$-classes, namely $\EndTn\priv E_1$ and $E_1$. Since $E_1$ is an ideal, it is a $\GJt$-class. The result follows.
\end{proof}

\section{The structure of \texorpdfstring{{$\EndA[\TXn]$ for $n \leq 4$}}{End(Tn) for n<=4}}\label{sec:degenerate}

In order to have clean statements with uniform proofs, in the previous sections we focussed on the case where $n \geq 5$. To complete the picture, in this section we describe the structure of $\EndTn$ in the cases where $n \leq 4$.  We note that the decomposition in terms of rank and type given in Lemma \ref{lem:partition of EndTn} can also be used to describe the structure of $\EndTn$ in these small cases, however, some of the sets turn out to be empty.  Indeed, $\EndTn[1] = \AutTn[1] = \{\varepsilon\}$ is a trivial group, and $\EndTn[2]$ decomposes as a disjoint union $\EndTn[2] = \AutTn[2] \cup E_2(2) \cup E_1(2)$ where $\AutTn[2] = \{\psid, \psi_{(1\,2)}\}$ is the automorphism group, whilst $E_2(2) = \{\pte[\id, c_1], \pte[\id, c_2]\}$ and $E_1(2) =\{\phid, \pte[c_1, c_1], \pte[c_2, c_2]\}$ are the idempotents of rank $2$ and $1$ respectively. We note in particular that these two semigroups (consisting of group elements and idempotents only) are regular. For $n=3$, the endomorphism monoid $\EndTn[3]$ decomposes as a disjoint union $\EndTn[3] = \AutTn[3] \cup E_3(3) \cup E_2(3) \cup C(3) \cup E_1(3)$.  The elements of $C(3)$ are not regular (note that the reasoning given in the proof of Proposition~\ref{prop:regular elements} also applies here). For the case $n=4$, recall that the endomorphism monoid $\EndTn[4]=\EndA[{\TXn[4]}]$ was described in Lemma~\ref{lem:partition of EndTn} as $\EndTn[4] = \AutTn[4]\cup D(4) \cup E_3(4) \cup B(4) \cup E_2(4)\cup C(4) \cup E_1(4)$ where each set is non-empty and $D(4)$ contains idempotents of rank $7$, namely the elements in the set $E_7(4) = \set{\sigma^g\colon g\in \Klein}$. 

In spite of these differences, certain properties turn out to be common to all endomorphism monoids $\EndTn$. Since their proofs are often akin to those presented in Sections~\ref{sec:idempotent}--\ref{sec:extended Greens} and these results could be obtained by direct enumeration using a computer program such as GAP, we will only highlight the main points that differ or require attention. A detailed version of these proofs can be found in the thesis of the second author \cite{grau}.
\begin{prop}
\label{prop:common}
Let $n$ be a natural number. In the endomorphism monoid $\EndTn$, the following statements hold: 
\begin{enumerate}
\item the set of all idempotents is a band, and forms a rank-ordered chain of
right zero semigroups;
\item the set of idempotents of rank $1$ is the minimal ideal of $\EndTn$;
\item $\GH = \GL \subseteq \GR=\GD=\GJ$;
\item $\GRs = \GRt$ and the classes are the sets of elements with the same rank.
\end{enumerate}
\end{prop}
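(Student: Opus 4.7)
The plan is to verify each of (1)--(4) for the remaining values $n=1,2,3,4$, adapting the arguments of Sections~\ref{sec:idempotent}--\ref{sec:extended Greens}. For $n=1$ the monoid $\EndTn[1]=\set{\psid}$ is trivial and every claim is immediate. For $n=2,3$ several of the type-classes in the partition of Lemma~\ref{lem:partition of EndTn} are empty, which simplifies the analysis. For $n=4$ the additional rank-$7$ endomorphisms in $D(4)$ and idempotents in $E_7(4)$ require separate treatment via Lemma~\ref{lem:mult4}.

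For parts (1) and (2), Lemma~\ref{lem:idempotents of EndTn} already describes the set of idempotents. Using Corollary~\ref{cor:multiplication in ETn} (together with Lemma~\ref{lem:mult4} when $n=4$) I would verify that each $E_k$ is a right-zero semigroup: in particular, for $g,h\in\Klein$ we have $\sigma^g\sigma^h=\sigma^{p_gh}=\sigma^h$, since then $p_g=\id$, making $E_7(4)$ a right-zero semigroup. The same multiplication rules show that higher-rank idempotents act as left identities on lower-rank ones, yielding the rank-ordered chain in (1). For (2), $E_1$ is an ideal by rank considerations and, being a right-zero semigroup, has no proper ideals, so is the minimal ideal.

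For (3), the inclusions $\GH\subseteq\GL\cap\GR$ and the identity $\GD=\GJ$ (in any finite semigroup) are automatic, so the substance is $\GL\subseteq\GR$, which then forces $\GH=\GL$ and $\GD=\GL\vee\GR=\GR$. For singular elements of rank at most three, Lemma~\ref{lem:unicity of writing} forces $\GL$-related elements to have equal image and hence to be equal, so $\GL$ is trivial there. The automorphism group $\AutTn$ forms a single $\GL$- and $\GR$-class. For $n=4$, I would show that $\sigma^g\GL\sigma^h$ can only arise via left multiplication by an element of $\AutTn[4]\cup D(4)$ (since left multiplication by anything else drops the rank), and from Lemma~\ref{lem:mult4} parts (1) and (3) obtain the condition $4g=4h$; then $\sigma^h=\sigma^g\psi_{g^{-1}h}$ via Lemma~\ref{lem:mult4}(2) gives $\sigma^g\GR\sigma^h$, as required.

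For (4), I would adapt the argument of Section~\ref{sec:extended Greens}. For $n\leq 3$ each nonempty type-class contains an idempotent of its rank, so the analogue of Lemma~\ref{lem:GRs depends only on idpt} applies and gives $\GRs=\GRt=$ equality of rank. For $n=4$ the main obstacle is to show that all elements of $D(4)$ form a single $\GRs$-class distinct from the lower-rank classes. Using Lemma~\ref{lem:mult4}, the equalities $\psi_k\sigma^g=\psi_{k'}\sigma^g$, $\sigma^l\sigma^g=\sigma^{l'}\sigma^g$ and $\pte\sigma^g=\puf\sigma^g$ all reduce via Lemma~\ref{lem:unicity of writing} to conditions on $\Klein$-cosets of the relevant elements, or to whether certain elements lie in $\Sn[4]$ --- none of which depend on $g$. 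Hence all $\sigma^g$ share the same left-multiplication kernel and are $\GRs$-equivalent; rank considerations together with Lemma~\ref{lem:idempotents of EndTn} (rank-$7$ and lower-rank elements have different idempotent left identities) separate $D(4)$ from the lower-rank classes. The computational core is this kernel analysis in $D(4)$, made tractable by the coset structure of $\Klein$ in $\Sn[4]$.
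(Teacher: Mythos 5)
Your proposal is correct and follows the same overall architecture as the paper's proof: a case analysis on $n \leq 4$, reuse of the arguments of Sections~\ref{sec:idempotent}--\ref{sec:extended Greens} for $n\leq 3$ (where the relevant point is that there is an idempotent of each \emph{type} that occurs -- your phrase ``an idempotent of its rank'' is slightly off, but the intent is right), and special treatment of $D(4)$ and $E_7(4)$ via Lemma~\ref{lem:mult4}. The one genuinely different route is in part (4): you establish that $D(4)$ is a single $\GRs$-class by a direct kernel computation, checking via Lemma~\ref{lem:unicity of writing} that the equalities $\psi_k\sigma^g=\psi_{k'}\sigma^g$, $\sigma^l\sigma^g=\sigma^{l'}\sigma^g$ and $\pte\sigma^g=\puf\sigma^g$ reduce to conditions on $\Klein$-cosets, or on membership in $\Sn[4]$, that are independent of $g$. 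The paper avoids this computation with a squeeze: having shown in part (3) that $D(4)$ is a single $\GR$-class, and via the left-identity analysis that the idempotent left identities of every element of $D(4)$ are exactly $\set{\psid}\cup E_7(4)$ (so that $D(4)$ is a single $\GRt$-class), the chain $\GR\subseteq\GRs\subseteq\GRt$ immediately forces $D(4)$ to be an $\GRs$-class. Your version is sound and makes explicit \emph{why} the left-multiplication kernels are $g$-independent, at the cost of a case analysis over the three shapes of left factor; the paper's argument is shorter and recycles part (3). (Your use of $\sigma^h = \sigma^g\psi_{g^{-1}h}$ to see that $D(4)$ is a single $\GR$-class is a harmless simplification of the paper's $\sigma^g\sigma^{p_g^{-1}h}=\sigma^h$.)

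One small point to patch in part (1): knowing that each $E_k$ is a right-zero semigroup and that higher-rank idempotents act as left identities on lower-rank ones gives only half of the products needed for the band/chain structure. You must also check that lower-rank times higher-rank lands in the lower class, i.e.\ $E_jE_k\subseteq E_j$ for $j<k$; this does not follow from the left-identity observation, and a rank bound alone does not rule out the product dropping below rank $j$ or failing to be idempotent. Concretely, $E_2E_3\subseteq E_2$ must be checked (as in the proof of Proposition~\ref{cor:Ei are right-zero}), and for $n=4$ the paper explicitly verifies $E_k(4)E_7(4)\subseteq E_k(4)$ for $k\leq 3$: by Lemma~\ref{lem:mult4}(5) one has $\pte\sigma^g = \pte[t\sigma^g, e\sigma^g]$, which is an idempotent of the same rank and type as $\pte$, so the check succeeds -- but it needs to be stated.
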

\begin{proof}
These facts have already been explicitly proven for $n \geq 5$ in the previous sections, and it is straightforward to check that the details go through in just the same way for all $n \neq 4$. Indeed, for all $n \neq 4$, statements (1) and (2) follow in exactly the same way as in the proof of Proposition \ref{cor:Ei are right-zero}, whilst it is readily verified that the characterisation of the $\GL$ and $\GR$ relations given in Section \ref{sec:Green}  ($\alpha \GL \beta$ if and only if $\alpha = \beta$ or $\alpha, \beta \in \AutTn$; and 
$\alpha \GR \beta$ if and only if $\alpha, \beta \in E_k$ for some $k$ or $\alpha, \beta \in \AutTn$ or $\alpha \AutTn = \beta \AutTn$) also hold in these cases, from which statement (3) follows (since $\GL \subseteq \GR$). Part (4) is trivial in the case where $n=1,2$ (since these semigroups are regular) and can be proved in an entirely similar manner in the case $n=3$, using the fact that there is an idempotent of each type.

The case $n=4$ is a little different, since there are extra endomorphisms to consider. We use extensively the multiplication of elements described in Corollary~\ref{cor:multiplication in ETn} and Lemma~\ref{lem:mult4}.

For parts (1) and (2): It is clear from the multiplication in Corollary~\ref{cor:multiplication in ETn} 
 that $E_1(4)<E_2(4)<E_3(4)<\{\varepsilon\}$ is a chain of right-zero semigroups. That $E_1(4)$ is the minimal ideal can be seen by consideration of ranks. Notice that if $\alpha=\sigma^g\in E_7(4)$, then $p_g = \id$ and $\alpha\sigma^h = \sigma^h$, which means that elements of $E_7(4)$ are left identities for elements of $D(4)$, and hence in particular that $E_7(4)$ is also a right-zero semigroup. To complete the proof it suffices to verify that $E_7(4)E_k(4) \subseteq E_k(4)$ and $E_k(4)E_7(4) \subseteq E_k(4)$ for all $k\leq 3$, which follows immediately from Lemma~\ref{lem:mult4}.

For part (3): It is clear from the multiplication in Corollary~\ref{cor:multiplication in ETn} and Lemma~\ref{lem:mult4} that $D(4)$ and $\EndTn[4]\priv D(4)$  are subsemigroups of $\EndTn[4]$. Elements of $D(4)$ cannot be in the same $\GL$- or $\GR$-class as elements of $\EndTn[4]\priv D(4)$ since they do not have the same rank.
Moreover, from the multiplication one finds that two elements of $\EndTn[4]\priv D(4)$ will be $\GL$ (respectively, $\GR$) related in $\EndTn[4]$ if and only if they are $\GL$ (respectively, $\GR$) related in $\EndTn[4]\priv D(4)$, and one can describe the classes here in a similar manner to the case $n \neq 4$. Thus it suffices to describe how $D(4)$ splits into $\GL$ and $\GR$ classes. Since $\im \sigma^g = \set{t^g\colon t\in\Sn[4], 4t = 4}\cup\set{c_{4g}}$, it follows that $\im\sigma^g = \im\sigma^h$ if and only if $4g = 4h$. Thus $\alpha \GL \beta$ if and only if $\alpha = \beta$ or $\alpha, \beta \in \AutTn[4]$ or $\alpha=\sigma^g$, $\beta=\sigma^h$ both lie in $D(4)$ with $4g=4h$. Also, for all $g,h\in\Sn[4]$ we have that $p_g^{-1}h, p_h^{-1}g\in\Sn[4]$ with $\sigma^g\sigma^{p_g^{-1}h} = \sigma^h$ and $\sigma^h\sigma^{p_h^{-1}g} = \sigma^g$ which shows that $D(4)$ is an $\GR$-class. This yields that $\alpha \GR \beta$ if and only if $\alpha, \beta \in E_k(4)$ for some $1\leq k\leq 3$ or $\alpha, \beta \in \AutTn[4]$ or $\alpha,\beta\in D(4)$ or $\alpha \AutTn[4] = \beta \AutTn[4]$. Since $\GL\subseteq \GR$ we obtain the result.

For part (4): From the multiplication in Lemma~\ref{lem:mult4} we see that elements of $E_7(4)$ are left identities of $\EndTn[4]\priv \AutTn[4]$, but they are not left identities for elements of $\AutTn[4]$. Likewise, elements of $E_3(4)$ are left identities of $\EndTn[4]\priv \left(\AutTn[4]\cup D(4)\right)$, but they are not left identities for elements of $\AutTn[4]\cup D(4)$. 
It follows from this that $D(4)$ is an $\GRt$-class, and (since $\GR \subseteq \GRs \subseteq \GRt$) an $\GRs$-class. For the remaining elements, similar reasoning to that used in the proofs of Lemma~\ref{lem:G is GRs-class} and Proposition~\ref{prop:GRs and GRt rel} applies to show that the remaining classes follow the same pattern as before.    
\end{proof}

We recall from the discussion above that $\EndTn[3]$  is not regular, and so it makes sense to consider the extended Green's relations. 
\begin{prop}
In $\EndTn[3]$ the following statements hold:
\begin{enumerate}
\item $\GLs \subseteq \GRs = \GDs=\GJs = \GRt$ and the $\GRs$-classes are $\AutTn[3]$, $E_3(3)$, $E_2(3)\cup C(3)$ and $E_1(3)$ (i.e. elements of the same rank); 
\item $\alpha \GLt \beta$ if and only if $\alpha=\beta$  or $\alpha, \beta \in \AutTn[3] \cup C(3)$;
    \item $\alpha \GLs \beta$ if and only if $\alpha, \beta \in \AutTn[3]$ or $\alpha^2=\alpha = \beta=\beta^2$ or $\alpha = \pte, \beta = \puf$ both lie in $C(3)$ with $\Fxe[t,e] = \Fxe[u,f]$;
    \item the $\GDt$-classes are $\AutTn[3] \cup E_2(3)\cup C(3)$, $E_3(3)$ and $E_1(3)$;
    \item the $\GJt$-classes are $\EndTn[3]\priv E_1(3)$ and $E_1(3)$.
\end{enumerate} 
\end{prop}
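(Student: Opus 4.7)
The plan is to adapt the proofs of Section~\ref{sec:extended Greens} to the case $n=3$. The crucial structural observation is that although $A(3)$ and $B(3)$ are empty (a direct check via Lemma~\ref{lem:counting}), the monoid $\EndTn[3]$ still contains an idempotent of each of the five types (group, odd, even, non-permutation, trivial), and the witness $u=(2\,3)$, $f=c_1$ from the proof of Lemma~\ref{lem:equiv same type and products equal} produces four distinct maps $\puf$, $\pucf$, $\pee[f]$, $\ptctc[u]$ in $\EndTn[3]$. Consequently the analogues of Lemmas~\ref{lem:equiv same type and products equal},~\ref{lem:GRs depends only on idpt},~\ref{lem:left identities} and~\ref{lem:pairs of idpt satisfying left-cancellation} (as well as Lemma~\ref{lem:what fix does}) transfer to $\EndTn[3]$ with unchanged proofs.

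For part (1), these lemmas combine exactly as in Proposition~\ref{prop:GRs and GRt rel} to give $\GRs=\GRt$ with classes the maximal sets of elements sharing a rank. Once (3) is established we will have that each $\GLs$-class lies in a single rank level, so $\GLs\subseteq\GRs$ and hence $\GDs=\GRs$; and a direct check that each of $E_1(3)$, $E_1(3)\cup E_2(3)\cup C(3)$, $\EndTn[3]\priv\AutTn[3]$ and $\EndTn[3]$ is an ideal saturated by $\GLs$ and $\GRs$ yields $\GJs=\GRs$. For (3), the proof of Proposition~\ref{prop:GLs rel} transfers: the only non-regular elements lie in $C(3)$ and all share non-permutation type, so for $\alpha=\pte,\beta=\puf\in C(3)$ agreement on singular multipliers is automatic by Lemma~\ref{lem:equiv same type and products equal}, while agreement on automorphism multipliers reduces via Lemma~\ref{lem:what fix does} to $\Fxte=\Fxe[u,f]$; idempotents form singleton $\GLs$-classes because by (2) they form singleton $\GLt$-classes and $\GLs\subseteq\GLt$; and a pair of distinct singular idempotents separates $\AutTn[3]$ from $C(3)$, exactly as in Proposition~\ref{prop:GLs rel}.

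For part (2), the main task and the principal obstacle, is to tabulate the idempotent right identities of every element via Corollary~\ref{cor:multiplication in ETn}. A direct computation gives $\psi_g\eta=\eta$ for every singular idempotent $\eta$, so $\psi_g\eta=\psi_g$ forces $\eta=\psid$ when $\psi_g\in\AutTn[3]$; and for $\alpha=\pte\in C(3)$ any singular idempotent $\eta=\puf$ gives $\alpha\eta=\pee[f]$ of rank $1$, so again only $\psid$ is a right identity. Hence $\AutTn[3]\cup C(3)$ is a single $\GLt$-class. For each non-identity idempotent $\eta$, a case analysis by type (treating $\phid$, $E_3(3)$, $E_2(3)$, and $E_1(3)\priv\{\phid\}$ separately, using the appropriate case of Corollary~\ref{cor:multiplication in ETn}) shows that the second coordinate of $\eta$ can be recovered from its set of idempotent right identities, so distinct non-identity idempotents yield distinct right-identity sets and so distinct $\GLt$-classes.

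For part (4), combining (2) with the $\GRt$-classes from (1): the $\GDt$-class of $\psid$ is $\GLt$-connected to $\AutTn[3]\cup C(3)$, then extends to $E_2(3)$ via the $\GRt$-class $E_2(3)\cup C(3)$; no further extension is possible since each element of $E_2(3)$ is a $\GLt$-singleton, giving the class $\AutTn[3]\cup E_2(3)\cup C(3)$. The remaining $\GRt$-classes $E_3(3)$ and $E_1(3)$ consist entirely of $\GLt$-singletons and so each form their own $\GDt$-class. For part (5), $E_1(3)$ is an ideal that is $\GLt$-and-$\GRt$-saturated (it is a whole $\GRt$-class whose elements are $\GLt$-singletons), so $\widetilde{J}(\alpha)=E_1(3)$ for $\alpha\in E_1(3)$. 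For any $\alpha\notin E_1(3)$, the principal ideal $\EndTn[3]\alpha\EndTn[3]$ either already contains $\AutTn[3]$ (if $\alpha\in\AutTn[3]$) or meets $C(3)$; in the latter case $\GLt$-saturation drags $\AutTn[3]$ into $\widetilde{J}(\alpha)$ by (2), and then ideal closure under $\psid$ forces $\widetilde{J}(\alpha)=\EndTn[3]$.
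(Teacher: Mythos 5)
Your proposal is correct and follows essentially the same route as the paper: transfer the Section~\ref{sec:extended Greens} arguments to $n=3$ (valid since $\EndTn[3]$ contains an idempotent of each type), then patch the two places where $A(3)=B(3)=\emptyset$ breaks them --- deducing $\GDs=\GRs$ from $\GLs\subseteq\GRs$ exactly as the paper does, and recovering the $\GDt$- and $\GJt$-classes by direct connectivity and saturated-ideal arguments. The only cosmetic difference is in part (2), where you recompute the idempotent right-identity sets of the singular idempotents by hand, whereas the paper invokes the general fact that $\GLt$ coincides with $\GL$ on regular elements together with the triviality of $\GL$ outside $\AutTn[3]$; both yield the same singleton classes.
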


\begin{proof} 
Since $\EndTn[3]$ contains an idempotent of each type as defined in Section \ref{sec:type}, almost all of the arguments given in Section \ref{sec:extended Greens} to go through verbatim. 
The only two notable differences concern the relations $\GDs$ and $\GDt$.
Indeed for $\GDs$ the proof of Proposition~\ref{prop:GDs and GJs rel} utilises an element of $B(n)$ to deduce that certain elements are $\GDs$-related, but since $B(3) = \emptyset$, this argument is not valid for $n=3$. The result however follows directly from the observation that $\GLs\subseteq \GRs$ so that $\GDs= \GRs$, while the proof that $\GDs = \GJs$ is just as before. 
In a similar manner in the case of $\GDt$, the proof of Proposition~\ref{prop:GDt and GJt rel} relies on the fact that elements of $A(n)$, $B(n)$ and $C(n)$ are $\GLt$-related in order to show that elements of $E_2(n)$, $E_3(n)$ and $\AutTn$ are $\GDt$-related. Since $A(3)=B(3)=\emptyset$ this argument does not hold for $n=3$. However, it is easy to see that elements of $E_3(3)$ form a single $\GDt$-class, while elements of $\AutTn[3]$ and $E_2(3)$ are $\GLtcGRt$-related. Thus the classes of $\GDt$ are as given in the statement.
Finally $\GDt\subseteq \GJt$, and since classes of $\GJt$ are ideals saturated by $\GLt$ and $\GRt$, it follows that the only two classes are $E_1(3)$ and $\EndTn[3]\priv E_1(3)$.
\end{proof}

Likewise, $\EndTn[4]$ is not regular, and the extended Green's relations can be described as follows:

\begin{prop}
In the endomorphism monoid $\EndTn[4]$, the following statements hold: 
\begin{enumerate}
\item $\alpha \GL \beta$ if and only if $\alpha = \beta$ or $\alpha, \beta \in \AutTn[4]$ or $\alpha=\sigma^g$, $\beta=\sigma^h$ both lie in $D(4)$ with $4g=4h$;
\item $\alpha \GR \beta$ if and only if $\alpha, \beta \in E_k(4)$ for some $1\leq k\leq 3$ or $\alpha, \beta \in \AutTn[4]$ or $\alpha,\beta\in D(4)$ or $\alpha \AutTn[4] = \beta \AutTn[4]$;
\item $\alpha \GLt \beta$ if and only if $\alpha=\beta$  or $\alpha, \beta \in \AutTn[4] \cup B(4) \cup C(4)$ or $\alpha=\sigma^g$, $\beta=\sigma^h$ both lie in $D(4)$ with $4g=4h$;
    \item $\alpha \GLs \beta$ if and only if $\alpha, \beta \in \AutTn[4]$ or $\alpha^2=\alpha = \beta=\beta^2$ or $\alpha=\sigma^g$, $\beta=\sigma^h$ both lie in $D(4)$ with $4g=4h$ or $\alpha = \pte, \beta = \puf$ both lie in $B(4)\cup C(4)$ with $\Fxe[t,e] = \Fxe[u,f]$;
    \item $\GDs =\GJs$ and the $\GDs$-classes are $\AutTn[4], D(4), E_3(4) \cup B(4) \cup E_2(4) \cup C(4)$ and $E_1(4)$;
    \item the $\GDt$-classes are $D(4)$, $E_1(4)$ and $\EndTn[4] \priv \left(D(4)\cup E_1(4)\right)$;
    \item the $\GJt$-classes are $\EndTn[4] \priv E_1(4)$ and $E_1(4)$. 
\end{enumerate} 
\end{prop}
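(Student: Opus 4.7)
The strategy is to adapt the arguments from Section~\ref{sec:extended Greens} (developed for $n \geq 5$) and the proof of Proposition~\ref{prop:common} (which already treated several parts of the $n=4$ case), handling the extra rank-$7$ endomorphisms in $D(4)$ separately via the multiplication rules in Lemma~\ref{lem:mult4}. Parts (1) and (2), describing $\GL$ and $\GR$, are essentially already established inside the proof of Proposition~\ref{prop:common}(3), so I focus on the remaining parts.

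The main new input, used for both (3) and (4), is the determination of idempotent right identities in $D(4)$. Since $\sigma^h$ is idempotent iff $h \in \Klein$ (equivalently $p_h = \id$), the identity $\sigma^g \sigma^h = \sigma^{p_g h}$ forces the candidate $h = p_g^{-1} g$. This element lies in $\Klein$ (as $p_g, g$ share the coset $\Klein g$) and, using the sharp transitivity of $\Klein$ on $\{1,2,3,4\}$ combined with $4 p_g^{-1} g = 4g$, it is the unique element of $\Klein$ sending $4$ to $4g$. Any idempotent of rank $\leq 3$ strictly reduces rank upon right multiplication of $\sigma^g$, so cannot be a right identity. Thus $\sigma^g$ admits exactly two idempotent right identities, namely $\psid$ and $\sigma^{p_g^{-1}g}$, yielding $\sigma^g \GLt \sigma^h$ iff $4g=4h$. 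For elements outside $D(4)$, the proofs of Propositions~\ref{prop:GLt rel} and~\ref{prop:GLs rel} transfer verbatim (noting $A(4)=\emptyset$). The $\GLs$-relation inside $D(4)$ is then handled using the formulae $\sigma^g \psi_k = \sigma^{gk}$ and $\sigma^g \sigma^l = \sigma^{p_g l}$: one sees $\sigma^g\psi_k = \sigma^g\sigma^l$ iff $l = p_g^{-1}gk$, and consistency with the same equation for $\sigma^h$ again gives $4g=4h$. Cross-$\GLs$-relations between $D(4)$ and the other classes are excluded by multiplying by rank-distinguishing test elements as in Proposition~\ref{prop:GLs rel}.

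For (5), combining (4) with Proposition~\ref{prop:common}(4) (where $\GRs$-classes coincide with same-rank sets) isolates $\AutTn[4], D(4)$ and $E_1(4)$ as individual $\GDs$-classes, while the rank-$2$ and rank-$3$ non-regular strata fuse via explicit elements $\beta = \pte[t,f] \in B(4)$ and $\gamma = \pte[u,f] \in C(4)$ with $\Fxe[t,f]=\Fxe[u,f]$, constructed in analogy with Example~\ref{ex:nonemptyparts} but on $\{1,2,3,4\}$. The identification $\GDs=\GJs$ then follows by checking that the principal $\GRs,\GLs$-saturated ideal of each non-minimal non-automorphism (including elements of $D(4)$) is exactly one of the three candidates $\EndTn[4]\priv \AutTn[4]$, $E_3(4)\cup B(4)\cup E_2(4)\cup C(4)\cup E_1(4)$, or $E_1(4)$ as indicated by $\GDs$. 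For (6), combining (3) with Proposition~\ref{prop:common}(4) yields the three $\GDt$-classes: $D(4)$ and $E_1(4)$ are self-contained, whereas the common $\GLt$-class $\AutTn[4]\cup B(4)\cup C(4)$ links $\AutTn[4]$ with ranks $2$ and $3$, and same-rank $\GRt$ then absorbs $E_3(4)$ and $E_2(4)$. For (7), $E_1(4)$ is the minimal ideal and its own $\GJt$-class; for any $\beta \notin E_1(4)$, the principal two-sided ideal of $\beta$ already contains some element of $B(4)\cup C(4)$ (or $\AutTn[4]$ itself, if $\beta$ is an automorphism), whose $\GLt$-class contains $\AutTn[4]$, and once $\AutTn[4]$ enters the saturation, ideal closure fills all of $\EndTn[4]$. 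The principal obstacle is the $\GLs$-calculation on $D(4)$ in part (4), where three distinct multiplication formulae from Lemma~\ref{lem:mult4} must be reconciled; the crucial reconciliation is the identity $p_g^{-1}g \in \Klein$ together with its characterisation by the value $4g$, which is the organising principle throughout the $D(4)$ analysis.
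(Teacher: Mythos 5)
Your proposal is correct and follows essentially the same route as the paper's proof: parts (1)--(2) are delegated to Proposition~\ref{prop:common}, the $D(4)$ analysis runs through the unique idempotent right identity $\sigma^{p_g^{-1}g}$ with $p_g^{-1}g\in\Klein$ and $4p_g^{-1}g=4g$ (exactly the paper's condition $4f=4g$ for $\sigma^f\in E_7(4)$), part (5) fuses $B(4)$ and $C(4)$ via the same restricted-to-$\{1,2,3,4\}$ example with $\Fxe[t,f]=\Fxe[u,f]$, and parts (6)--(7) use the same $\GRt$/$\GLt$-class combinatorics and $\sim$-saturation arguments. The only cosmetic deviations are that for $\GLs$ on $D(4)$ the paper simply sandwiches $\GL\subseteq\GLs\subseteq\GLt$ once $\GLt$ is known (so your direct computation with test pairs $(\psi_k,\sigma^l)$, which as written only yields necessity of $4g=4h$, is rendered unnecessary by part (1)), and your claim that the $n\geq 5$ proofs ``transfer verbatim'' outside $D(4)$ silently absorbs one easy check the paper makes explicit, namely that if $\alpha\gamma=\alpha\delta$ for $\gamma\in D(4)$ and $\alpha\in B(4)\cup C(4)$ then $\beta\gamma=\beta\delta$ for all $\beta\in B(4)\cup C(4)$, so that right multiplication by rank-$7$ elements does not refine the $\GLs$-classes inside $B(4)\cup C(4)$.
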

\begin{proof} The proof of 
parts (1) and (2) is contained in that of Proposition~
\ref{prop:common}. 
For parts (3) and (4): Much of the proof from the previous section regarding $\GLt$ applies here, with the additional note that if $\alpha\in B(4)\cup C(4)$ and $\eta\in E_7(4)$, then $\alpha\eta\in E_1(4)$ so that the only right identity for elements in $B(4)\cup C(4)$ is $\psid$.
Now if $\alpha = \sigma^g\in D(4)$ and $\eta = \sigma^f\in E_7(4)$ are such that $\alpha\eta = \alpha$, it follows that $4g = 4f$. Thus $\sigma^g\GLt\sigma^h$ forces $4g=4h$ and thus $\sigma^g\GL\sigma^h$. This shows that for $\alpha, \beta\in D(4)$ we have $\alpha\GL\beta$ if and only if $\alpha\GLt\beta$ if and only if  $\alpha\GLs\beta$ (since $\GL \subseteq \GLs \subseteq \GLt$, and the description of $\GL$ has been given in the proof of Proposition \ref{prop:common}).
The proof that the remaining classes are as stated follows the reasoning of Proposition~\ref{prop:GLs rel}, noting that this is unaffected by the presence of elements in $D(4)$ and the absence of elements in $A(4)$. Indeed the presence of elements in $D(4)$ could only separate classes listed in Proposition~\ref{prop:GLs rel}, but these are either already $\GL$-classes, or they are classes in $B(4)\cup C(4)$. However, it is easy to see that if for some $\gamma\in D(4)$, $\delta\in \EndTn$ and $\alpha\in B(4)\cup C(4)$ one has $\alpha\gamma = \alpha\delta$ then $\beta\gamma=\beta\delta$ for all $\beta\in B(4)\cup C(4)$.
For part (5): The proof of $\GDs$ and $\GJs$ is exactly the same as in Proposition~\ref{prop:GDs and GJs rel} by considering the restrictions of elements $t,u$ and $f$ to the set $\set{1,2,3,4}$, that is, by using the elements
\begin{gather*}
t = \pmap{1 & 2 & 3 & 4 \\1 & 3 & 2 & 1}, \quad 
u = \pmap{1 & 2 & 3 & 4 \\1 & 1 & 1 & 4}, \quad \text{and} \quad  
f = \pmap{1 & 2 & 3 & 4 \\1 & 1 & 1 & 1},
\end{gather*}
since we then have that $\pte[t,f]\in B(4)$ and $\pte[u,f]\in C(4)$ are $\GLs$-related.

For parts (6) and (7): We have that $D(4)$ is a $\GDt$-class on its own since it is a single $\GRt$-class as well as the union of the $\GLt$-classes of its elements, and the other classes as given in the statement follow from the proof of Proposition~\ref{prop:GDt and GJt rel}.
However, since $\GDt\subseteq \GJt$ and the principal ideal generated by elements of $D(4)$ is not $\sim$-saturated, this forces elements of $\EndTn[4]\priv E_1(4)$ to form a single $\GJt$-class as required.
\end{proof}

\section{Related problems}\label{sec:related} As mentioned in the Introduction, this article is an opener in the discussion of the structure of endomorphism monoids of full transformation semigroups of various natural kinds. We present here a number of questions and possible further directions worthy of investigation.

\subsection*{Minimal presentations and counting problems} The presentation constructed in Section \ref{sec:gens} uses a minimal generating set, whose size has a nice combinatorial interpretation in terms of directed coloured graphs of certain types. Using the machinery of generating functions, it seems possible that one could give (recurrence) formulae for $r_3(n)$ (the number of orbits of rank $3$) and $r_2(n)$ (the number of essential orbits of rank $2$), and hence give a formula for the size $m(n)$ of a minimal generating set of $\EndTn$. A natural question in this regard is how $m(n)$ grows asymptotically, in particular as compared with the size of the endomorphism monoid $\EndTn$ itself. We note that some similar counting problems (concerning solutions to the system of equations $X^2 =X$, $Y^2 = Y$, $XY = YX$ in $\TXn$) have been considered in \cite{krattenthaler}. 

Whilst our generating set is minimal, we make no claim, however, as to the minimality of our \emph{presentation}. Indeed, the number of relations of the form (R2) and (R3) and the length of the words involved in these relation is dependent upon the chosen presentation $\langle X_\Pi: R_\Pi \rangle$ of the symmetric group $\Sn$. Specifically, it is dependent upon  $F_\Pi(n)$, that is, the smallest number 
 such that any element of $\mathcal{S}_n$ has a representation as a word of length at most $F_{\Pi}(n)$  in the generators $X_{\Pi}$.

\subsection*{Other questions concerning presentations}  We have considered a presentation for the monoid $\mathcal{E}_n$, based on a minimal set of generators. If we allowed ourselves to chose a generator from each orbit, then our relations would naturally simplify. Previous articles have considered presentations for the singular part of $\mathcal{T}_n$ \cite{east:2010,east:2013}. We could equally well consider (now semigroup) generators and presentations for $\singn$. 

\subsection*{Other semigroups of transformations}We have focussed on $\EndTn=\EndA[\TXn]$ as $\TXn$ is perhaps the most natural semigroup of \emph{finite} transformations.The automorphisms and endomorphisms of $\mathcal{PT}_n$ and $\mathcal{I}_n$ have already been described by different authors (see \cite[Chap. 7]{GM08}), where $\mathcal{PT}_n$ and $\mathcal{I}_n$ are, respectively, the
semigroups of partial (partial one-one) maps of a finite set, under composition of (partial) maps.
What can be said of the structure of $\EndA[{\mathcal{PT}_n}]$, $\EndA[{\mathcal{I}_n}]$ or $\EndA[{\mathcal{S}_n}]$? 

Moving away from transformation semigroups, one could consider related semigroups, such as Brauer monoids and partition monoids (see, for example, \cite{east:partition}), where here the endomorphisms have been determined \cite{mazorchuk}.

We have started with an unordered set
$\{ 1,\ldots,n\}$ and defined $\TXn$ to be all maps from this set to itself. There are ordered analogues of
$\TXn$ obtained by considering order preserving maps of
$\{ 1,\ldots,n\}$. The study of their endomorphisms has  begun in \cite{LF23a,LF23b}, and it would be interesting to develop these ideas further to describe the structure of the endomorphism monoids.

In another direction, one can drop the constraint of finiteness. Specifically, what of the endomorphism monoid of the full transformation monoid (and related semigroups) on an \emph{infinite} set, such as $\mathbb{N}$?

\subsection*{Semigroup vs. monoid endomophisms}  For any monoid $S$ we may consider the monoid of monoid endomorphisms (that is, semigroup endomorphisms also fixing the identity) or the monoid of semigroup endomorphisms. Here we have considered the latter for $\mathcal{T}_n$, but it would also be possible to adapt our results to consider the former.

\subsection*{Endomorphisms of endomorphism monoids of other algebras} We may regard regard $\TXn$ as the monoid of endomorphisms of an algebra with no operations. This is a degenerate case of a free algebra. One could consider endomorphisms of other endomorphism monoids of free algebra, perhaps starting with their automorphisms. A useful reference here is \cite{araujo}.

\subsection*{Iteration} The structure of $\mathcal{T}_n$ is quite rich: it forms a chain of $\mathcal{D}=\GJ$-classes, which are non-trivial, and it has elements of every rank.
It is, of course, the semigroup of endomorphisms of an algebra with no operations. By contrast, the structure of $\EndTn$ is rather thin - for example,
$\GL$ is trivial outside of the group of units. Will this make the structure of $\End(\EndTn)$ richer? In general, what can one say about the sequence of monoids
\[\mathcal{T}_n, \; \End(\mathcal{T}_n), \; \End(\End(\mathcal{T}_n)), \ldots ?\]

\end{document}